\theoremstyle{plain}
\newtheorem{theorem}{Theorem}[section]
\newtheorem{lemma}[theorem]{Lemma}
\newtheorem{cor}[theorem]{Corollary}
\newtheorem{proposition}[theorem]{Proposition}
\newtheorem*{question}{Question}
\newtheorem*{themain}{Theorem \ref{t:main}}
\newtheorem*{ther0concsurg}{Theorem \ref{t:0concsurg}}
\newtheorem*{theriso1}{Theorem \ref{t:iso1}}
\newtheorem*{theriso2}{Theorems \ref{t:infiso} and \ref{t:iso3}}
\newtheorem*{ther0iso}{Theorem \ref{t:0conciso}}
\theoremstyle{definition}
\theoremstyle{remark}
\newtheorem{remark}{Remark}
\def \- {\!\smallsetminus\!}
\def \h {\widetilde{h}}
\begin{document}

\baselineskip.5cm
\title {Surfaces in 4-manifolds: Concordance, Isotopy, and Surgery}
\author[Nathan S. Sunukjian]{Nathan S. Sunukjian}
\curraddr{Max Planck Institute for Mathematics, Bonn, Germany}
\address{Department of Mathematics, Stony Brook University \newline
\hspace*{.375in}Stony Brook, New York 11794}
\email{\rm{nsunukjian@math.sunysb.edu}}
\begin{abstract}
In this paper we will show that two surfaces of the same genus and homology class in a simply connected 4-manifold are concordant. We will show they are often topologically isotopic when their complements have cyclic fundamental group. Finally, we will show that if they are 0-concordant, then surgery on one is equivalent to surgery on the other. 

 \end{abstract}
\maketitle

\section{introduction}

Understanding the smoothly embedded surfaces in a 4-manifold is intimately related to understanding the smooth structures on a 4-manifold. This is exhibited in, for example, the following two situations. First, any $h \in H_2(X)$ can represent by a smoothly embedded surface in $X$, but the minimum genus of such a surface often depends on the particular smooth structure on $X$ \cite{KM}. Second, understanding surfaces is important, because all smooth structures can be constructed via surgery along smooth structures. Specifically, Wall has shown in \cite{W} that every smooth structure on $X$ can be constructed via a series of spherical surgeries in $X\# nS^2\times S^2$. Similarly, Iwase has shown that every smooth structure on $X$ can be obtained via a series of log transforms on tori, and Inanc Baykur and the author have given an independent proof in much the same spirit as Wall (see \cite{Iw} and \cite{BSlog}).

Unfortunately, understanding all of the embedded surfaces in a 4-manifold is an intractable problem. Not only is it already hard to classify embedded surfaces up to topological isotopy, but even within a given topological isotopy class there can be many distinct smooth embeddings. Fintushel and Stern have exhibited infinite families of surfaces which are topologically isotopic but smoothly distinct (\cite{FSsurf}, see Ruberman-Kim \cite{RK1} for variations of this as well). However, to construct every smooth structure on a 4-manifold it \emph{not} is necessary to consider surgery on every possible element of this great myriad of surfaces. Very often surgery on one surface results in the same smooth structure as surgery on a different surface.
 
The motivation for this paper is to partially make sense of this phenomenon by studying surfaces up to the equivalence of concordance. Two surfaces $\Sigma_0$ and $\Sigma_1$ embedded in a 4-manifold $X$ are called concordant if there is an embedded 3-manifold $\Sigma \times I \longrightarrow X\times I$ such that $\partial (\Sigma\times I) = \Sigma_0 \times \{0\} \cup \Sigma_1 \times \{1\}$.

We will prove the following.
\begin{themain}
Let X be a simply connected manifold. Then two smoothly (resp. locally flat) embedded surfaces  $\Sigma_0$ and $\Sigma_1$ are smoothly (resp. locally flat) concordant if and only if they have the same genus and are in the same homology class.
\end{themain}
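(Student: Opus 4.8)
The plan is to establish the nontrivial (``if'') implication by constructing the concordance directly, as an embedded product $3$--submanifold of $X\times I$. The reverse implication is immediate: a concordance is by definition a product $\Sigma_g\times I$, so its two ends have the same genus, and since $\partial[\Sigma_g\times I]=[\Sigma_1\times\{1\}]-[\Sigma_0\times\{0\}]$ dies in $H_2(X\times I)\cong H_2(X)$, the surfaces are homologous. So let $\Sigma_0,\Sigma_1$ have common genus $g$ and common class $h\in H_2(X)$. First I would produce \emph{some} properly embedded, connected, orientable $3$--manifold $W\subset X\times I$ with $\partial W=(\Sigma_0\times\{0\})\cup(\Sigma_1\times\{1\})$: since $h$ is carried by an embedded surface it is Poincar\'e dual to $c_1(L)$ for a complex line bundle $L\to X$, and $\Sigma_i$ is the transverse zero set of a section of $L$; pulling $L$ back to $X\times I$ and extending the two boundary sections generically over the interior, the zero set of the extension is such a $W$ (concisely: homologous codimension--$2$ submanifolds cobound). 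Tubing components along arcs makes $W$ connected, and it is orientable because $X$ is. Viewed as a cobordism from $\Sigma_0$ to $\Sigma_1$, after handle trading it has a handle decomposition with only $1$-- and $2$--handles, equally many of each by an Euler--characteristic count.

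The heart of the argument is to straighten $W$ to a product by ambient surgery inside the simply connected $5$--manifold $X\times I$. Given an embedded circle $\gamma\subset W$ along which I want to surger, $\gamma$ bounds a map of a disk (since $\pi_1(X\times I)=1$); a generic such disk is embedded (because $2\cdot 2<5$) and meets $W$ only in isolated interior points (because $2+3=5$), and these intersections are eliminated by the Norman trick --- tubing the disk along arcs in $W$ out to its boundary --- the self--intersections thereby created being eliminated by the Whitney trick, which is valid since $X\times I$ is $5$--dimensional and simply connected. A compatible surgery framing exists because the obstructions all live in the (trivial) reduced cohomology of a disk. Surgering along a judiciously chosen finite family of circles --- including, when required, null--homologous ones surgered with a nontrivial framing so as to \emph{enlarge} $H_1(W)$ --- and bookkeeping with Poincar\'e--Lefschetz duality, I would arrange first that $W$ is a homology product and then that both inclusions $\Sigma_i\hookrightarrow W$ induce $\pi_1$--isomorphisms, i.e.\ that $W$ is an h--cobordism.

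Finally, an h--cobordism $W^3$ between the aspherical surfaces $\Sigma_g$, or for $g=0$ a simply connected cobordism between two $2$--spheres, is forced to be the product: $\Sigma_g$ is then incompressible in $W$, so $W$ is Haken and, being homotopy equivalent rel boundary to $\Sigma_g\times I$, is homeomorphic, hence (smoothly) diffeomorphic, to it by Waldhausen's theorem, while for $g=0$ one gets $S^2\times I$ from $3$--manifold classification. Such a $W$ is exactly a concordance from $\Sigma_0$ to $\Sigma_1$. The locally flat case is entirely parallel, using topological transversality and the availability of the embedded--disk constructions and of surgery in the simply connected topological $5$--manifold $X\times I$.

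The step I expect to be the true obstacle is the second one. It is \emph{not} the case that an arbitrary cobordism between genus--$g$ surfaces can be straightened by handle moves alone --- for example, attaching a $1$--handle to $\Sigma_0$ and then a $2$--handle along a curve that also runs through $\Sigma_0$ yields a $W$ with $H_1(\Sigma_0)\to H_1(W)$ not injective --- so the family of ambient surgeries in the second step has to be chosen with genuine care: controlling framings, and in particular killing any kernel of $\pi_1(\Sigma_0)\to\pi_1(W)$ that would leave $\partial W$ compressible. By contrast the first and third steps are essentially formal once $\pi_1(X)=1$ is in hand.
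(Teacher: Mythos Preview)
Your overall architecture matches the paper's: produce a 3-manifold $W \subset X\times I$ with $\partial W = \Sigma_0 \sqcup \Sigma_1$, then perform ambient Dehn surgery on $W$ until it becomes $\Sigma\times I$. Your first step is identical (Pontryagin--Thom), and your disk-finding via general position plus the Norman trick plays the same role as the paper's preliminary ambient 1-surgeries that make $\pi_1(X\times I - W)$ cyclic.

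The genuine gap is the framing step. Your sentence ``a compatible surgery framing exists because the obstructions all live in the (trivial) reduced cohomology of a disk'' establishes only that \emph{some} ambient 2-surgery along a given $\Delta$ can be performed; it does not let you choose which integral Dehn surgery you get on $W$. Concretely, the framings of $N_W\gamma$ that extend to a 2-plane subbundle of the trivial rank-3 bundle $N_{X\times I}\Delta$ form a single coset of $2\mathbb{Z}$ in $\mathbb{Z}$: the obstruction lies in $\pi_1(V_2(\mathbb{R}^3)) \cong \mathbb{Z}_2$, not in $\tilde{H}^*(\Delta)$. You explicitly want to use specific framings (``null-homologous ones surgered with a nontrivial framing''), and you have not shown those lie in the available coset. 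The paper spends all of Section~\ref{s:surgery2} on exactly this point: when $w_2$ vanishes on $H_2(X\times I - W;\mathbb{Z})$, it builds a spin structure on $W$ for which the realizable ambient surgeries are precisely the spin surgeries, and then invokes the relative version of $\Omega_3^{\text{spin}}=0$ (Lemma~\ref{l:partialspinsurgery}) to see that spin surgeries suffice to reach $\Sigma\times I$; when $w_2$ is nonzero on some class, it represents that class by an embedded sphere (using $H_2$ of a cyclic group vanishes) and tubes $\Delta$ into it to flip the parity, so that every framing becomes available.

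A secondary difference worth noting: your endgame is to surger to an h-cobordism and then recognize it as $\Sigma_g\times I$ via Waldhausen (and the Poincar\'e conjecture when $g=0$). The paper bypasses this entirely --- once the spin bookkeeping is in place, it surgers \emph{directly} to $\Sigma\times I$, since any two spin 3-manifolds agreeing on the spin boundary are related by spin surgeries. Besides avoiding the heavy recognition theorems, this sidesteps the issue your last paragraph flags, namely whether one can ambiently restore injectivity of $\pi_1(\Sigma_0)\to\pi_1(W)$ once it has been lost; the paper never needs $W$ to be an h-cobordism at any intermediate stage.
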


We will apply our study of concordance to two questions, one smooth, and the other topological.

The smooth question we investigate is the following: Consider two concordant surfaces. Is the set of manifolds obtained by surgery on one of the surfaces the same as the set of manifolds obtained by surgery on the other? We define surgery along a surface as the process of removing a neighborhood of the surface and replacing it with something else.\footnote{To be even more precise, define surgery along $\Sigma_0 \subset X$ as $X'= (X - \nu \Sigma_0) \cup_f M$ for some 4-manifold $M$ and a diffeomorphism $f:\partial M \longrightarrow \partial \nu \Sigma_0$.} In general, surgeries on different surfaces result in different manifolds, but in many cases it depends on the particular type of surgery. Concordance would seem to be a natural context in which to investigate this question: By doing (surgery$\times I$) along a concordance, we get a cobordism between surgery on one surface and surgery on another. For certain kinds of surgery, this is actually an h-cobordism. The failure of the h-cobordism prevents our us from concluding that surgeries on concordant surfaces must give diffeomorphic manifolds.

Nevertheless, for certain types of concordance, we can say more. We call two genus-$g$ surfaces \emph{0-concordant} if there is a concordance $Y \subset X\times I$ such that the regular level sets $Y\cap (X\times \{ t\})$ consist of a surface of genus-$g$ and a disjoint collection of $S^2$'s. We will prove that the set of manifolds that arise via surgeries on 0-concordant surfaces are the same.

\begin{ther0concsurg}

If (surgery$\times I$) along a 0-concordance in $X\times I$ results in a cobordism between simply connected 4-manifolds, then the cobordism is trivial, and the manifolds are diffeomorphic.

\end{ther0concsurg}

The notion of 0-concordance was defined for spheres in Paul Melvin's thesis, where he shows that 0-concordant 2-knots in $S^4$ have diffeomorphic Gluck twists. Our theorem is a generalization of this, and we will further explain how it can be applied in Section \ref{s:applications}.

The topological question we address is the following: When does the homology class and genus specify a surface up to isotopy? Based on the examples in \cite{FSsurf}, one could conjecture that such simple criteria are never sufficient in the smooth category. Our focus here will be to understand when two surfaces are \emph{topologically} isotopic.

\begin{theriso1}
 If $\Sigma_0$ and $\Sigma_1$ are locally flat embedded surfaces surfaces of the same genus and homology class such that $\pi_1(X - \Sigma_i)= 0$, then the two surfaces are topologically isotopic.
\end{theriso1} 

This has been known in the case that the $\Sigma_i$ are spheres for some time. See for example \cite{LWsurf}.  The condition that the complement of the surface must be simply connected is somewhat restrictive. It implies that the homology class of the surfaces must be primitive. If the homology class is not primitive, then the simplest possible fundamental group is a cyclic group (whose order is the degree of the homology class). In this case we obtain: 

\begin{theriso2}
Let $\Sigma_0$ and $\Sigma_1$ be locally flat embedded surfaces of the same genus and homology class in a simply connected 4-manifold $X$. The surfaces are topologically isotopic when 
\begin{itemize}
\item $\pi_1(X - \Sigma_i) = \mathbb{Z}$ and $b_2 \geq |\sigma| + 6$ or,
\item $\pi_1(X - \Sigma_i) = \mathbb{Z}_n$, $b_2 > |\sigma| + 2$, and the genus of $\Sigma_0$ is strictly greater than the minimal genus for such a surface in its homology class (minimal, that is, among surfaces with $\pi_1(X - \Sigma) = \mathbb{Z}_n$.)

\end{itemize}

\end{theriso2}

We prove this in Section \ref{s:isotopy}, where we will also state a formula from \cite{LWsurf} which specifies the minimal genus.

\begin{ther0iso} 
Let $\Sigma_0$ and $\Sigma_1$ be smoothly embedded, 0-concordant surfaces in a 4-manifold $X$. If $\pi_1(X - \Sigma_i)= 0$, then $(X,\Sigma_0)$ is diffeomorphic to $(X,\Sigma_1)$.
\end{ther0iso}

We will also prove counterparts for all of the above theorems in the non-simply connected case, but the conditions will be much greater, so we will defer them until later.

We will prove Theorem \ref{t:main}, on which all of our subsequent theorems are based, in Section \ref{s:concordance}. Before that, this paper contains two  technical sections concerning surgery on codimension-2 embeddings. Specifically, Section \ref{s:surgery1} will be particularly concerned with keeping track of fundamental groups, and Section \ref{s:surgery2} will focus on the case of ambient Dehn surgery on 3-manifolds in a 5-manifold (with special emphasis on the importance of spin structures). We will construct concordances through a series of these kinds of ambient surgeries. With this outline in mind, it is possible to skip straight to Section \ref{s:concordance}, the core of the paper, and refer back to the technical details as necessary.

\section{Examples}\label{s:applications}

In some sense the most basic class of surfaces in 4-manifolds are the 2-knots in $S^4$. We begin this section by recalling what is known about concordance, isotopy, and surgery involving 2-knots, and follow that up by explaining how the work of this paper generalizes this to surfaces in other simply connected 4-manifolds.

\begin{itemize}
\item Concordance: Unlike the case for classical knots in $S^3$, all 2-knots were shown by Kervaire to be concordant to the unknot, \cite{Ker}. We give a short proof of this in Section \ref{ss:KerSlice}.  
\item Isotopy: Freedman has shown that if $\pi_1(S^4-K) = 1$, then $K$ is topologically the unknot. In general, 2 knots are determined neither by the fundamental group, (\cite{suc}), nor by $\pi_2$ thought of as a left $\mathbb{Z}[\pi_1]$ module, (\cite{Plotsuc}). It is unknown whether the homotopy type of $S^4-K$ determines $S^4-K$, even for ribbon knots.
\item Surgery: There are two natural surgeries on a 2-knot. The first is spherical surgery, where we replace the neighborhood of the knot, $S^2\times D^2$ with $S^1\times D^3$. Concordant spherical surgeries are often distinct: Surgery on knots with distinct fundamental group, for example. The second is Gluck twist, which we will concentrate on.  

\end{itemize}

\subsection{Gluck twists}
Let $S^2= K \hookrightarrow X^4$ be a knotted embedding with trivial normal bundle. The Gluck twist along this $S^2$ is defined as $X_K = X - \nu S^2 \cup_f S^2\times D^2$.\footnote{The map $f:S^1 \times S^2\longrightarrow S^1\times S^2$ is defined by $(\theta,x) \mapsto (\theta,h_{\theta}x)$ where $\h_{\theta}$ is a rotation of $S^2$ through its poles of angle $\theta$, but this is unimportant for the purposes of this paper. This is the only possible choice of $f$ that can possibly yield an $X_K$ smoothly distinct from $X$.} One can show by hand that a Gluck twist on the unknot gives back $S^4$. It is unknown whether a Gluck twist on a knot in $S^4$ ever gives something other than $S^4$, and the situation is understood only for certain classes of knots:

If $K$ is a 2-knot in $S^4$, then a Gluck twist on $K$ gives back $S^4$ when $K$ is:
\begin{enumerate}

\item a ribbon knot
\item a spun knot (\cite{Gl})
\item a twist-spun knot (\cite{G},\cite{pao})
\item a band sum of a link whose components are either ribbon or twist spun \cite{GlBook}.
\item 0-concordant to the unknot \cite{M}
\end{enumerate}

Our Theorem \ref{t:0concsurg} gives a new proof of (5), which implies (1), (2) and, (4), but not (3)\footnote{For completeness we will outline a proof that Gluck twists on twist-spun knots are trivial: The resulting manifold is a homotopy 4-sphere admitting an effective $S^1$ action. By a result of Pao \cite{pao}, any such manifold must be diffeomorphic to $S^4$.}. One might ask how big each of these classes are. All spun knots are ribbon knots, while  that not all knots are ribbon knots (see e.g. \cite{Rub},\cite{Yaj1}, \cite {Yaj2}, \cite{Coc}). In fact, these papers demonstrate that there are twist-spun knots that are not ribbon knot. Cochran's twist-spun knots in \cite{Coc} are, however, 0-concordant to the unknot.\footnote{There are a few other knots that don't obviously fall in to the categories above, that nevertheless yield back $S^4$ after Gluck twisting. Examples include \cite{GCS1},\cite{GCS2},\cite{GCS3},\cite{AkCS},\cite{NS}. Akbulut and Yasui have recently studied Gluck twists on homologically essential spheres in \cite{AY}, and shown that in many cases they do not change the smooth structure. On the other hand, Akbulut has examples of Gluck twists in non-orientable manifold that change the manifold without changing the simple homotopy type, (\cite{Afake}).} Melvin asks the following:

\begin{question}
Are all 2-knots in $S^4$ 0-concordant to the unknot?
\end{question} 
An affirmative answer to this question would imply that the Gluck twist is a useless operation for constructing exotic $S^4$'s.

\subsection{Other Surgeries}

In this paper ``surgery'' will mean the process of removing the neighborhood of a surface (or configuration of surfaces), and replacing it with something else. In addition to spherical modifications and Gluck twists, we know of four more types of surface surgeries that have been useful in the study of 4-manifolds.
\begin{enumerate}
\item Logarithmic transformation: Cut out the neighborhood of a torus $T$ that has $[T]^2=0$ and re-glue it in a different way. 
\item Knot surgery: Cut out the neighborhood of a torus $T$ that has $[T]^2=0$, and replace it with $S^1\times (S^3 -  \nu K)$ for some knot $K\subset S^3$ (in many cases the particular gluing will not matter). 

\item Rational blowdown: Replace certain plumbings of 2-spheres with a rational 4-ball. 

\item Fiber Sums: Take two manifolds $X_1$ and $X_2$ possessing Lefschetz fibrations of the same genus, and glue them together along complements of the fiber, $X = X_1 - \nu(f_1) \cup X_2 - \nu(f_2)$.

\end{enumerate}

In general these surgeries change the homotopy type of the manifold, although in certain circumstances, for example if the complement of $T$ is simply connected, they will not chance the homeomorphisms type, but only change the diffeomorphism type. Let us give a few examples of concordant surfaces, and spell out explicitly what Theorem \ref{t:0concsurg} is telling us with respect to some of the surgeries above. One easy way to construct a new embedded surface from and old one is by connect summing with a 2-knot. That is, given a surface $\Sigma \subset X$ and a 2-knot $K \subset S^4$, we can form the surface $\Sigma_K$ as $(X,\Sigma_K) = (X,\Sigma) \# (S^4,K)$. Theorem \ref{t:main} tells us that $\Sigma$ and $\Sigma_K$ are concordant. Moreover, if $K$ is a ribbon 2-knot, or more generally a knot which is 0-concordant to the unknot, then $\Sigma_K$ will be 0-concordant to $\Sigma$, and hence the surgeries on $\Sigma$ are equivalent to the surgeries that can be done on $\Sigma_K$ by Theorem \ref{t:0concsurg}. In the case that $\Sigma$ is a torus, this means that if $X'$ is the result of a long transform on $\Sigma$, then there is a log transform on $\Sigma_K$ that also gives $X'$. However, if the complement of $\Sigma$ is simply connected, then $\Sigma$ and $\Sigma_K$ are smoothly equivalent to begin with by Theorem \ref{t:0conciso}.

While it is unknown whether all 2-knots in $S^4$ are 0-concordant to the unknot, we can ask a similar question about surfaces in a general 4-manifold. Are all concordant surfaces actually 0-concordant? The answer is no. In \cite{FSsurf}, Fintushel and Stern construct examples of surfaces which are topologically isotopic, have simply connected complement, but are not smoothly equivalent. By Theorem \ref{t:main}, they must be (smoothly) concordant, but if they were 0-concordant, they would be smoothly equivalent by Theorem \ref{t:0conciso}. On the other hand, this does not mean that non-trivial surgeries on smoothly distinct surfaces always result in non-diffeomorphic manifolds.


\section{Notation and Handlebodies}\label{notation}

Let $Y$ be an embedded submanifold in $X$. The tangent bundle of $X$ will be denoted $TX$ and the normal bundle of $Y$ in $X$ will be denoted $N_X Y$. A tubular neighborhood of $Y$ in $X$ will be denoted $\nu Y$ and often we will confuse this neighborhood with $N_X Y$. The restriction of $TX$ to $Y$ will be written as $TX|_Y$, while the restriction to the entire $n$-skeleton of $X$ will be written $TX|_n$.

\subsection {Handlebodies} Let $X$ be an $n$-manifold with boundary and divide the components of the boundary $\partial X$ into two sets, $\partial_-X$ and $\partial_+X$. By a relative handlebody decomposition of $(X,\partial_-X)$ we will mean a diffeomorphism 

\[  X = (\partial_-X \times I) + h_{a_1} + \ldots + h_{a_l}  \]

where $h^{a_i}$ is an $a_i$-handle and attaching is denoted additively. Handles are not necessarily attached in from lowest index to highest, although by standard transversaltiy results, we can rearrange their attaching to be thus. Such a handle decomposition is always induced from a Morse function which takes its minimum on $\partial_-X$ and its maximum on $\partial_+X$ which moreover has no critical points on a collar of $\partial_-X$. 

Very often in this paper we will need to consider handle decompositions on cobordisms between manifolds with boundary. These can be thought of as manifolds with corners. In general it will be simplest to think of such cobordisms as being defined by a handlebody decomposition   

\[  M = (X\times I) + h_{a_1} + \ldots + h_{a_l}  \] 

Where $X$ is a manifold with boundary, and the handles are attached to the the interior of $X\times \{1\}$. We will use the notation $\partial_-M$ and $\partial_+M$ for the manifolds representing the two ends of the cobordism (which may have boundary themselves). The key example from this paper is $X\times I -\nu Y$ where $Y$ is some concordance from $\Sigma_0$ to $\Sigma_1$ (which will always be a codimension-2 embedding). We can construct such a relative handlebody decomposition on the concordance complement by taking a Morse function that has its minimum on $X\times \{0\} - \nu \Sigma_0$, and has no critical points on a collar of $X\times \{0\} - \nu \Sigma_0$. 

In fact, if we know something about the embedding of $Y$, we can be even more explicit about how such a relative handlebody is constructed. In particular, the projection $X\times I \longrightarrow I$ induces (after a small perturbation of $Y$), a Morse function and therefore relative handle structure on $(Y,\Sigma_0)$. It is explained in \cite{GS} how such a handle decomposition on $Y$ induces a dual handle decomposition on $X\times I - \nu{Y}$. In particular, every $m$-handle in $Y$ contributes an $(m+1)$-handle to $X\times I - \nu{Y}$.

Since the fundamental group will be a constant consideration for us, let us explicitly describe what the dual decomposition of $X\times I-Y$ looks like when $Y$ has a 1-handle. For concreteness, let the induced relative handle structure on $(Y,\partial_-Y)$ in $X\times I$ consist of a single 1-handle, hence inducing a dual relative handle-structure on $X\times I - \nu Y$ as a single 2-handle attached to $(X - \nu \Sigma_0)\times I$. If the 1-handle of $Y$ is attached to $\partial_-Y \times I$ at point $a$ and $b$ in  $\partial_-Y \times \{1\}$, then the 2-handle in the dual decomposition is attached along meridians of $a$ and $b$ that have been band-summed together along the path of the 1-handle. This is described in detail in \cite{GS}. See Figure \ref{f:dual1handle}.

	\begin{figure}
\labellist
\small\hair 2pt
\pinlabel {a 1-handle in $Y$} at 162 13
\pinlabel {attaching sphere of 2-handle} at 316 133
\endlabellist	
\includegraphics{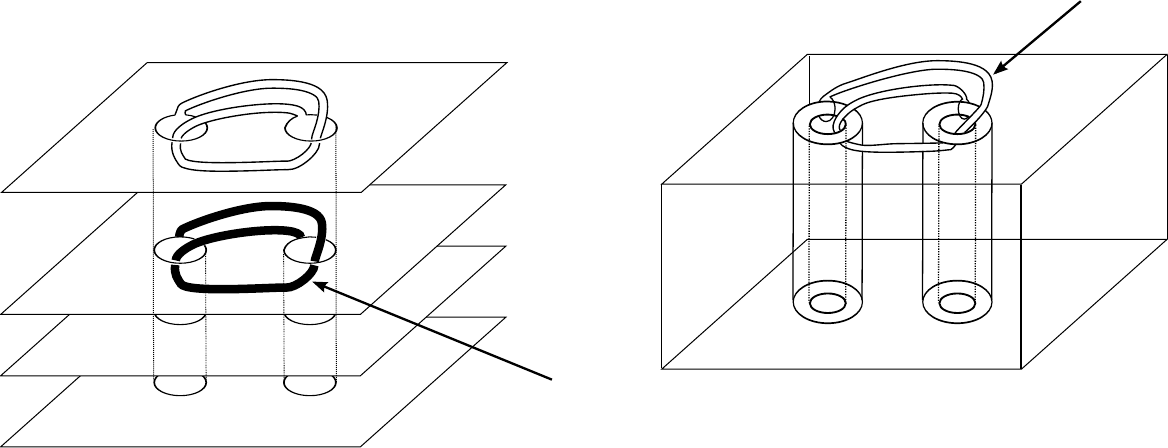}
	\caption{The dual handle decomposition on $X\times I - Y$ when $Y$ has a 1-handle}
	\label{f:dual1handle}
	\end{figure}

\subsection{Topological manifolds} Essentially all of the arguments in this paper also work in the topological category (except for those dealing with 0-concordance, which are strictly smooth results). In most cases in this paper, the quickest route to this fact is to note that a punctured 4-manifold always possesses a smooth structure (\cite{FQ}). Therefore we can make smooth arguments in the complement of a point. As an example, we can define a spin structure on a topological 4-manifold as being a spin structure in the complement of a point. Using this methodology, it will rarely be necessary to spell out in detail why a given proof works in the topological setting.

\section{Ambient surgery}\label{s:surgery1}

Most of the results in this paper rely on being able to modify codimension-2 embeddings through the process of ambient surgery, which we will now describe.

Given $Y^{n-2} \subset M^n$ a proper codimension-2 embedding, and let $\Delta^m$ be an embedded $m$-disk in $X$ which intersects $Y$ only along the boundary, that is $\Delta \cap Y = \partial \Delta$. Thicken $\Delta$ to an $n$-dimensional $m$-handle $h$ attached to $Y$. We can modify $Y$ by removing the attaching region of this $m$-handle, and replacing it by the complement of the attaching region in the boundary of $h$ (this is sometimes called the belt region, or the ``label''). See Figure \ref{f:ambientsurgery}. Call this surgered embedding $Y'$.

	\begin{figure}
\labellist
\small\hair 2pt
\pinlabel $\Delta$ at 11 97
\pinlabel $Y$ at 85 23
\pinlabel $Y'$ at 348 23
\endlabellist	
\includegraphics{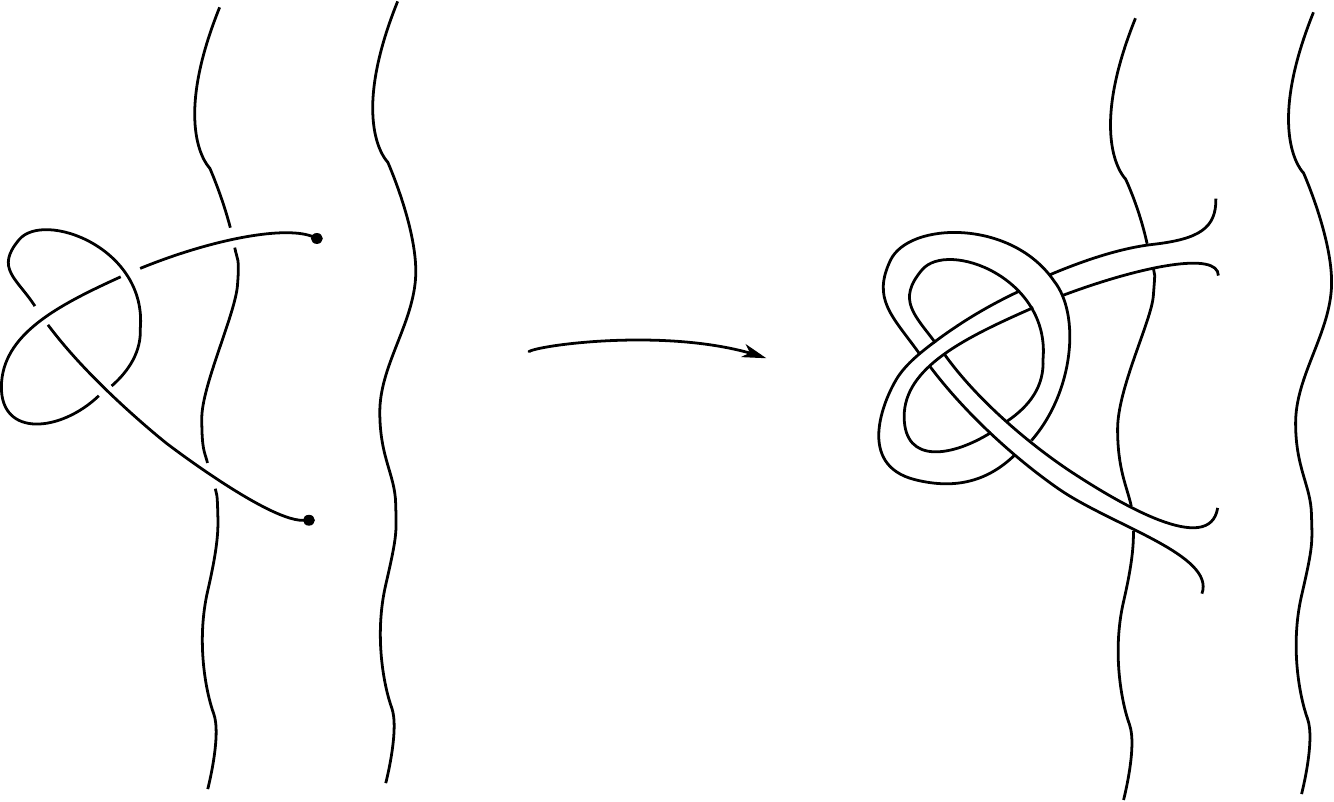}
	\caption{Ambient surgery along $\Delta$}
	\label{f:ambientsurgery}
	\end{figure}

Here are two examples of this. When $\Delta^1$ is one dimensional (an interval connecting two points of $Y$), the process of ambient surgery is just a self connect sum. In the case that $Y$ is 3-dimensional, $Y' = Y \# S^1\times S^2$.  When $\Delta^2$ is 2-dimensional, this kind of ambient surgery modifies a 3-dimensional $Y$ by Dehn surgery. This will be explored in more detail in the next section, where we determine what kinds of Dehn surgeries can be performed along a given $\Delta^2$.

\subsection{How ambient surgery affects the fundamental group}

 We frequently will want to know how $\pi_1(M - Y)$ is related to $\pi_1(M - Y')$, that is, how the fundamental group of the complement changes under ambient surgery. To do this, we'll view ambient surgery as a bordism from $Y$ to $Y'$ in $M\times I$ as follows. Begin with a codimension 2-embedding $Y\subset M$ and an $m$-disk $\Delta$ in $M$ along which we wish to perform ambient surgery. We will construct a $n-1$ manifold $\mathcal{Y}$ in $M\times I$ such that $\partial \mathcal{Y} = Y \times \{0 \}\sqcup Y' \times \{1\}$. The manifold $\mathcal{Y}$ is defined as $Y\times I \cup h \times \{1\}$ in $M\times I$ where $h$ is the $m$-handle that determines the surgery. See Figure \ref{f:ambientsurgerybordism}.

	\begin{figure}
\labellist
\small\hair 2pt
\pinlabel {$Y$} at 193 65
\pinlabel {$Y'$} at 193 253
\pinlabel {$h_m$} at 257 211
\pinlabel {$\mathcal{Y}$} at 433 161
\pinlabel {$M \times \{ 0\}$} at 12 25
\pinlabel {$M \times \{ 1\}$} at 12 219
\endlabellist	
\includegraphics[width=\textwidth]{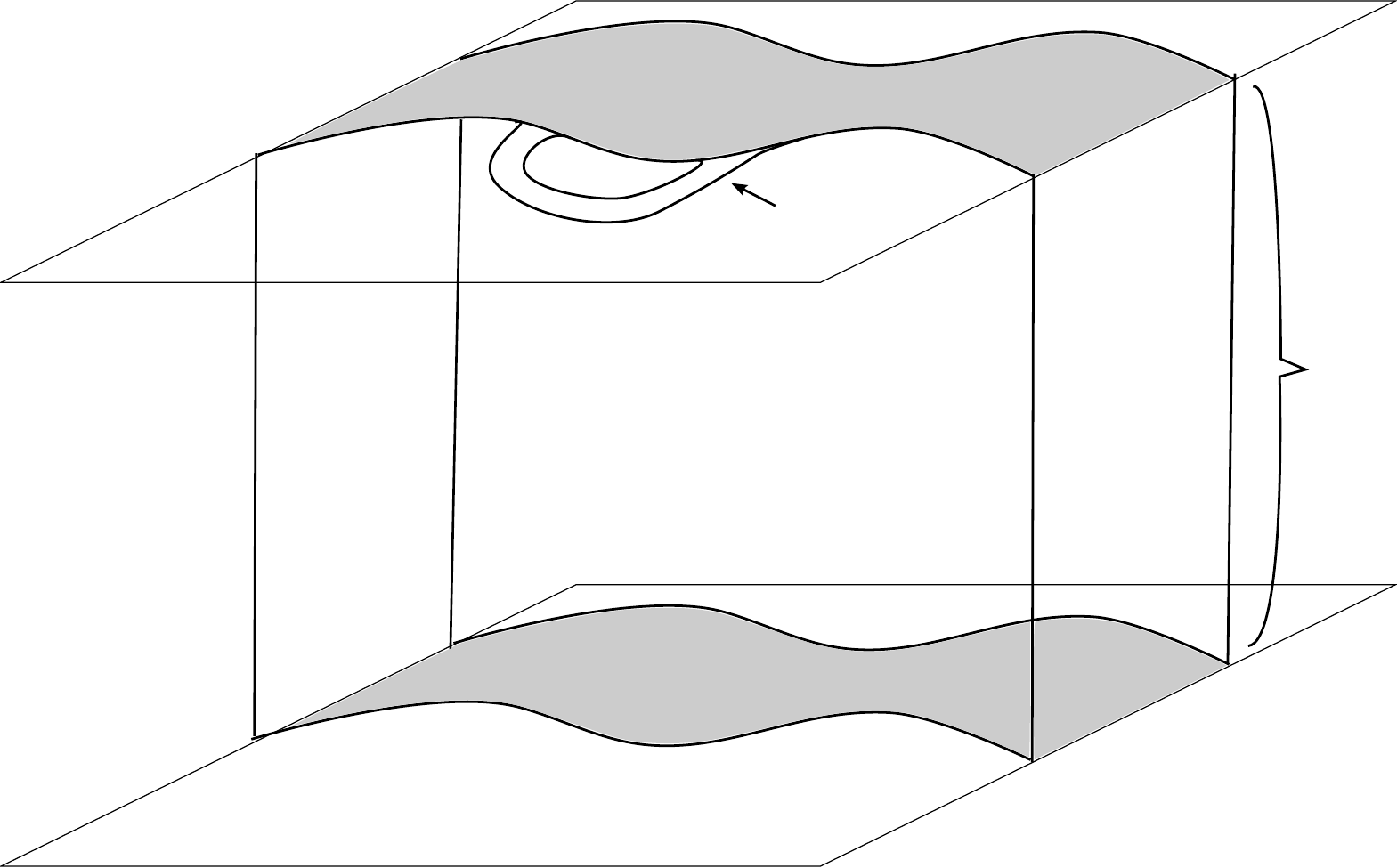}
	\caption{Viewing ambient surgery as a bordism}
	\label{f:ambientsurgerybordism}
	\end{figure}

To determine the fundamental group of $M - Y'$, we'll construct a handle decomposition of $M\times I - \nu \mathcal{Y}$. Following the construction in Section \ref{notation}, a handle structure on $\mathcal{Y}$ induces a dual handle structure on $M\times I - \nu \mathcal{Y}$. This relative handle structure induced on $\mathcal{Y}$ is the evidentially just $Y\times I \cup h_m$. See Figure \ref{f:ambientsurgerybordism}. That is, ambient $m$-surgery corresponds to a handle decomposition on $M\times I - \nu\mathcal{Y}$ as

\[ M\times I - \nu\mathcal{Y} = (M - \nu Y)\times I + h_{m+1} \]
where $h_{m+1}$ is an $(n+1)$-dimensional $(m+1)$-handle.

\begin{proposition}\label{p:pi1surgery2}

Let $Y^3 \subset M^5$ be a proper embedding. Then ambient 2-surgery (ambient Dehn surgery), does not change the fundamental group. That is, $\pi_1(M - Y)$ is isomorphic to $\pi_1(M - Y')$.\footnote{More generally, the same proof applies to the case that $Y^{n-2} \subset M^n$ is a proper codimension 2 embedding, and we perform ambient i-surgery for $2\leq i \leq n-3$.}
\end{proposition}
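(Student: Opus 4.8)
The plan is to read the statement off the handle-theoretic picture just established and then ``turn the handlebody upside down.'' Recall that ambient $2$-surgery from $Y$ to $Y'$ is realized by the properly embedded cobordism $\mathcal{Y}\subset M\times I$ with $\partial\mathcal{Y}=Y\times\{0\}\sqcup Y'\times\{1\}$, and that the complement $W:=M\times I-\nu\mathcal{Y}$ is a $6$-dimensional cobordism with $\partial_-W=M-\nu Y$ at the bottom and $\partial_+W=M-\nu Y'$ at the top. The displayed formula above (the case $m=2$, so $m+1=3$) says $W=(M-\nu Y)\times I + h_3$: a single $3$-handle attached to the product $(M-\nu Y)\times I$ along an embedded $S^2\times D^3$.

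First I would note that attaching a handle of index $\geq 3$ does not change the fundamental group: up to homotopy it only cones off a sphere of dimension $\geq 2$, which affects $\pi_k$ only for $k\geq 2$. Applied to the formula above, this gives $\pi_1(M-Y)\cong\pi_1\big((M-\nu Y)\times I\big)\cong\pi_1(W)$.

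Next I would turn the handle decomposition of $W$ upside down. In a cobordism of dimension $6$ the dual of a $3$-handle is again a $3$-handle, now attached to $\partial_+W\times I=(M-\nu Y')\times I$; thus $W=(M-\nu Y')\times I + h_3'$. The same $\pi_1$ observation, read from the other end, gives $\pi_1(W)\cong\pi_1(M-Y')$. Composing the two isomorphisms through $\pi_1(W)$ yields $\pi_1(M-Y)\cong\pi_1(M-Y')$, as claimed. The footnote's range $2\leq i\leq n-3$ falls out of exactly this argument: for ambient $i$-surgery on $Y^{n-2}\subset M^n$ the complement handle has index $i+1$ and its dual has index $n-i$, and the $\pi_1$ step applies at both ends precisely when both are $\geq 3$.

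There is no essential obstacle here — the work is all in the handle calculus developed on the preceding pages. The one point deserving a moment's care is verifying that $\partial_+W$ really is $M-\nu Y'$: after smoothing corners, $\mathcal{Y}$ is collared by $Y'$ near $M\times\{1\}$, which is the content of Figure \ref{f:ambientsurgerybordism}. When $M$ (hence $Y$) has nonempty boundary one should also keep track of the vertical boundary of $W$ (along $\partial M\times I$ and along the tube $\partial\nu\mathcal{Y}$), but since all handles are attached away from it, the upside-down argument is unaffected.
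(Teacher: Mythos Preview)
Your proof is correct and follows essentially the same line as the paper's: both use the handle decomposition $W=(M-\nu Y)\times I + h_3$ to conclude $\pi_1(W)\cong\pi_1(M-Y)$, then turn the cobordism upside down to get $W=(M-\nu Y')\times I + h_3'$ and hence $\pi_1(W)\cong\pi_1(M-Y')$. Your additional remarks justifying the footnote range and checking the identification of $\partial_+W$ are nice touches but do not alter the approach.
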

\begin{proof}

We have seen that ambient 2-surgery corresponds to a handlebody decomposition  
\[ M\times I - \nu \mathcal{Y} = (M-\nu Y)\times I + h_{3} \] where $\partial_-(M\times I - \nu \mathcal{Y}) = M - \nu Y$. Therefore, we get an isomorphism between $\pi_1(M\times I - \nu \mathcal{Y})$ and $\pi_1(M - \nu Y)$.

Turning this construction upside down, we apply the same proof to \[ M\times I - \nu \mathcal{Y} = (M - \nu Y')\times I + h'_{3} \] to see that $\pi_1(M\times I - \nu \mathcal{Y})$ is isomorphic to  $\pi_1(M - \nu Y')$
\end{proof}

The purpose of performing ambient 1-surgeries will be to set ourselves up to perform ambient 2-surgeries. The following tells us when ambient 2-surgeries are possible.

\begin{lemma}\label{l:2surgispos} Let $Y^{n-2}$ be a proper codimension-2 embedding in $M^n$ $(n \geq 5)$ whose complement has cyclic fundamental group generated by the meridian to $Y$. Then any loop in $Y$ bounds an embedded disk in $M$ that only intersects $Y$ along the boundary.\end{lemma}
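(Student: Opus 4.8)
The plan is to fill $\gamma$ by pushing it off $Y$ along the normal directions, capping the pushed-off loop with an embedded disk in the complement of $Y$ (possible once it is arranged to be null-homotopic there, because $n\ge 5$), and then re-attaching a ``radial'' annulus inside $\nu Y$. First I would reduce to the case that $\gamma$ is an embedded circle in the interior of $Y$; since $\dim Y=n-2\ge 3$ this is harmless. As $\gamma\cong S^1$, the restriction $N_MY|_\gamma$ is a trivial $D^2$-bundle (the normal bundle of $Y$ is orientable in all cases we use; in the twisted case the hypotheses force $\pi_1(M-Y)$ to have exponent $2$ and a short separate argument is needed), so fix an identification $\nu Y|_\gamma\cong\gamma\times D^2$ with $Y\cap(\gamma\times D^2)=\gamma\times\{0\}$. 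For each integer $j$ set $A_j=\{(\theta,re^{ij\theta}) : \theta\in S^1,\ r\in[0,1]\}\subset\gamma\times D^2$; this is an embedded annulus meeting $Y$ exactly along $\gamma=\gamma\times\{0\}$, whose outer boundary $\widetilde\gamma_j=\{(\theta,e^{ij\theta})\}$ is a loop on $\partial\nu Y\subset M-Y$.

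Next I would identify the class of $\widetilde\gamma_j$ in $\pi_1(M-Y)$, which we identify with $\pi_1(M-\nu Y)$. Working in $\pi_1(\partial(\nu Y|_\gamma))=\pi_1(T^2)$, the loop $\widetilde\gamma_j$ is $\widetilde\gamma_0$ followed by $j$ copies of the meridian fiber, so $[\widetilde\gamma_j]=[\widetilde\gamma_0]\,\mu^{\,j}$ in $\pi_1(M-Y)$, where $\mu$ denotes the meridian. By hypothesis $\pi_1(M-Y)$ is cyclic generated by $\mu$, hence abelian, so $[\widetilde\gamma_0]=\mu^{\,k}$ for some $k$, and thus $[\widetilde\gamma_j]=\mu^{\,k+j}$. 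Choosing $j$ with $k+j\equiv 0$ (take $j=-k$ when $\pi_1(M-Y)\cong\mathbb{Z}$, and any $j\equiv -k$ mod $n$ when $\pi_1(M-Y)\cong\mathbb{Z}_n$) makes $\widetilde\gamma_j$ null-homotopic in $M-\nu Y$.

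Finally I would cap off. A null-homotopy gives a map $f\colon D^2\to M-\nu Y$ with $f|_{\partial D^2}=\widetilde\gamma_j$; using a collar of the boundary I push $f(\mathrm{int}\,D^2)$ into the interior of $M-\nu Y$, and then, since $n\ge 5=2\dim D^2+1$, a general-position homotopy rel boundary makes $f$ an embedding onto a disk $D$ with $D\cap\partial\nu Y=\partial D=\widetilde\gamma_j$ and $D\cap Y=\emptyset$. Then $\Delta:=A_j\cup_{\widetilde\gamma_j}D$, after smoothing the corner along $\widetilde\gamma_j$, is an embedded disk in $M$: it is embedded because $A_j\subset\nu Y$ while $D\cap\nu Y=\widetilde\gamma_j$, and $\Delta\cap Y=\gamma=\partial\Delta$, which is exactly the conclusion.

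The only real subtlety is the bookkeeping in the second paragraph: checking that the homotopy class of the pushoff is genuinely a power of the meridian, and that twisting the pushoff by fiber loops realizes all powers of $\mu$, so that the obstruction to capping can be killed using only the cyclicity hypothesis. The upgrade to an embedded disk is routine general position, but it is where the dimension restriction $n\ge 5$ is essential.
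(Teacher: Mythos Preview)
Your argument is correct and is exactly the paper's approach, written out in detail: push the loop off $Y$, use cyclicity of $\pi_1(M-Y)$ to twist the push-off by meridians until it becomes null-homotopic, and then invoke general position in dimension $\geq 5$ to cap it with an embedded disk. The paper compresses all of this into two sentences (``any push-off can be modified by adding copies of the meridian to make the push-off null-homotopic''), but the content is the same.
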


\begin{proof} 

It is sufficient to find a push-off of the loop that is null-homotopic in $M - Y$. Since the fundamental group of $M-Y$ is generated by a meridian to $Y$, however, any push-off can be modified by adding copies of the meridian to make the push-off null-homotopic.
\end{proof}

We can achieve this situation as follows. 
\begin{proposition}\label{p:1surgery}
Let $Y^{n-2} \subset M^n$ be a proper codimension-2 embedding where $M$ is simply connected. Then by performing ambient 1-surgeries to $Y$, we can make the fundamental group of the complement a cyclic group. 
\end{proposition}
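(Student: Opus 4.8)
The plan is to start with an arbitrary codimension-2 embedding $Y^{n-2}\subset M^n$ with $M$ simply connected, and kill the fundamental group of the complement down to the cyclic subgroup generated by a meridian by performing a finite sequence of ambient $1$-surgeries. The first observation is that $\pi_1(M-Y)$ is always \emph{normally generated} by the meridian $\mu$ of $Y$: since $M$ is simply connected, van Kampen applied to the decomposition $M = (M-\nu Y)\cup \nu Y$ shows that $\pi_1(M-\nu Y)$ surjects onto $\pi_1(M)=1$ after killing the image of $\pi_1(\partial\nu Y)$, and the image of $\pi_1(\partial\nu Y)\to\pi_1(M-\nu Y)$ is normally generated by $\mu$. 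Hence the quotient $G := \pi_1(M-Y)/\langle\!\langle\mu\rangle\!\rangle$ is trivial, so every element of $\pi_1(M-Y)$ is a product of conjugates of $\mu^{\pm1}$. Moreover, since $n-2\ge 3$, the embedded surgeries we do on $Y$ are in codimension $\ge 3$ inside $Y$ for a $1$-handle, so generators of $\pi_1(Y)$ can be represented by embedded circles, and these circles can be pushed off $Y$ into $M-Y$; this is what gives us candidate curves along which to do ambient $1$-surgery.

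The heart of the argument is the effect of an ambient $1$-surgery on $\pi_1(M-Y)$. By the discussion preceding Proposition \ref{p:pi1surgery2}, an ambient $1$-surgery along an arc $\Delta^1$ corresponds to attaching a single $2$-handle to $(M-\nu Y)\times I$, i.e.\ $M\times I - \nu\mathcal{Y} = (M-\nu Y)\times I + h_2$. Attaching a $2$-handle kills the homotopy class of its attaching circle, so $\pi_1(M-\nu Y')$ is the quotient of $\pi_1(M-\nu Y)$ by the normal closure of the attaching circle. As described in Section \ref{notation} (Figure \ref{f:dual1handle}), the attaching circle of this $2$-handle is the band-sum of the two meridians at the feet of the $1$-handle along the core of the $1$-handle; since both feet lie on $Y$, those two meridians are each conjugate to $\mu$, so the attaching circle represents an element of the form $g\mu^{\pm1}h\mu^{\pm1}g^{-1}$ for suitable $g,h$ — in particular, modulo the normal closure of $\mu$ it is trivial, so these surgeries never enlarge the quotient $G$, and by choosing the arc $\Delta$ appropriately we can arrange the attaching circle to be any prescribed conjugate of $\mu\cdot w$ for a word $w$ we wish to kill. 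Concretely: given a generator $w$ of a finite generating set of $\pi_1(M-Y)$ (which exists, as $M-Y$ is a compact manifold), write $w$ as a product of conjugates of $\mu^{\pm 1}$, push a representing embedded loop off $Y$, band it to $Y$ along an embedded arc realizing one meridian, and do the ambient $1$-surgery; the new $2$-handle kills the class $\mu^{\mp1}w$ (up to conjugacy and up to the relations already present), which is equivalent to killing $w$ modulo $\langle\!\langle\mu\rangle\!\rangle$. Iterating over a finite generating set, we successively kill each generator modulo $\mu$, and after finitely many steps $\pi_1(M-Y^{(k)})$ is generated by $\mu$ alone, hence cyclic.

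The main obstacle I expect is not the group theory but the \emph{embeddedness and framing bookkeeping}: to perform an ambient $1$-surgery I need an embedded disk $\Delta$ (here an arc, thickened to a $1$-handle) meeting $Y$ only along its boundary, and I need to be sure that the band connecting the pushed-off loop to $Y$ can be chosen embedded and disjoint from $Y$ in its interior. Since $n\ge 5$, general position makes an arc in $M-Y$ automatically embedded (arcs have codimension $\ge 4$) and disjoint from $Y$, so this is essentially free; the only real care is in choosing the framing of the $1$-handle so that the surgered $Y'$ is again the correct kind of submanifold (e.g.\ orientable, of controlled genus) — but for the purpose of \emph{this} proposition, where we only claim cyclicity of $\pi_1$ of the complement, the framing is irrelevant and any choice works. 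A secondary subtlety is that after doing one $1$-surgery, $Y$ changes (its genus goes up by one each time for a surface, or $Y$ gets a connect-summed $S^{n-3}\times S^1$ in general), so the generating set of $\pi_1$ must be re-examined at each stage; but since each surgery only adds relations to $\pi_1(M-Y)$ and the meridian of $Y'$ is identified with the meridian of $Y$, the normal-generation-by-$\mu$ property is preserved, and termination is guaranteed by the fact that we process a fixed finite list of generators. This completes the plan.
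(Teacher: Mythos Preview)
Your proposal is correct and follows essentially the same route as the paper: both argue that $\pi_1(M-Y)$ is normally generated by the meridian $\mu$ (the paper states this without the van Kampen justification you supply), that an ambient $1$-surgery attaches a dual $2$-handle along a band-sum of two meridians, and that by choosing the arc $\Delta^1$ appropriately one can identify any conjugate of $\mu$ with $\mu$ itself, leaving a one-generator (hence cyclic) group. The paper's proof is three sentences; your extra discussion of embeddedness, framing, and termination is sound but not needed here, and your aside about ``pushing a loop off $Y$'' is slightly misphrased---the arc $\Delta^1$ has both feet on $Y$ and its interior in $M-Y$, and it is the \emph{path} this arc traces in $M-Y$ (not a loop in $Y$) that determines which conjugate of $\mu$ gets identified with $\mu$.
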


\begin{proof}
Suppose we intend to perform ambient 1-surgery along an arc $\Delta^1$ in X with endpoints $a,b\in Y$. An ambient 1-surgery corresponds to a handle decomposition

\[ M\times I - \nu \mathcal{Y} = (M-\nu Y)\times I + h_{2}. \]

We saw in Section \ref{notation} that $h_2$ is attached along the band sum of meridians of $a$ and $b$ band-summed together along $\Delta^1$.

Notice that $\pi_1(M-Y)$ is generated by conjugates of a meridian to $Y$. Therefore, we can use ambient 1-surgeries to identify all of the generators. Since a 1-generator group is cyclic, we are done.

\end{proof}

This will show up in several spots in our proof of Theorem \ref{t:main}. It allows us to set up to perform ambient 2-surgeries, and it also implies that every homology class in a simply connected 4-manifold can be represented by an embedded surface whose complement has cyclic fundamental group.

\section{Spin structures and surgery}\label{s:surgery2}

In this section we will focus on ambient 2-surgeries. Ambient 1-surgeries can be performed in an essentially unique way across a given arc. In contrast, there are many ambient 2-surgeries that can be performed over any given disk. The purpose of this section is to gain control over which ambient 2-surgeries can be performed using spin structures.

For the purposes of this paper, we will define a \emph{spin structure} on a manifold to be a trivialization of the tangent bundle over the 1-skeleton that can be extended across the 2-skeleton.\footnote{For one and two dimensional manifolds, it is conventional to consider a spin structure as a trivialization of the \emph{stable} tangent bundle over the 1-skeleton that extends over the 2-skeleton.} We can extend this definition to topological 4-manifolds using the fact that every 4-manifold possesses a smooth structure in the complement of a point. Possessing a spin structure is equivalent to $w_2 = 0$, and for simply connected 4-manifolds, to the intersection form being even. Additionally, 3-manifolds always admit spin structures because they are parallelizable. Spin structures are parameterized by $H^1(X,\mathbb{Z}_2)$. For a much more thorough review of spin structures from a similar perspective, see \cite{Kirbybook} or \cite{Scorpbook}.

\subsection{Spin Surgery}
 In this paper, when we speak of 2-surgery, or Dehn-surgery on a 3-manifold, we will always mean integral Dehn surgery, or equivalently, the result of attaching a 4-dimensional 2-handle to $Y^3 \times I$. A surgery is called a \emph{spin surgery} when the spin structure on $Y\times I$ extends across this 2-handle. It is well known that the spin surgeries on $S^3$ are equivalent to attaching 2 handles to $S^3$ with even framing. One can think of a spin surgery as being the generalization of even surgeries to 3-manifolds other than $S^3$. It is a standard fact about 3-manifold that $\Omega^{spin}_3 = 0$, i.e. all 3-manifolds are related by spin surgeries, (see e.g. \cite{Kirbybook}).

\subsection{Ambient spin-surgery}

Ambient 2-surgery is the most subtle sort of surgery we will encounter in this paper. To perform 2-surgery on a 3-dimensional manifold, one must specify a framing for the surgery. The most troublesome question is to understand which framings admit ambient surgeries and which do not. In general, not all framings do, but we will see that with the extra framework which spin structures afford, we can always perform spin surgeries ambiently. In particular, we will see that a spin structure on $X^5$ induces one on $Y^3$, and that ambient 2-surgeries can be performed on $Y$ as long as they agree with this induced spin structure. Specifically, given a spin structure on $Y$, (i.e. a trivialization of the $TY|_1$ that extends over $TY|_2$), and a disk $\Delta \subset X$ along which we wish to perform ambient surgery, we will call an ambient 2-surgery a \emph{spin surgery} if the induced trivialization on $TY|_1 \oplus N_\Delta \partial \Delta$ extends over the 2-handle specifying the surgery. This differs from spin surgery defined above because we the surgery takes place along a 2-handle directly abutting $Y$, rather than as a 2-handle attached to $Y\times I$. The two constructions correspond by thinking of the spin structure on $Y \times I$ as being a product of a spin structure on $Y$ with a line bundle. The role of the line bundle is played in the ambient surgery case by $N_\Delta \partial \Delta$. That is, we do not need to specify the line bundle ambiently on all of $Y$ (there may not be a trivial line bundle in $N_XY$ over all of Y), but only over the curve $\partial\Delta$ where the surgery is performed.

Inducing an appropriately compatible spin structure on $Y$ can be rather delicate, and in general we will need to use ambient spin surgery even in the case that $X$ is not spin.

\begin{proposition}\label{p:3hasspin}
Let $Y^3$ be an oriented 3-manifold embedded in $X^5$ such that $\langle w_2(X-Y), h \rangle = 0$ for all $h\in H_2(X-Y,\mathbb{Z})$. Then there exists a spin structure on $Y$ such that all ambient surgeries are spin surgeries.
\end{proposition}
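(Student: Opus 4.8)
The plan is to reduce the statement to a framing computation, and then to manufacture the desired spin structure on $Y$ by transplanting a spin structure from the complement $X - \nu Y$, which exists precisely because of the hypothesis on $w_2$.

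First I would isolate what must be verified. By the discussion preceding the proposition, an ambient $2$-surgery along a disk $\Delta$ (with $\gamma := \partial\Delta \subset Y$) is carried out by attaching a $4$-dimensional $2$-handle $h_\Delta$ to $Y$ along $\gamma$, with a framing of $\nu(\gamma\subset Y)$ obtained by extending $\nu(\gamma\subset Y)\subset N_X\Delta|_\gamma$ to a rank-$2$ subbundle of $N_X\Delta$ over the $2$-disk $\Delta$ and trivializing it; a spin structure $\mathfrak{s}$ on $Y$ makes this a spin surgery iff $\mathfrak{s}$ extends over the core $\Delta$ of $h_\Delta$, equivalently iff the spin structure $\mathfrak{s}|_\gamma$ induced on $\gamma\cong S^1$ equals the spin structure $\mathfrak{s}_\Delta$ on $\gamma$ determined by that framing. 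So the task is to find one spin structure $\mathfrak{s}$ on $Y$ with $\mathfrak{s}|_\gamma = \mathfrak{s}_\Delta$ for every admissible $\Delta$. Two easy reductions: (i) any two thickenings of $\Delta$ have the same stable normal data over the contractible $\Delta$, so they give framings of $\nu(\gamma\subset Y)$ differing by an element of $\ker(\pi_1 SO(2)\to\pi_1 SO)=2\Z$, hence by an \emph{even} number of twists; thus $\mathfrak{s}_\Delta$ (a mod-$2$ invariant) is well defined, independent of the thickening. (ii) Two spin structures on $Y$ restrict to a pair of circles so as to differ by the pairing of a fixed class in $H^1(Y;\Z_2)$ with the difference of the circles' classes in $H_1(Y;\Z_2)$; so it suffices to show that $\mathfrak{s}_\Delta$ depends on $\gamma$ only through $[\gamma]\in H_1(Y;\Z_2)$ and does so linearly, after which the required $\mathfrak{s}$ is obtained by extending the corresponding homomorphism $H_1(Y;\Z_2)\to\Z_2$ off the subspace spanned by such classes.

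Next I would build the candidate. Write $\nu Y = D(N)$ for the normal disk bundle of $N=N_X Y$ and $S(N)=\partial\nu Y$ for its circle bundle. The hypothesis that $\langle w_2(X-Y),h\rangle = 0$ for all $h\in H_2(X-Y;\Z)$ supplies a spin structure $\mathfrak{t}$ on $X-\nu Y$; restrict it to $S(N)$. As $S(N)\to Y$ is a circle bundle one has $TS(N)=\pi^*TY\oplus\underline{\R}$, and a spin structure on $S(N)$ is pulled back from a unique spin structure on $Y$ exactly when it restricts to each meridian fiber as the bounding spin structure. I would check that $\mathfrak{t}|_{S(N)}$ has this property — when $w_2(N)\ne 0$ the map $\pi^*$ on spin structures is already a bijection and there is nothing to do; when $w_2(N)=0$ one adjusts $\mathfrak{t}$ by a class in $H^1(X-\nu Y;\Z_2)$ if needed — and take $\mathfrak{s}$ to be the resulting spin structure on $Y$. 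Finally, to see $\mathfrak{s}|_\gamma=\mathfrak{s}_\Delta$: pushing $\Delta$ off $Y$ near $\gamma$ in the inward normal direction of $\Delta$ puts $\mathrm{int}\,\Delta$ into $X-\nu Y$ with boundary a push-off $\gamma^+\subset S(N)$ of $\gamma$; since $\Delta$ is contractible, $\mathfrak{t}$ restricts on it to the canonical trivialization of $TX$, and unwinding the definition of $\mathfrak{s}_\Delta$ against this trivialization identifies $\mathfrak{s}_\Delta$ with $\mathfrak{t}|_{S(N)}$ restricted to $\gamma^+$ and carried back to $\gamma$ — i.e.\ with $\mathfrak{s}|_\gamma$, the meridional contribution dropping out precisely because $\mathfrak{t}$ is bounding on meridians.

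I expect the second step to be the main obstacle: showing that $\mathfrak{t}$ can be chosen to restrict to the bounding spin structure on every meridian of $Y$ (equivalently, that $\mathfrak{t}|_{S(N)}$ descends to $Y$). This is where the hypothesis $w_2(X-Y)=0$ really earns its keep — one cannot simply work on $Y$, since $TX|_Y = TY\oplus N$ has $w_2$ equal to $w_2(N)$ and so need not be spin over $Y$ — and the accompanying bookkeeping that matches the framing read off from $\mathfrak{t}$ on the push-off $\gamma^+$ with the intrinsic framing $\mathfrak{s}_\Delta$, keeping track of the vertical/meridional direction, is the delicate point. The remaining ingredients are routine obstruction theory for $SO(k)$-bundles over complexes of dimension at most two.
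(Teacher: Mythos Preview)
Your plan rests on a premise that is not justified by the hypothesis: the condition $\langle w_2(X-Y), h\rangle = 0$ for all $h \in H_2(X-Y;\mathbb{Z})$ does \emph{not} imply that $X-\nu Y$ admits a spin structure, so the $\mathfrak{t}$ on which your whole second step depends need not exist. By universal coefficients, the evaluation map $H^2(X-Y;\mathbb{Z}_2) \to \mathrm{Hom}(H_2(X-Y;\mathbb{Z}),\mathbb{Z}_2)$ has kernel isomorphic to $\mathrm{Ext}(H_1(X-Y;\mathbb{Z}),\mathbb{Z}_2)$; the hypothesis says only that $w_2$ lies in this kernel, not that it vanishes. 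This is not an idle worry in the paper's main application, where ambient $1$-surgery has arranged $\pi_1(X\times I - Y)\cong\mathbb{Z}_n$ and $n$ may well be even. You have in fact slipped into writing ``the hypothesis $w_2(X-Y)=0$'' in your last paragraph, which is a strictly stronger statement. (There is also a secondary worry in your adjustment step: changing $\mathfrak{t}$ by a class in $H^1(X-\nu Y;\mathbb{Z}_2)$ alters its restriction to a meridian only if the meridian is nonzero in $H_1(X-\nu Y;\mathbb{Z}_2)$, which again is not assumed.)

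The paper avoids this entirely by never invoking a global spin structure on the complement. It builds the trivialization of $TY$ directly over each $1$-cycle $\gamma$ from a chosen bounding surface $\Delta$ (together with a line subbundle $\ell\subset N_XY|_\gamma$ and a $2$-plane subbundle $\delta\subset N_X\Delta$), declaring that $TY|_\gamma\oplus\ell$ should extend across $\delta\oplus T\Delta$. Well-definedness and coherence are then checked by capping: any two competing choices glue to a closed \emph{oriented} surface $\Sigma\subset X-Y$, and the discrepancy is detected by $w_2$ of an oriented rank-$4$ subbundle of $TX|_\Sigma$, hence by $\langle w_2(X-Y),[\Sigma]\rangle$ with $[\Sigma]$ an integral class. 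That is exactly the stated hypothesis, applied only to the surfaces one actually produces. Your step-1 reductions (independence of thickening, and reducing to the linearity of $\gamma\mapsto\mathfrak{s}_\Delta$ on $H_1(Y;\mathbb{Z}_2)$) are correct and set up essentially the same verification; the fix is to carry out that verification by this direct capping argument rather than by transplanting a global $\mathfrak{t}$ that may not exist.
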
 

\begin{proof}

We'll first trivialize $TY$ over embedded 1-cycles. Let $\gamma = S^1$ be an embedded 1-cycle in $Y$. There are two cases. First, suppose $\gamma$ has a push-off into $X-Y$ that is null homologous in $X-Y$. Then $\gamma$ bounds an oriented surface $\Delta$ whose interior lies in $X-Y$. This push-off specifies a line bundle $\ell$ as a subbundle of $N_XY|_\gamma$. Choose further a 2-plane subbundle $\delta$ of $N_X\Delta$ that extends $N_Y\gamma$. See Figure \ref{f:spinsurgery}.

	\begin{figure}
\labellist
\small\hair 2pt
\pinlabel $\ell$ at 28 113
\pinlabel $Y$ at 372 154
\pinlabel $\Delta$ at 70 272
\pinlabel {$N_Y \gamma$} at 293 80 
\pinlabel $\delta$ at 294 215
\pinlabel $\gamma$ at 73 58
\endlabellist	
\includegraphics[width=\textwidth]{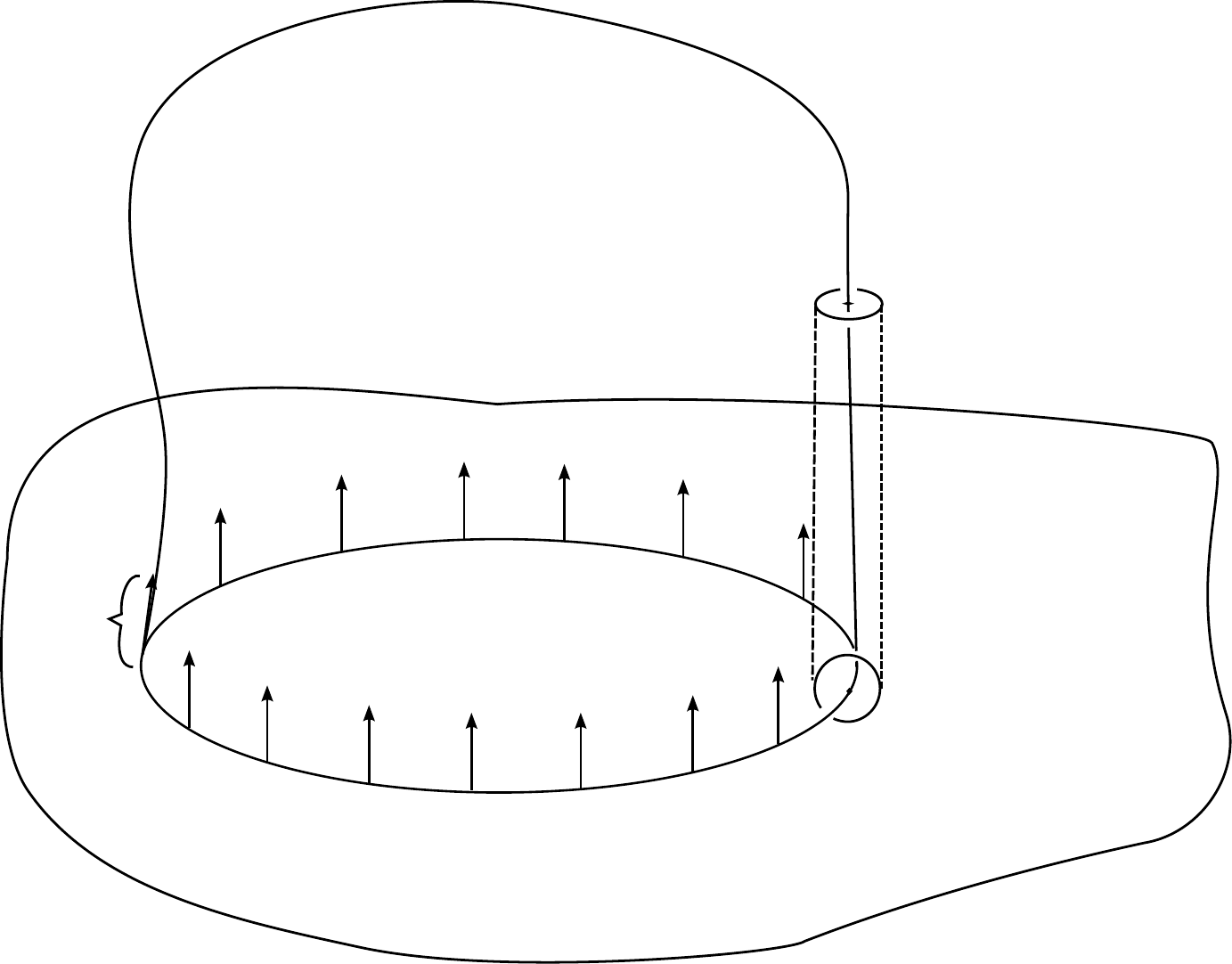}
	\caption{Trivializing $TY|_\gamma$}
	\label{f:spinsurgery}
	\end{figure}

Finally, we can define a trivialization of $TY|_\gamma$ as follows. Trivialize it in such a way that the induced trivialization on $TY|_\gamma \oplus \ell$ extends across $\delta \oplus T\Delta$. (Once we show that these trivializations actually define a spin structure on $Y$, it is evident that the disk bundle of $\delta$  defines a 2-handle giving an ambient spin surgery along $\Delta$).

The second case to consider is when $\gamma$ does not have a null-homologous push-off into $X-Y$. In this case we can trivialize $TY|_\gamma$ arbitrarily.\footnote{In the case that X is spin, we could requiring that the spin structures on Y and X somehow agree over homologically essential loops, however we will not be able to perform ambient surgery along such loop anyways, so our choice of trivialization does not matter.}

It is necessary to verify the following three facts about the trivializations we have defined. 

\begin{enumerate}
\item The trivialization of $TY|\gamma$ does not depend on our choice of $\Delta$ or our choice of $\delta$. This shows our trivialization is well defined on the 1-cycles.

\item If $\gamma_1$  and $\gamma_2$ are equal in $H_1(Y)$, then the induced trivializations of $TY|\gamma_i$ extend across all oriented surfaces $\Sigma \subset Y$ such that $\partial \Sigma = \gamma_1 \cup \gamma_2$. This shows that our construction defines a trivialization of $TY|_1$, and not just trivializations over the 1-cycles.

\item If $\gamma$ is null homologous in $Y$, then the trivialization of $TY|_\gamma$ extends over all surfaces bounded by $Y$. This shows that we have in fact defined a spin structure on $Y$.

\end{enumerate}

(All three of these facts essentially are due to the fact that a trivial 5-plane bundle cannot have any non-trivial oriented 4-plane sub-bundles.)

To show (1), let $\Delta_1$ and $\Delta_2$ be disks with boundary $\gamma$, and let $\delta_1$ and $\delta_2$ be 2-plane bundles in $N_X \Delta_1$ that extend $N_Y\gamma$.

If we suppose that these choices induce different trivializations of $TY|_\gamma$, then $\delta_1 \oplus T\Delta_1 \cup \delta_2 \oplus T\Delta_2$ is an oriented 4-plane bundle over $\Delta_1 \cup \Delta_2$ with non-trivial $w_2$. Since, after a small perturbation, this surface lies in $X-Y$, such a bundle contradicts our condition on $w_2(X-Y)$.

To show (2), suppose $TY|_{\gamma_1\cup \gamma_2}$ has been given a trivialization that does not extend over $TY|_\Sigma$. Let $\overline{\ell}$ be a line bundle in $N_X Y|_\Sigma$, specifying a push-off of $\Sigma$ into $X-Y$ such that the push-offs of the $\gamma_i$ are null-homologous in $X-Y$. This exists because $\Sigma$ has the homotopy type of a 1-complex. Finally, let $\Delta_1$ and $\Delta_2$ be oriented surfaces bounding $\gamma_1$ and $\gamma_2$ whose interiors lie in $X-Y$, and let $\delta_1$ and $\delta_2$ be 2-plane bundles in $N_X\Delta_1$ and $N_X\Delta_2$ used to trivialize $TY|_{\gamma_1\cup \gamma_2}$. Then we can form a 4-plane bundle over the oriented surface $\Delta_1 \cup \Sigma \cup \Delta_2$ as $(T\Delta_1 \oplus \delta_1) \cup TY|_\Sigma \oplus \overline{\ell} \cup (T\Delta_1 \oplus \delta_2)$. However, this surface can be perturbed into $X-Y$, and since this 4-plane bundle is non-trivial, we again contradict the assumption on $w_2(X-Y)$.

Finally, (3) follows immediately from (2) when we let $\gamma_2$ be the empty set.

\end{proof}

\begin{lemma}
Let $Y^3$ be an embedded submanifold in $X^5$, and $\gamma$ be a loop in $Y$ which bounds a disk $\Delta$ in $X-Y$. If $Y$ possesses a spin structure such that there exists an ambient spin surgery that can be performed across $\Delta$, then \emph{all} spin surgeries can be performed across $\Delta$.
\end{lemma}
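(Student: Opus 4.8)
The plan is to show that once a single ambient spin surgery exists across $\Delta$, the family of \emph{all} ambient spin surgeries across $\Delta$ is a torsor over the same set that parameterizes all abstract spin surgeries on $Y^3$ along $\gamma$, so surjectivity onto the latter follows. First I would recall that 2-surgery along $\gamma \subset Y^3$ requires a framing of $\gamma$, i.e. a trivialization of $N_Y\gamma$; the integral framings form a $\mathbf{Z}$-torsor, and (just as for $S^3$) the spin structure on $Y^3\times I$ extends over the attached 2-handle exactly for the framings in one coset of $2\mathbf{Z}\subset\mathbf{Z}$ — the ``even'' framings relative to the spin structure. So there is a $\mathbf{Z}$-worth (equivalently, $2\mathbf{Z}$-worth of even framings) of abstract spin surgeries on $Y$ along $\gamma$, and I must match these up with the ambient ones.

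Next I would set up the ambient picture as in the preceding proposition: an ambient 2-surgery across $\Delta$ is specified by a $2$-plane subbundle $\delta$ of $N_X\Delta$ extending $N_Y\gamma$, whose associated disk bundle is the 2-handle $h$ attached directly to $Y$. Two such choices $\delta_1,\delta_2$ differ by an element of $[\gamma, SO(2)\backslash SO(2)]$-type data — more precisely, since $N_X\Delta$ is a rank-3 bundle over the disk $\Delta$ (hence trivial), extensions of the given $2$-plane $N_Y\gamma$ over $\gamma$ to a $2$-plane field over $\Delta$ differ by $\pi_1(SO(3)/SO(2)) = \pi_1(S^2) = 0$ for the plane field itself, but the induced \emph{framing} of $\gamma$ (the trivialization of $N_Y\gamma$ coming from restricting a fixed trivialization of the trivial bundle $N_X\Delta$ and comparing) is twisted by an integer, exactly the self-intersection/relative-Euler-number change. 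Thus the ambient spin surgeries across $\Delta$ — those for which the trivialization of $TY|_1\oplus N_\Delta\partial\Delta$ extends over $h$, in the sense defined just above the proposition — are parameterized by the same coset of even framings: the ``spin'' condition on the ambient 2-handle translates into the ``even relative to the induced spin structure on $Y$'' condition on the framing of $\gamma$.

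The core step is therefore: given the hypothesized ambient spin surgery with 2-plane bundle $\delta_0$ and framing $f_0$, and given \emph{any} abstract spin surgery on $Y$ along $\gamma$, which by the above has framing $f_0 + 2k$ for some $k\in\mathbf{Z}$, produce a $2$-plane subbundle $\delta_k\subset N_X\Delta$ extending $N_Y\gamma$ whose induced framing is $f_0+2k$ and which still satisfies the spin-compatibility condition. Since $N_X\Delta$ is trivial of rank $3$, I would obtain $\delta_k$ from $\delta_0$ by a clutching/twisting construction near a point of $\gamma$ (rotating $\delta_0$ through the third normal direction $2k$ times as one traverses $\gamma$): this changes the induced framing of $\gamma$ by $2k$ and, because the twist is by an \emph{even} amount, it does not change $w_2$ of the resulting $4$-plane bundle $\delta_k\oplus T\Delta$ over $\Delta$, so the spin-compatibility of Proposition~\ref{p:3hasspin}-type — namely that the trivialization of $TY|_\gamma\oplus\ell$ extends across $\delta_k\oplus T\Delta$ — is preserved. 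Hence $\delta_k$ gives an ambient spin surgery realizing the prescribed framing, and running over all $k$ realizes all spin surgeries.

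The main obstacle I expect is the bookkeeping that identifies the ``ambient'' spin condition (extension of $TY|_1\oplus N_\Delta\partial\Delta$ over the ambient 2-handle) with the ``abstract'' spin condition (extension of the spin structure on $Y\times I$ over the attached 2-handle), i.e. verifying that under the dictionary ``line bundle $=$ $N_\Delta\partial\Delta$'' both conditions single out precisely the same coset of even framings, and that the $2k$-twist really does move within that coset rather than out of it. This is essentially the same ``a trivial 5-plane bundle has no nontrivial oriented 4-plane subbundle'' principle used in the proof of Proposition~\ref{p:3hasspin}, applied to the bundle over $\Delta_0\cup\Delta$ (two copies of $\Delta$ with the two $2$-plane fields $\delta_0,\delta_k$), which is null-homologous in $X-Y$ after a small perturbation; the evenness of the twist is exactly what forces the obstruction class in $H^2$ of this closed-up surface to vanish.
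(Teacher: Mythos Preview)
Your proposal is correct and, once the bookkeeping is stripped away, is the same $\mathbb{Z}_2$ argument the paper gives, only phrased constructively rather than obstruction-theoretically. The paper's proof is three lines: an ambient surgery across $\Delta$ is a framing of $N_Y\gamma$ that extends to a $2$-frame in the trivial rank-$3$ bundle $N_X\Delta$; the obstruction to such an extension lies in $\pi_1(V_2(\mathbb{R}^3))=\mathbb{Z}_2$; and this $\mathbb{Z}_2$ coincides with whether the induced framing of $TY|_\gamma = N_Y\gamma\oplus T\gamma$ agrees or disagrees with the given spin structure on $Y$. Your ``twist by $2k$'' construction is exactly the statement that changing the framing by an even integer does not change this $\mathbb{Z}_2$ class, so the two routes reach the same conclusion; the paper's formulation simply avoids having to build the $\delta_k$ by hand.

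One correction and one simplification. The correction: extensions of the unframed $2$-plane field over $\Delta$ rel $\gamma$ are parameterized by $\pi_2(S^2)=\mathbb{Z}$, not by $\pi_1(S^2)=0$ as you wrote, and it is the connecting homomorphism $\pi_2(S^2)\to\pi_1(SO(2))$ (multiplication by $2$) in the fibration $SO(2)\to V_2(\mathbb{R}^3)\to S^2$ that explains why the realizable framings form a $2\mathbb{Z}$-coset --- this is the clean way to see your ``relative Euler number'' remark. The simplification: your closing paragraph, invoking $w_2$ of a bundle over the disk and a closed-up surface $\Delta_0\cup\Delta$, is not needed. The ambient spin condition depends only on the framing of $N_Y\gamma$ (by the definition just before the lemma), and you have already arranged that framing to be $f_0+2k$, which lies in the spin coset by hypothesis; no further $w_2$ check is required.
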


\begin{proof}
An ambient surgery across $\Delta$ is specified by a framing of $N_Y\gamma$ that extends across a 2-plane subbundle of $N_X\Delta$. Since we are looking for the 2-plane bundles in $N_X\Delta$, (which is a trivial 3-plane bundle), the obstruction to extending a framing on $N_Y\gamma$ across $N_X\Delta$ lies in $\pi_1(V_2(\mathbb{R}^3)) = \mathbb{Z}_2$. This obstruction coincides with the whether or not a given framing of $N_Y\gamma$ (specifying an ambient 2-surgery) induces a framing on $N_Y\gamma \oplus T\gamma$ that agrees or disagrees with the spin structure on $Y$.

\end{proof}

\begin{cor}[\cite{Hir}] Every orientable 3-manifold embeds in $\mathbb{R}^5$.
\end{cor}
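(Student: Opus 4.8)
The plan is to build $Y$ inside $S^5=\mathbb{R}^5\cup\{\infty\}$ by a sequence of ambient surgeries, starting from an unknotted copy of $S^3$; deleting a point of $S^5$ off the final surface then realizes the embedding in $\mathbb{R}^5$ (and we may as well assume $Y$ connected and oriented). Fix the standard unknotted $S^3\subset S^5$, whose complement deformation retracts onto a circle, so that $\pi_1(S^5-S^3)\cong\Z$ generated by the meridian; note also $w_2(S^5-S^3)=0$, since $w_2(TS^5)=0$ and this restricts. These two properties will be preserved at every stage.

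Because $\Omega^{spin}_3=0$ — all oriented $3$-manifolds are related by spin surgeries, as recorded above — $Y$ is obtained from $S^3$ by a finite sequence of spin (Dehn) surgeries $S^3=M_0\rightsquigarrow M_1\rightsquigarrow\cdots\rightsquigarrow M_\ell=Y$; concretely one may take $Y$ to be an even-framed surgery on a link $L\subset S^3$ (this follows from $\Omega^{spin}_3=0$ by a standard handle-trading argument, realizing $Y$ as the boundary of a simply connected spin $4$-manifold built from a $0$-handle and $2$-handles). I want to perform each of these surgeries ambiently. Inductively, suppose $M_i\subset S^5$ has been constructed with $\pi_1(S^5-M_i)\cong\Z$ generated by the meridian. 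The $(i+1)$-st surgery is a Dehn surgery along some framed knot $\gamma\subset M_i$. By Lemma \ref{l:2surgispos}, $\gamma$ bounds an embedded disk $\Delta\subset S^5$ meeting $M_i$ only along $\gamma$. Since $w_2(S^5-M_i)=0$, Proposition \ref{p:3hasspin} equips $M_i$ with a spin structure for which every ambient surgery is a spin surgery, so in particular some ambient spin surgery can be performed across $\Delta$; by the lemma preceding this corollary it follows that \emph{all} spin surgeries across $\Delta$ can be realized ambiently. Performing the required one yields $M_{i+1}\subset S^5$, and by Proposition \ref{p:pi1surgery2} the fundamental group of the complement is unchanged — still infinite cyclic, and still generated by a meridian, since away from the surgery region the meridian is untouched (if necessary, Proposition \ref{p:1surgery}, applicable because $S^5$ is simply connected, restores this form after some extra ambient $1$-surgeries, which only connect-sum $M_i$ with copies of $S^1\times S^2$ that are absorbed into the surgery description). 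After $\ell$ steps we obtain $Y\subset S^5$, hence $Y\subset\mathbb{R}^5$.

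The step that requires care is the bookkeeping of spin structures: Proposition \ref{p:3hasspin} hands us \emph{a} spin structure on each $M_i$, and we must be sure that the specific framed surgery dictated by our chosen presentation of $Y$ is spin with respect to \emph{that} structure. The relevant point is that this spin structure is pinned down by the ambient geometry precisely on the curves that admit a null-homologous push-off into the complement — which, by the proof of Lemma \ref{l:2surgispos}, is exactly the class of curves along which surgery is possible at all — so the part of the spin structure that matters is forced, and agrees with the one propagated across the $2$-handle from $M_i$. For $i=0$ this is transparent: $S^3$ has a unique spin structure, and $w$-framed surgery on a knot in $S^3$ is spin exactly when $w$ is even, which is what the even-framed presentation of $Y$ supplies; the inductive steps then carry this compatibility forward. (Presenting $Y$ as an even-framed surgery on a link in $S^3$, rather than invoking a general spin cobordism, is also what lets us avoid ambient $3$-surgeries — surgeries on $2$-spheres in $M_i$ — which need not be realizable, since a $2$-sphere in $M_i$ need not bound an embedded $3$-disk in $S^5-M_i$ when $\pi_2(S^5-M_i)\neq 0$.) Apart from this, the proof is a direct assembly of the results of Sections \ref{s:surgery1} and \ref{s:surgery2}.
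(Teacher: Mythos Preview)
Your argument is correct and is essentially the paper's proof: start from an unknotted $S^3$, invoke $\Omega^{spin}_3=0$, and realize the required spin surgeries ambiently using Proposition~\ref{p:3hasspin} and the lemma following it. The paper streamlines the execution in one way worth noting: it places $S^3$ in $\partial D^5$ and performs all of the surgeries \emph{simultaneously} on a link $L\subset S^3$, whose components trivially bound disjoint embedded disks in $D^5$ (just cone into the interior). Because everything happens on $S^3$, which has a unique spin structure, the entire third paragraph of your write-up --- the inductive tracking of $\pi_1$ via Proposition~\ref{p:pi1surgery2} and the compatibility of the ambient spin structure on each $M_i$ with the one propagated across the previous $2$-handle --- becomes unnecessary. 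Your inductive version is fine, but the all-at-once phrasing is what lets the paper dispatch the corollary in three lines.
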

\begin{proof} $S^3$ can be embedded in the boundary of $D^5$. Then  ambient spin surgeries on this $S^3$ can be performed arbitrarily, since any collection of loops in $S^3$ bounds a disjoint collection of embedded disks in $D^5$. Since $\Omega^{spin}_3 = 0$, all 3-manifolds are related by spin surgeries, therefore $S^3$ can be modified by ambient spin surgeries to be any manifold we choose.
\end{proof}
 This is, essentially, Hirsch's original proof of this fact, \cite{Hir}. Proofs showing that non-orientable 3-manifolds also embed in $\mathbb{R}^5$ using the same set of ideas are given in \cite{Wallemb}, \cite{R}, and \cite{BS}.

\subsection{Spin-surgery on 3-manifolds with boundary}
Given a 3-manifold $Y$ with boundary, what 3-manifolds can be obtained by spin surgeries on the interior of $Y$? The situation is much the same as the closed case. We have the following generalization of the fact that $\Omega^{spin}_3 = 0$.

\begin{lemma} 
\label{l:partialspinsurgery} Any two spin 3-manifolds with boundary are related via spin-surgeries if the spin structures agree on the boundary.
\end{lemma}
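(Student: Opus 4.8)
The plan is to imitate the standard argument showing that $\Omega^{spin}_3 = 0$ forces two closed spin $3$-manifolds to be related by spin surgeries, but to carry it out relative to the common boundary so that no surgery touches it. Write $F$ for the boundary, equipped with the spin structure that is the common restriction of the spin structures of $Y_0$ and $Y_1$. First I would close up: form the closed spin $3$-manifold $Z = Y_0 \cup_F \overline{Y}_1$, which inherits a spin structure because the two spin structures agree on $F$. Since $\Omega^{spin}_3 = 0$, $Z$ bounds a compact spin $4$-manifold $W$, which we may take to be connected.

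The next step is to convert $W$ into a \emph{relative} cobordism from $Y_0$ to $Y_1$. Set $V = W \cup_{Y_0}(Y_0 \times I)$, gluing the copy of $Y_0$ inside $\partial W$ to $Y_0 \times \{0\}$. The spin structure of $W$ restricts on that copy of $Y_0$ to its given spin structure, which is also the restriction of the product spin structure on $Y_0 \times I$, so $V$ is spin. After rounding the corner along $F$, $V$ is a spin cobordism from $Y_0$ to $Y_1$ whose ``side'' boundary is $F \times I$; in particular every handle of $V$ built on $Y_0 \times I$ is attached in the interior, away from $F$. Choose a Morse function on $V$ that is constant near $F \times I$, minimal on $Y_0$ and maximal on $Y_1$, yielding a handle decomposition $V = (Y_0 \times I) + h_{a_1} + \cdots + h_{a_\ell}$ with each $a_i \in \{0,1,2,3,4\}$. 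Since $V$, $Y_0$ and $Y_1$ are connected, the $0$- and $4$-handles can be cancelled, and the remaining handles reordered into non-decreasing index (cf. \cite{GS}).

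Now I would run through the handles in order, checking that each realizes a spin surgery on the current level set $N$. An index-$2$ handle is by definition an integral Dehn surgery, and it is a \emph{spin} surgery because the spin structure of $V$, restricted to the trace $N \times I$ together with this handle, is exactly the required extension. An index-$1$ handle replaces $N$ by $N \# (S^1 \times S^2)$; this is precisely $0$-framed surgery on an unknot lying in a ball in $N$, which is a spin surgery (the attaching circle is null-homotopic, the framing even), and the ambiguity in the spin structure on the new $S^1 \times S^2$ summand is pinned down by demanding that it extend across the handle of $V$. After all index-$1$ and index-$2$ handles are dealt with, turning the remaining purely index-$3$ part of $V$ upside down shows the level set is diffeomorphic to $Y_1 \# (\#^k S^1 \times S^2)$ for some $k$, and each index-$3$ handle then peels off one summand by a $0$-framed spin surgery dual to those above. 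Concatenating all the surgery traces recovers $V$ up to spin structure, so the final level set is $Y_1$ with its prescribed spin structure.

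Steps one and two are routine: the ``folding'' of the closed null-cobordism into a relative cobordism is essentially forced, once one notices that $\partial W$ is connected and so cannot by itself be split into an incoming and an outgoing end. The step I expect to be the main obstacle is the handle analysis of the third paragraph — one must make sure the index-$1$ and index-$3$ handles (the index-$2$ handles being spin surgeries almost by definition) are genuinely realized by spin Dehn surgeries on null-homotopic circles with even framing, \emph{and} that the spin structure they induce on the new level set is the one coming from $V$, so that the chain of surgeries terminates at $Y_1$ with the correct spin structure rather than its companion.
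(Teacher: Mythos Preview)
Your argument is correct and follows the same strategy as the paper: close up $Y_0 \cup \overline{Y_1}$ along the common spin boundary, invoke $\Omega^{spin}_3 = 0$ to fill by a spin $4$-manifold, and read the result as a relative spin cobordism whose handles encode the surgeries. The only tactical difference is in handling the index-$1$ and index-$3$ handles: the paper simply \emph{surgers out the $1$-handles} (trades them for $2$-handles, so the modified cobordism consists only of genuine spin Dehn surgeries), whereas you keep the cobordism intact and argue directly that a $4$-dimensional $1$-handle (resp.\ $3$-handle) has the same effect on the spin level set as a $0$-framed $2$-handle on an unknot (resp.\ on the $S^1$-core of an $S^1\times S^2$ summand). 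Both are standard moves; the paper's is terser, while yours makes the spin-structure bookkeeping on the intermediate level sets explicit --- which is exactly the point you flagged as the main obstacle, and which you handle correctly.
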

We  will use ambient spin-surgeries on 3-manifolds with boundary to construct concordances between surfaces.

\begin{proof}
Let $Y_0$ and $Y_1$ be 3-manifolds with boundary $\partial Y_i = \Sigma_i$ and let $\phi:\Sigma_0 \longrightarrow \Sigma_1$ be a spin homeomorphism of the boundary which takes the spin structure of $\Sigma_0$ to that of $\Sigma_1$.
It will be sufficient to construct a spin-cobordism rel boundary from $Y_1$ to $Y_2$ that has a handle decomposition without 1-handles. 
Begin by constructing a closed spin 3-manifold $Y$ by gluing the boundaries together of $Y_1$ and $Y_2$ together via $\phi$, e.g. $Y = Y_0 \cup_{\phi \times \{0\}} (\Sigma_1 \times I) \cup_{id \times \{1\}} Y_1$. Since $\Omega
^{spin}_3 = 0$, we know that $Y$ bounds a spin 4-manifold $X$. Induce a relative handle decomposition on $(X,Y_1,Y_2)$ from a Morse function $f: Y \rightarrow [0,1]$ such that $f(Y|_{Y_0}) = 0$, $f(Y|_{Y_1}) = 1$, and $f(Y|_{\Sigma\times \{t\}}) = t$. Surger out the 1-handles. This gives the required cobordism.

\end{proof} 

Note that the boundary did not need to be connected.

\begin{cor}[\cite{Ker}]\label{ss:KerSlice} Every 2-knot $K$ in $S^4$ is slice, that is, bounds a $D^3$ in $D^5$. \end{cor}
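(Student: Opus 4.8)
The plan is to realize a 2-knot $K \subset S^4$ as the boundary of its obvious Seifert-type 3-manifold inside $D^5$, and then improve that 3-manifold to a disk $D^3$ using the ambient spin-surgery machinery of the preceding lemmas. First I would push $K$ slightly into the interior of $D^5$ along the collar $S^4 \times [0,1)$, so that $K$ sits inside $S^4 \times \{1/2\}$, say, and I have room to work on both sides. Since $\Omega^{spin}_3 = 0$ (indeed even $\Omega_2 = 0$), the 2-knot $K$, being a surface, bounds an embedded 3-manifold $Y$ in $D^5$ with $\partial Y = K$; concretely one can take a Seifert solid for $K$ in $S^4$ itself and push its interior into $D^5$, or simply cone $K$ off to the center of $D^5$ and then smooth and perturb to an embedding. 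The goal is then to modify $Y$, rel its boundary $K$, through a sequence of ambient surgeries in $D^5$ until it becomes $D^3$.

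The key steps, in order, are as follows. First, use ambient $1$-surgeries on $Y$ (self connect-sums, which by the discussion in Section~\ref{s:surgery1} are always possible along embedded arcs, and which only add $S^1 \times S^2$ summands) to arrange that $Y$ is connected; here I would also note that $D^5$ is simply connected, so by Proposition~\ref{p:1surgery} a further round of ambient $1$-surgeries makes $\pi_1(D^5 - Y)$ cyclic, hence (since $K$ is a $2$-sphere bounding $Y$) generated by the meridian. Second, apply Lemma~\ref{l:partialspinsurgery}: the pair $(Y, \partial Y = K)$ and the pair $(D^3, \partial D^3 = S^2)$ are both spin $3$-manifolds with boundary, and their spin structures agree on the boundary (the unique spin structure on $S^2$), so $Y$ and $D^3$ are related by a sequence of spin surgeries performed on the interior of $Y$, rel boundary. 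Third, realize each of these abstract spin surgeries \emph{ambiently} inside $D^5 - \nu K$: each surgery is Dehn surgery along a loop $\gamma \subset Y$, by Lemma~\ref{l:2surgispos} such a $\gamma$ (after a meridional correction) bounds an embedded disk $\Delta$ in $D^5$ meeting $Y$ only along $\partial\Delta$, by Proposition~\ref{p:3hasspin} there is a compatible spin structure on $Y$ making \emph{some} ambient surgery across $\Delta$ a spin surgery — note the hypothesis $\langle w_2(D^5 - Y), h\rangle = 0$ holds automatically since $D^5$, hence $D^5 - Y$, is spin — and then by the Lemma following Proposition~\ref{p:3hasspin} \emph{every} spin surgery across $\Delta$ can be performed ambiently, in particular the one dictated by Lemma~\ref{l:partialspinsurgery}. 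Finally, Proposition~\ref{p:pi1surgery2} guarantees these ambient $2$-surgeries do not disturb the fundamental group of the complement, so the cyclic-$\pi_1$ hypothesis needed for Lemma~\ref{l:2surgispos} persists at every stage; concatenating all these ambient surgeries produces an embedded $D^3 \subset D^5$ with $\partial D^3 = K$, which is exactly the assertion that $K$ is slice.

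The main obstacle I anticipate is bookkeeping rather than any deep new idea: one must check that the spin surgeries supplied by Lemma~\ref{l:partialspinsurgery} can be carried out \emph{in the order given by a handle decomposition without $1$-handles}, and that the intermediate $3$-manifolds always continue to satisfy the cyclic-$\pi_1$-of-complement condition (this is where Proposition~\ref{p:pi1surgery2} is essential, and where one must be careful that the $1$-surgeries used to kill $\pi_1$ of the complement are done first and do not need to be revisited). A secondary subtlety is ensuring that the spin structure produced by Proposition~\ref{p:3hasspin} on the \emph{current} $Y$ is the one compatible with the surgery sequence of Lemma~\ref{l:partialspinsurgery}; since spin surgeries are precisely the ones compatible with \emph{any} spin structure that extends appropriately, and since the surgery is performed along a single loop $\gamma$ at a time — where the choice of framing is a single $\mathbb{Z}_2$ of freedom, absorbed into the definition of ``spin surgery'' — this compatibility is exactly what the preceding two lemmas were set up to deliver, so it should go through cleanly. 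I would close by remarking that the argument uses nothing about $S^4$ beyond the facts that $K$ is a $2$-sphere and that $D^5$ is a simply connected spin manifold bounding it, which is why this recovers Kervaire's theorem with no hard $3$- or $4$-dimensional input.

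\begin{proof}
Push $K$ into the interior of $D^5$ using a collar of $S^4 = \partial D^5$, so that $K$ lies in $S^4 \times \{1/2\} \subset S^4 \times [0,1] \subset D^5$. Since $\Omega^{spin}_3 = 0$, $K$ bounds an embedded, oriented $3$-manifold $Y$ in $D^5$ with $\partial Y = K$ (for instance push the interior of a Seifert solid for $K$ off of $S^4$ and into $D^5$). Perform ambient $1$-surgeries on $Y$ inside $D^5 - \nu K$ to make $Y$ connected, and then, applying Proposition~\ref{p:1surgery} with $M = D^5$ simply connected, perform further ambient $1$-surgeries so that $\pi_1(D^5 - Y)$ is cyclic, generated by the meridian to $Y$.

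Now $(Y, \partial Y = K)$ and $(D^3, \partial D^3 = S^2)$ are spin $3$-manifolds with boundary whose spin structures agree on the boundary, so by Lemma~\ref{l:partialspinsurgery} there is a sequence of spin surgeries on the interior of $Y$, rel boundary, converting $Y$ into $D^3$; moreover, by the proof of that lemma, this sequence arises from a handle decomposition without $1$-handles, so each step is Dehn surgery along a single loop $\gamma$ in the current $3$-manifold. We realize each such surgery ambiently in $D^5 - \nu K$. First, by Lemma~\ref{l:2surgispos} (using that the complement has cyclic $\pi_1$ generated by the meridian), the loop $\gamma$ — after adding meridional copies if needed — bounds an embedded disk $\Delta$ in $D^5$ meeting $Y$ only along $\partial \Delta$. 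Since $D^5$, and hence $D^5 - Y$, is spin, the hypothesis of Proposition~\ref{p:3hasspin} holds, so there is a spin structure on $Y$ relative to which some ambient surgery across $\Delta$ is a spin surgery; by the Lemma following Proposition~\ref{p:3hasspin}, \emph{every} spin surgery across $\Delta$ can then be performed ambiently, in particular the one prescribed by Lemma~\ref{l:partialspinsurgery}. By Proposition~\ref{p:pi1surgery2}, this ambient $2$-surgery does not change $\pi_1$ of the complement, so the hypothesis ``cyclic $\pi_1$ generated by the meridian'' is preserved, and we may iterate.

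Concatenating all these ambient surgeries yields an embedded $D^3 \subset D^5$ with $\partial D^3 = K$. Hence $K$ is slice.
\end{proof}
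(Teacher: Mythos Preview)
Your proposal is correct and follows essentially the same approach as the paper. The paper's proof is a single line---``Perform ambient spin surgeries in $D^5$ to a Seifert manifold for $K$''---and you have faithfully unpacked that line, making explicit the invocations of Proposition~\ref{p:1surgery}, Lemma~\ref{l:2surgispos}, Proposition~\ref{p:3hasspin}, Lemma~\ref{l:partialspinsurgery}, and Proposition~\ref{p:pi1surgery2} that the one-liner leaves implicit.
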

\begin{proof}
Perform ambient spin surgeries in $D^5$ to a Seifert manifold for $K$.
\end{proof}
 In fact, we can similarly show that \emph{every} punctured 3-manifold is a Seifert surface for $K$ in $D^5$. This is essentially Kervaire's proof. 

\section{Concordance of Surfaces in a 4-manifold}\label{s:concordance}

Having developed in the previous two sections the necessary tools for modifying codimension 2 submanifolds, it will now be a relatively simple thing to determine the concordance classes of surfaces in 4-manifolds. It is clearly necessary for concordant surfaces to be in the same homology class. The following says that this is also sufficient.

\begin{theorem}\label{t:main}
Let X be a simply connected manifold. Then two smoothly (resp. locally flat) embedded surfaces  $\Sigma_0$ and $\Sigma_1$ are smoothly (resp. locally flat) concordant if and only if they have the same genus and are in the same homology class.
\end{theorem}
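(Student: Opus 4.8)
The plan is to reduce the concordance problem to the ambient‐surgery machinery assembled in Sections \ref{s:surgery1} and \ref{s:surgery2}. Given $\Sigma_0$ and $\Sigma_1$ of the same genus $g$ in the same class $h\in H_2(X)$, I want to build an embedded $\Sigma_g\times I \hookrightarrow X\times I$ joining them. The first step is a \emph{reduction of the fundamental group of the complement}: I put $\Sigma_0$ and $\Sigma_1$ in ``good position'' by performing a sequence of ambient $1$-surgeries (self connected sums with small $S^1\times S^2$'s, in the bordism picture; Proposition \ref{p:1surgery}) until the complement of each in $X$ has \emph{cyclic} fundamental group generated by the meridian. These ambient $1$-surgeries do not change the homology class and can be realized inside a product concordance, so it suffices to prove the theorem under the extra hypothesis $\pi_1(X-\Sigma_i)$ cyclic. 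But then, by Lemma \ref{l:2surgispos}, every loop in $\Sigma_i$ bounds an embedded disk in $X$ meeting $\Sigma_i$ only along the boundary, so I have complete freedom to do ambient $2$-surgeries.

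Second, I build a \emph{bordism between the complements} that I will then argue is actually a product. Consider $W = X\times I \setminus \nu(\mathcal{Y})$, where $\mathcal{Y}$ is the trace in $X\times I$ of a sequence of ambient surgeries taking $\Sigma_0$ to $\Sigma_1$. The key tool is Lemma \ref{l:partialspinsurgery}: since $\Sigma_0$ and $\Sigma_1$ have the same genus, there is an orientation‐preserving (hence spin) homeomorphism of the boundaries, and the relevant circle‐bundle data on $\partial\nu\Sigma_i$ is controlled; applying the lemma to $3$‐manifolds with boundary (the pieces of $\partial\nu\Sigma_i$ or, better, appropriate Seifert‐type fillings) produces a spin cobordism with a handle decomposition having no $1$‐handles. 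The point of suppressing $1$‐handles — and the reason Proposition \ref{p:3hasspin} and the spin‐surgery framework are invoked — is precisely that every such $2$‐ and $3$‐handle can be \emph{realized ambiently}: the surgeries are spin surgeries, so by the lemma following Proposition \ref{p:3hasspin} the required framings extend over the disks guaranteed by Lemma \ref{l:2surgispos}. This realizes the abstract spin cobordism as an honest concordance $\Sigma_g\times I \hookrightarrow X\times I$, after checking that the level sets remain genus‑$g$ surfaces (the $2$‑handles raise genus, the $3$‑handles lower it back, and one arranges all $2$‑handles before all $3$‑handles by the standard rearrangement of a handle decomposition).

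Third, I handle the bookkeeping that makes the homology and genus come out right. The homology class is preserved at every stage because ambient surgery changes $\Sigma_i$ by surgery on a framed submanifold bounding in the ambient manifold, and the net effect of the $2$‑handles and $3$‑handles on $[\Sigma]$ cancels; one should verify this using the Mayer–Vietoris / handle‑trace description $M\times I-\nu\mathcal Y = (M-\nu Y)\times I + h_{m+1}$ from Section \ref{notation}. The genus is matched because the abstract spin cobordism from Lemma \ref{l:partialspinsurgery} can be taken between the chosen complements of $\Sigma_0$ and $\Sigma_1$, whose boundaries are abstractly homeomorphic exactly when the genera agree; the no‑$1$‑handle condition then forces the total genus change along the concordance to be zero. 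For the locally flat statement one works in the complement of a point, where $X\times I$ carries a smooth structure (Section on topological manifolds), and runs the same argument.

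The main obstacle, and the step I expect to absorb most of the work, is \emph{verifying that the needed framings are spin framings so that the $2$‑handles of the abstract cobordism can be installed ambiently}. This is where Proposition \ref{p:3hasspin} is essential: one must produce on the relevant $3$‑manifold(s) a spin structure compatible with \emph{all} the disks along which surgery is performed, and this requires the vanishing hypothesis on $w_2$ of the complement — which in turn is where the simple connectivity of $X$ and the cyclic‑$\pi_1$ normalization of the first step get used (they force $H_2(X-\Sigma_i)$ to be detected appropriately and $w_2$ to evaluate trivially on it, or else one corrects by meridians). Coordinating the spin structures across the gluing region $\partial\nu\Sigma_i$ so that Lemma \ref{l:partialspinsurgery} applies with matching boundary data, while simultaneously keeping control of $\pi_1$ via Proposition \ref{p:pi1surgery2}, is the delicate core of the proof; everything else is the routine translation between abstract cobordisms, handle decompositions, and ambient surgeries already set up in Sections \ref{notation}--\ref{s:surgery2}.
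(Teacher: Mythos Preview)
There is a genuine gap: your argument never produces an object in $X\times I$ that joins $\Sigma_0$ to $\Sigma_1$. In the paper the very first move is the Pontryagin--Thom step: because $[\Sigma_0]=[\Sigma_1]$, the two surfaces are preimages of $\mathbf{CP}^1$ under homotopic maps $X\to\mathbf{CP}^2$, so there is an embedded $3$-manifold $Y\subset X\times I$ with $\partial Y=\Sigma_0\sqcup\Sigma_1$. Everything thereafter is ambient surgery on this $Y$ inside the $5$-manifold $X\times I$: Proposition~\ref{p:1surgery} makes $\pi_1(X\times I - Y)$ cyclic, Proposition~\ref{p:3hasspin} endows $Y$ with a spin structure compatible with ambient $2$-surgeries (or, if $w_2$ does not vanish on the complement, one tubes the surgery disk to a sphere $S$ with $\langle w_2,[S]\rangle=1$ to flip the framing parity), and Lemma~\ref{l:partialspinsurgery} is applied to $Y$ itself---a $3$-manifold with boundary $\Sigma_0\sqcup\Sigma_1$---to conclude it is spin-cobordant rel boundary to $\Sigma\times I$ via $2$- and $3$-handles only, each of which can be installed ambiently. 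Your outline never constructs this $Y$, and there is nothing else in the toolkit that links the two surfaces.

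Instead you apply the machinery to the surfaces $\Sigma_i\subset X$ themselves, and this does not work. Ambient $1$-surgery on a surface in a $4$-manifold raises the genus, so your ``normalization'' step destroys the hypothesis you need and is not ``realized inside a product concordance.'' More seriously, you then posit ``a sequence of ambient surgeries taking $\Sigma_0$ to $\Sigma_1$'' and call its trace $\mathcal Y$; but the existence of such a sequence is exactly what is to be proved, and Lemma~\ref{l:partialspinsurgery} applied to $\partial\nu\Sigma_i$ or to ``Seifert-type fillings'' does not supply it---those $3$-manifolds do not have $\Sigma_0\sqcup\Sigma_1$ as their boundary, so the lemma says nothing about a cobordism between the two surfaces. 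Finally, the non-spin case is not handled by ``correcting by meridians'': the mechanism is tubing the surgery disk to a $w_2$-odd sphere in the complement, which exists because $H_2(\pi_1)=0$ for cyclic $\pi_1$. Once you insert the Thom step and redirect all the ambient surgery to the $3$-manifold $Y\subset X\times I$, the rest of the argument collapses to what the paper does.
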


The basic structure of the proof is as follows:
\begin{itemize}
\item Find \emph{some} 3-manifold in $X\times I$ connecting $\Sigma_0 \times \{0\}$ and $\Sigma_1 \times \{1\}$.
\item Use Proposition \ref{p:1surgery} to make the fundamental group of the complement cyclic.
\item Use ambient 2-surgery as in Section \ref{s:surgery2} to surger the 3-manifold to a concordance.
\end{itemize}

\begin{proof}
Since $\Sigma_0$ and $\Sigma_1$ represent the same homology class (and since both possess normal bundles), they are the preimage of $CP^1$ of homotopic maps in $[X,CP^2] = H_2(X)$. Therefore, by the Thom construction there is a 3-manifold $Y$ embedded in $X \times I$ such that $\partial Y = \Sigma_0 \times \{0\}$ and $\Sigma_1 \times \{1\}$. 

By Proposition \ref{p:1surgery}, we can perform ambient 1-surgery on $Y$ so that $\pi_1(X\times I-Y)$ is cyclic. By Lemma \ref{l:2surgispos} this implies we can perform ambient 2-surgery on any loop in $Y$. There are two cases. First suppose that $\langle w_2(X\times I-Y), h \rangle = 0$ for all $h\in H_2(X\times I-Y,\mathbb{Z})$. Then by Proposition \ref{p:3hasspin} we can give $Y$ a spin structure such that we can perform any spin-surgery along any loop in $Y$. Lemma \ref{l:partialspinsurgery} tells us that $Y$ is related by spin surgeries to a product $\Sigma\times I$, so we can perform ambient spin surgeries, to make $Y$ into a concordance.

In the second case, suppose there is no induced spin structure on $Y$, that is suppose $\langle w_2(X\times I-Y), h \rangle \neq 0$ for some $h\in H_2(X\times I-Y,\mathbb{Z})$. The class $h$ can be represented by an embedded sphere $S$ because of the following exact sequence
\[ \pi_2(X\times I -Y) \longrightarrow H_2(X\times I - Y) \longrightarrow H_2(\pi_1) \longrightarrow 0 \]
and the fact that $H_2(G) = 0$ when $G$ is a cyclic group. Now we can surger $Y$ to a concordance for the following reason. We can perform \emph{any} 2-surgery ambiently on $Y$, not just the ambient spin surgeries, because if $\Delta$ is a disk with $\partial \Delta = \gamma \subset Y$ along which we would like to perform ambient 2-surgery, and we cannot perform the surgery we would like to over $\Delta$ (that is, we have a trivialization of $N_Y\gamma$ that does not extend over $N_X\Delta$), then after tubing $\Delta$ to a copy of $S$, we will be able to perform that surgery.

\end{proof}

\begin{remark} We in fact show much more than that $\Sigma_0$ and $\Sigma_1$ are concordant. The same proof shows that for any two homologous surfaces in X, one can show that not only does a concordance exist (in the case when the surfaces have the same genus), but also that \emph{any} 3-manifold with the correct boundary can be substituted in for the concordance. 
\end{remark}

\begin{remark}
With only minor modifications it is also possible to show that concordances can be found in any h-cobordism, not just in $X\times I$. That is if $M^5$ is an h-cobordism with $\partial M = X \cup \tilde{X}$, then there exists an embedded $Y = \Sigma \times I$ connecting surfaces in $X$ and $\tilde{X}$ that have the same genus and homology class in $M$.
\end{remark}

We can also prove a non-simply connected version. 

\begin{theorem}\label{t:pi1conc}
Let $\Sigma_0$ and $\Sigma_1$ be two surfaces which are smoothly (resp. locally flat) embedded in a not-necessarily simply connected 4-manifold X. Assume further that the following are satisfied:
\begin{enumerate}
\item $\pi_1(\Sigma_i) \longrightarrow \pi_1(X)$ is trivial for $i=0,1$.
\item $[\Sigma_0]= [\Sigma_1]$ in $H_2(X,\mathbb{Z}[\pi_1])$.
\item There exists a third surface $\overline{\Sigma}$ in $X$ such that $\pi_1(\overline{\Sigma}) \longrightarrow \pi_1(X)$ is trivial, $[\overline{\Sigma}]= [\Sigma_i] \in H_2(X,\mathbb{Z}[\pi_1])$, and the meridian of $\overline{\Sigma}$ is null-homotopic in $X - \overline{\Sigma}$
\end{enumerate}
Then $\Sigma_0$ and $\Sigma_1$ are smoothly (resp. locally flat) concordant.
\end{theorem}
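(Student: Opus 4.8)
The plan is to mimic the proof of Theorem \ref{t:main} as closely as possible, replacing homological inputs over $\Z$ by homological inputs over $\Z[\pi_1(X)]$, and to introduce the auxiliary surface $\overline{\Sigma}$ precisely at the point where simple connectivity of $X$ was used to control the complement's fundamental group. First I would construct \emph{some} embedded 3-manifold $Y \subset X \times I$ with $\partial Y = \Sigma_0 \times \{0\} \sqcup \Sigma_1 \times \{1\}$. Hypothesis (1) guarantees each $\Sigma_i$ has trivial normal $\Z[\pi_1]$-intersection data and actually has a well-defined normal bundle (it does, being orientable with trivially-mapped $\pi_1$); hypothesis (2), $[\Sigma_0] = [\Sigma_1]$ in $H_2(X;\Z[\pi_1])$, is exactly the condition needed so that the two surfaces are preimages of $CP^1$ under homotopic maps, or more directly so that $\Sigma_0 \times \{0\} - \Sigma_1 \times \{1\}$ bounds in $X \times I$ in the appropriate equivariant sense; a relative Thom/Pontryagin argument then produces $Y$.

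Next I would bring in $\overline{\Sigma}$. Using hypothesis (3), $\overline{\Sigma} \times \{1/2\}$ sits inside $X \times I$ homologous (over $\Z[\pi_1]$) to $\Sigma_0 \times \{0\}$, so by the same construction there is a 3-manifold $Z$ cobounding $\Sigma_0 \times \{0\}$ and $\overline{\Sigma} \times \{1/2\}$, and another cobounding $\overline{\Sigma} \times \{1/2\}$ and $\Sigma_1 \times \{1\}$; stacking, I may as well assume $Y$ contains a copy of $\overline{\Sigma}$ as a regular level set, or — cleaner — I would just run the argument twice, once on each half, so it suffices to concord $\Sigma_0$ to $\overline{\Sigma}$. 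Now the point of the meridian of $\overline{\Sigma}$ being null-homotopic in $X - \overline{\Sigma}$: together with hypothesis (1) (so that $\pi_1(X - \overline{\Sigma}) \to \pi_1(X)$ is onto with kernel normally generated by the meridian, hence an iso) this forces $\pi_1(X - \overline{\Sigma}) = \pi_1(X)$. Then, exactly as in Proposition \ref{p:1surgery}, $\pi_1(X \times I - Y)$ is generated by conjugates of the meridian together with (a lift of) $\pi_1(X)$; ambient 1-surgeries identify all the meridional generators, but now the best we can say is that $\pi_1(X \times I - Y)$ maps isomorphically to $\pi_1(X)$ with the meridian mapping to $1$. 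The role of hypothesis (3) is to certify that this target group is actually \emph{achievable} as a complement group for a surface with the right boundary data — i.e. that our ambient-1-surgered $Y$ really does have $\pi_1 = \pi_1(X)$ and meridian trivial, matching $\overline{\Sigma}$.

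With the fundamental group of the complement pinned down, I would finish as in Theorem \ref{t:main}: Lemma \ref{l:2surgispos} needs ``cyclic fundamental group generated by the meridian,'' which is \emph{not} literally true here, so I would instead use the weaker fact that the meridian is null-homotopic in $X \times I - Y$ — this already suffices to show any loop in $Y$ bounds an embedded disk in $X \times I$ meeting $Y$ only along its boundary, since a push-off of the loop differs from a nullhomotopic curve by meridians, and the meridians are themselves nullhomotopic, so after tubing the push-off is nullhomotopic in the complement (one does need to check the push-off's class, coming from $\pi_1(Y) \to \pi_1(X)$, is trivial, which is hypothesis (1) applied to $Y$ — or rather to each level). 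Then split into the two cases of the Theorem \ref{t:main} proof according to whether $w_2(X \times I - Y)$ vanishes on $H_2(X \times I - Y; \Z)$: in the spin case invoke Proposition \ref{p:3hasspin} and Lemma \ref{l:partialspinsurgery} to spin-surger $Y$ to $\Sigma \times I$; in the non-spin case use an embedded sphere $S$ carrying a class with $\langle w_2, [S]\rangle \neq 0$ to upgrade arbitrary framings to admissible ones by tubing. Here the sphere $S$ is produced from $H_2(\pi_1) $-vanishing as before, \emph{except} $\pi_1$ is no longer cyclic, so one needs $H_2(\pi_1(X); \Z_2)$-type control on the relevant class; I expect this is where a little care is required, and one may need an extra hypothesis or to observe that the obstruction actually lives in a quotient killed by the existence of $\overline\Sigma$.

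The main obstacle, as flagged, is Lemma \ref{l:2surgispos}: its hypothesis genuinely used ``cyclic, meridian-generated,'' which fails for nonabelian $\pi_1(X)$. The fix sketched above — that mere nullhomotopy of the meridian suffices to bound loops of $Y$ by disks in the complement — is the crux, and checking it rigorously (in particular that the disks can be made embedded, using $n \geq 5$ general position, and that the relevant $\pi_1(Y)$-classes die by hypothesis (1)) is where the real work lies; everything downstream is then a faithful transcription of the Theorem \ref{t:main} argument.
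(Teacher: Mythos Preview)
Your approach and the paper's diverge at the very first step, and the divergence is exactly what resolves the obstacles you flag. The paper does \emph{not} work in $X\times I$ directly. Instead, it uses hypothesis~(1) to lift each $\Sigma_i$ to the universal cover $\tilde X$, uses hypothesis~(2) to see that these lifts are homologous there, and then runs the entire proof of Theorem~\ref{t:main} verbatim inside the simply connected manifold $\tilde X\times I$. Hypothesis~(3) is used to route the connecting 3-manifold through a lift of $\overline\Sigma$, so that the complement in $\tilde X\times I$ can be made genuinely simply connected --- not merely $\pi_1(X)$ with trivial meridian. In particular, Lemma~\ref{l:2surgispos} applies on the nose (cyclic group = trivial group, generated by the meridian), and the non-spin case goes through since $H_2$ of the trivial group vanishes. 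Both of the obstacles you identified simply evaporate.

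The price the paper pays is on the back end: the concordance one builds lives in $\tilde X\times I$, and descends only to an \emph{immersed} concordance in $X\times I$. The bulk of the paper's proof is a separate lemma showing one can equivariantly surger the deck-translates $\{Y_\alpha\}$ to be pairwise disjoint, after which equivariant 2-surgeries (supported near 2-disks in a 5-manifold, hence easily separated from their translates) produce a concordance that descends to an honest embedding. Your direct approach would avoid this descent argument entirely, but as you correctly sensed, making Lemma~\ref{l:2surgispos} work when $\pi_1(X\times I - Y)\cong\pi_1(X)$ is nontrivial is a genuine problem: loops in $Y$ need not map trivially to $\pi_1(X)$ (the 1-surgeries you performed can introduce loops with nontrivial image), so they need not bound disks in the complement, and the spin-cobordism of Lemma~\ref{l:partialspinsurgery} gives you no control over which loops you must surger along. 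The paper's lift-then-descend strategy is precisely the device that trades this uncontrolled $\pi_1$ problem for a controlled equivariant-embedding problem.
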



\begin{proof}
Condition (1) implies that there is a lift of $\Sigma_i \hookrightarrow X$ to the universal cover $\Sigma_i \hookrightarrow \tilde{X}$. Call these lifts $\tilde{\Sigma}_0$ and $\tilde{\Sigma}_1$. Condition (2) implies that these lifts can be taken to be homologous in $\tilde{X}$. Since $\tilde{\Sigma}_0$ and $\tilde{\Sigma}_1$ are homologous, there is a 3-manifold $Y$ in $\tilde{X}\times I$ connecting them, and by condition 3, this $Y$ can be taken to have a simply connected complement. (Find manifolds $Y_1$ and $Y_2$ in $\tilde{X}\times I$ going from $\Sigma_0$ to $\overline{\Sigma}$ and $\overline{\Sigma}$ to $\Sigma_1$ respectively. The composition of these manifolds is $Y$, and since $\overline{\Sigma}$ has null-homotopic meridian, the complement of $Y$ will be simply connected).

Following exactly the proof of Theorem \ref{t:main}, we can modify $Y$ to be a concordance from $\tilde{\Sigma}_0$ to $\tilde{\Sigma}_1$ in $X \times I$. This descends to an immersed concordance in $X\times I$. The rest of the proof will be dedicated to showing that we can modify Y in such a way that it is disjoint from all of its deck transformations, and hence descends to an \emph{embedded} concordance in $X \times I$.

Let $\{\Sigma_{i,\alpha}\}_{\alpha \in \pi_1(X)}$ denote the lifts of the $\Sigma_i$ into $\tilde{X}$, and let $\{Y_{\alpha} \}_{\alpha \in \pi_1(X)}$ denote the collection of manifolds connecting $\Sigma_{0,\alpha}$ to $\Sigma_{1,\alpha}$. Each $Y_{\alpha}$ is embedded, (although the collection $\cup Y_{\alpha}$ may be immersed), and the deck transformations act transitively on the $Y_{\alpha}$

\begin{lemma}
The $Y_{\alpha}$ can be surgered (equivariantly) so that $\cup Y_{\alpha}$ is embedded in $\tilde{X}\times I$.

\end{lemma}

\begin{proof}

Suppose the collection $\cup Y_{\alpha}$ is immersed in $\tilde{X}\times I$. The double points of the immersed points will be a collection of $S^1$'s. Note that the complement of $\cup Y_{\alpha}$ is simply connected, so we can perform ambient surgery along $\cup Y_{\alpha}$. Choose a particular $Y_{\alpha}$, and perform ambient Dehn surgery to it (avoiding the double points, and avoiding the other components of $\cup Y_{\alpha}$) so that the $S^1$ double points in $Y_{\alpha}$ bound disjoint disks in $Y_{\alpha}$ (See Figure \ref{f:immersedsurgery}). Do this equivariantly to all of the $Y_{\alpha}$, so that all double points bound disjoint disks in some $Y_{\alpha}$. Let $D$ be one such disk in a given $Y_{\alpha}$ that has as its boundary the intersection between $Y_{\alpha}$ and $Y_{\alpha'}$. Choose a 2-plane subbundle of $N_XY_{\alpha}|_D$ that extends $N_{Y_{\alpha'}}\partial D$. The disk bundle of this 2-plane bundle specifies a surgery to $Y_{\alpha'}$ (similar to ambient 2-surgery). Such a surgery removes the double point set. This can be done with all of the double points (equivariantly on all of the $Y_{\alpha}$) because all of the double-points bound disjoint disks. See Figure \ref{f:resolveimmersion}.

\end{proof}

	\begin{figure}
\labellist
\small\hair 2pt
\pinlabel {$Y_{\alpha} $} at 23 80
\pinlabel {$Y_{\alpha'}$} at 155 79

\endlabellist	
\includegraphics{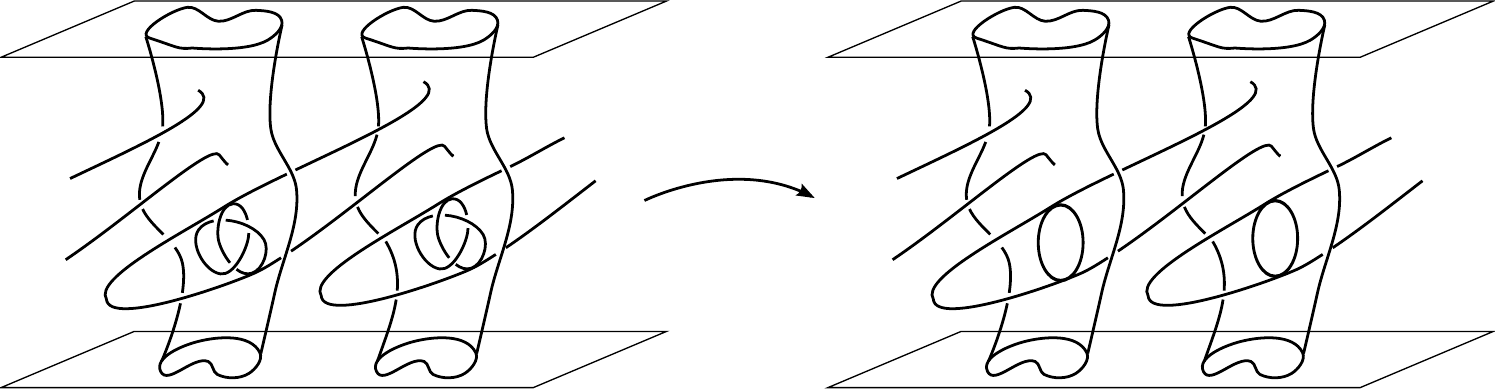}
	\caption{Unknotting the double-points}
	\label{f:immersedsurgery}
	\end{figure} 
 
 	\begin{figure}
\labellist
\small\hair 2pt
\pinlabel $Y_{\alpha'}$ at 49 80
\pinlabel $Y_{\alpha}$ at 28 36
\pinlabel {$N_{Y_{\alpha'}}\partial D$} at 162 14
\pinlabel $D$ at 99 49
\pinlabel {$N_X Y_{\alpha}|_D$} at 359 27
 \endlabellist
 	
\includegraphics{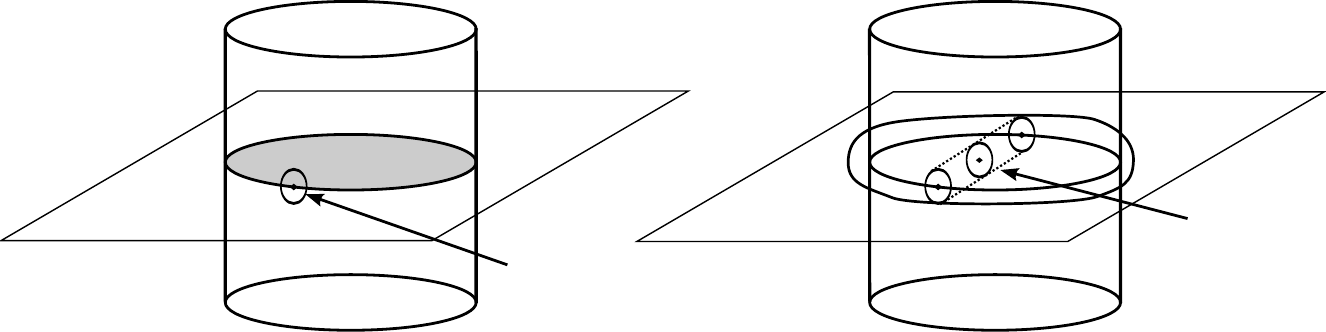}	
	\caption{Eliminating the double points.}
	\label{f:resolveimmersion}
	\end{figure}

Now that we have embedded $Y_{\alpha}$ on which the deck transformations act transitively, we can perform surgery (in a way that is equivariant under the deck transformations) to make the $Y_{\alpha}$ into concordances. Note that ambient 2-surgeries are supported in the neighborhood of 2-disks in a 5-manifold, therefore they can easily be made disjoint from their deck transformations. The $Y_{\alpha}$ then descend to a concordance in $X\times I$.

\end{proof}

\section{Surfaces up to isotopy}\label{s:isotopy}

In this section we would like to investigate when two homologous surfaces are more than concordant. We want to decide when they will actually be topologically isotopic. We'll see that this is often true when the complement of each surface has cyclic fundamental group. In everything that follows, $\Sigma_0$ and $\Sigma_1$ are locally-flat embedded surfaces of the same genus. There are three cases which will have somewhat different features: the simply connected case ($[\Sigma_i]$ is primitive), the finite cyclic case ($[\Sigma_i]$ is non-trivial), and the infinite cyclic case ($[\Sigma_i]=0$).

\begin{theorem}\label{t:iso1}
If X is a simply connected manifold, and $\Sigma_0$ and $\Sigma_1$ are in the same homology class, and moreover $\pi_1(X - \Sigma_i) =0$, then the surfaces are topologically isotopic.
\end{theorem}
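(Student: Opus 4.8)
\textbf{Proof plan for Theorem \ref{t:iso1}.}

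The plan is to upgrade the concordance produced by Theorem \ref{t:main} to an ambient isotopy by running the usual h-cobordism machine on the \emph{complement} of the concordance. First I would invoke Theorem \ref{t:main} to obtain a smooth concordance $Y \cong \Sigma \times I$ embedded in $X \times I$ with $\partial Y = \Sigma_0 \times \{0\} \sqcup \Sigma_1 \times \{1\}$. Set $W = (X \times I) \smallsetminus \nu Y$, a cobordism (with corners along $\partial \nu Y$) from $W_0 = (X \smallsetminus \nu \Sigma_0) \times \{0\}$ to $W_1 = (X \smallsetminus \nu \Sigma_1) \times \{1\}$, with the vertical part of the boundary the product $\partial \nu Y \cong (\Sigma \times S^1) \times I$. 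Because $Y$ is a product concordance, $\nu Y$ is the product $\nu \Sigma_0 \times I$, so gluing $\nu \Sigma_0 \times I$ back onto $W$ along the vertical boundary recovers $X \times I$; thus an isotopy from $\Sigma_0$ to $\Sigma_1$ is equivalent to a product structure on $W$ rel the vertical boundary, i.e.\ to showing $W$ is a trivial (topological) h-cobordism.

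The key steps, in order: (i) Check that $W$ is an h-cobordism, i.e.\ that $W_0 \hookrightarrow W$ and $W_1 \hookrightarrow W$ are homotopy equivalences. The $\pi_1$ statement is immediate from the hypothesis $\pi_1(X - \Sigma_i) = 0$ together with the fact (essentially Proposition \ref{p:pi1surgery2}, applied to the dual handle decomposition of $W$ coming from a Morse function on $(Y,\Sigma_0)$, which has only handles of index $\le 2$, hence contributes only $(m{+}1)$-handles with $m{+}1 \le 3$ to the $5$-manifold $W$) that $\pi_1(W) \cong \pi_1(W_0) \cong \pi_1(W_1) = 1$; note both complements being simply connected forces $[\Sigma_i]$ primitive so the meridian normally generates and everything is consistent. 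For homology, use that $X \times I - Y$ and $X \times \{i\} - \Sigma_i$ have the homology of the respective complements, and a Mayer--Vietoris / Alexander duality comparison (the homology of a codimension-$2$ complement is determined by the homology class and genus, which agree) shows the inclusions are homology isomorphisms; with simple $\pi_1$, Whitehead gives homotopy equivalences. (ii) Since $W_0 = (X - \nu\Sigma_0) \times \{0\}$ is a compact simply connected $4$-manifold with boundary, $W$ is a simply connected topological h-cobordism of dimension $5$, so by Freedman's topological h-cobordism theorem $W \cong W_0 \times I$, and the product structure can be taken to restrict to the obvious product on the vertical boundary $\partial \nu Y \times I$ (the relative h-cobordism theorem rel a codimension-$0$ piece of the boundary, valid topologically by Freedman--Quinn). (iii) Re-glue $\nu \Sigma_0 \times I$ to get an ambient isotopy of $X$ carrying $\Sigma_0$ to $\Sigma_1$; since ambient isotopy of surfaces in a $4$-manifold is the same as topological isotopy, we are done. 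For the locally flat case one works with the smooth structure in the complement of a point (Section \ref{notation}) or directly in the topological category throughout.

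The main obstacle is step (ii): verifying that Freedman's $5$-dimensional topological h-cobordism theorem applies \emph{rel the vertical boundary} and with the right control near the corners $\partial \nu Y$. Ordinary high-dimensional h-cobordism arguments give this rel a boundary component for free, but here the ``boundary component'' $W_0$ is itself a manifold with boundary and the h-cobordism is a manifold with corners; one must either smooth the corners and track that the product structure extends over the collar of $\partial \nu Y$, or cite the version of the topological s-cobordism theorem for manifolds with boundary (as in Freedman--Quinn, using that $\pi_1$ of everything in sight is trivial so Whitehead torsion vanishes). A secondary point requiring care is confirming that the product concordance hypothesis really does identify $\nu Y$ with $\nu \Sigma_0 \times I$ compatibly, so that re-gluing reconstructs $X \times I$ and not some twisted version; this is where the precise meaning of ``$\Sigma_0$ and $\Sigma_1$ concordant'' (an honest product $\Sigma \times I$, with the normal framing carried along) is used.
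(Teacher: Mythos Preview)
Your outline is essentially the paper's own argument---concordance from Theorem~\ref{t:main}, complement is a simply connected $h$-cobordism, Freedman makes it a product---but step~(iii) hides a genuine gap. Re-gluing $\nu\Sigma_0\times I$ to the product $W\cong W_0\times I$ does \emph{not} produce an ambient isotopy of $X$: it produces a homeomorphism $F\colon X\times I\to X\times I$ which is the identity on $X\times\{0\}$ and restricts on $X\times\{1\}$ to some homeomorphism $f\colon (X,\Sigma_0)\to(X,\Sigma_1)$. Nothing forces $F$ to be level-preserving, so what you have is a \emph{pseudo-isotopy} from $\mathrm{id}_X$ to $f$ (equivalently, a homeomorphism of pairs), not an isotopy. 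Passing from pseudo-isotopy to isotopy in dimension~$4$ is a deep theorem of Quinn \cite{Quinn}, and this is exactly the step the paper makes explicit: one checks that $f$ acts as the identity on $H_2(X)$ (immediate, since $F$ restricts to the identity on $X\times\{0\}$ and both ends carry the homology of $X\times I$), and then Quinn's theorem says any such self-homeomorphism of a closed simply connected $4$-manifold is topologically isotopic to the identity, whence $\Sigma_0$ is isotopic to $f(\Sigma_0)=\Sigma_1$.

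By contrast, the issue you flag as the ``main obstacle''---the relative form of Freedman's $5$-dimensional $h$-cobordism theorem, rel the vertical boundary $\partial\nu Y$---is routine: the vertical boundary is already an honest product $(\Sigma\times S^1)\times I$, $\pi_1$ vanishes everywhere so Whitehead torsion is no issue, and the relative topological $s$-cobordism theorem in \cite{FQ} applies directly. (Your $\pi_1(W)=0$ check is also slightly off: the dual handle decomposition of $W$ built from a Morse function on the \emph{embedded} $Y$ can have handles of all indices $1$ through $4$, since the embedding need not be level-preserving; the clean argument is that the concordance produced in Theorem~\ref{t:main} has cyclic $\pi_1$ of complement generated by a meridian, and that meridian is already null-homotopic in $X-\Sigma_0$, hence in $W$.) So your plan is the right one, but the endgame needs Quinn, not just a re-gluing.
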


\begin{proof}
By Theorem \ref{t:main}, $\Sigma_0$ and $\Sigma_1$ must be concordant, and by Meyer-Vietoris the complement of this concordance must be an h-cobordism. Since this h-cobordism is simply connected, by Freedman's theorem \cite{FQ}, it is a product, and we can conclude that $X-\nu\Sigma_0$ is homeomorphic to $X-\nu\Sigma_1$. This can be extended to give a homeomorphism of pairs from $(X,\Sigma_0)$ to $(X,\Sigma_1)$, which moreover induces the identity map on homology. By a theorem of Quinn, such a homeomorphism must be isotopic to the identity (\cite{Quinn}).

\end{proof}

The next simplest case is when $\pi_1(X - \Sigma_i) = \mathbb{Z}$. 

\begin{theorem}\label{t:infiso}
Let $X$ be a simply connected manifold with $b_2 \geq |\sigma| + 6$, and let $\Sigma_0$ and $\Sigma_1$ be embedded surfaces in the same homology class with $\pi_1(X- \Sigma_i) = \mathbb{Z}$. Then the surfaces are topologically isotopic. 
\end{theorem}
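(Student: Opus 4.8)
The plan is to follow the same strategy as in Theorem~\ref{t:iso1}, using concordance plus a surgery-theoretic argument to upgrade the h-cobordism (now with infinite cyclic fundamental group) to a product, and then to conclude that the complements, hence the pairs, are homeomorphic. First, by Theorem~\ref{t:main} the surfaces $\Sigma_0$ and $\Sigma_1$ are (locally flat) concordant, via some concordance $Y = \Sigma\times I \subset X\times I$. Let $W = (X\times I)\- \nu Y$. A Mayer--Vietoris argument, together with the fact that $\pi_1(X-\Sigma_i)=\Z$ is detected by the meridian (which survives in $W$ on both ends), shows that $W$ is an h-cobordism from $X\-\nu\Sigma_0$ to $X\-\nu\Sigma_1$ with $\pi_1(W)=\Z$. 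The difficulty is that h-cobordisms with $\pi_1=\Z$ need not be products even topologically; one needs an s-cobordism (or at least a Whitehead-torsion / surgery-obstruction) argument, and $Wh(\Z)=0$ handles the torsion, but the genuine obstacle is that these are not \emph{closed} manifolds --- they have boundary $\partial\nu\Sigma_i = $ an $S^1$-bundle over $\Sigma_i$ --- so one works rel boundary, and one must verify the boundary h-cobordism is already a product, which it is since $\partial\nu\Sigma_0\cong\partial\nu\Sigma_1$ (same genus, same Euler number).

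The main work, and the reason for the hypothesis $b_2\geq|\sigma|+6$, is to run the topological s-cobordism theorem in dimension~5 with $\pi_1=\Z$. Freedman--Quinn's s-cobordism theorem applies to topological 5-dimensional s-cobordisms with \emph{good} fundamental group, and $\Z$ is good; but to actually cancel handles one needs enough algebraic room --- the relevant intersection form on the middle-dimensional handles (a form over $\Z[\Z]=\Z[t,t^{-1}]$) must be realized geometrically by embedded (not merely immersed) spheres with algebraically dual spheres, and this stabilization-type hypothesis on $b_2$ guarantees the ambient manifold $X\-\nu\Sigma_i$ has enough $S^2\times S^2$-summands (after accounting for the signature, which eats up $|\sigma|$ of the $b_2$, and a fixed constant $6$ for the geometric Casson-handle / sphere-embedding input). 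Concretely, I would cite the relevant form of the topological s-cobordism theorem and its handle-cancellation consequences (Freedman--Quinn \cite{FQ}), check that $Wh(\Z)=0$ kills the torsion obstruction so the h-cobordism $W$ is an s-cobordism, and verify the $b_2$ count that lets the middle handles be cancelled.

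Once $W$ is shown to be a product rel boundary, I get a homeomorphism $X\-\nu\Sigma_0 \to X\-\nu\Sigma_1$ restricting to the obvious identification on $\partial\nu\Sigma_i$. Extending over the tubular neighborhoods (which are determined by the genus and the self-intersection number, hence diffeomorphic) gives a homeomorphism of pairs $(X,\Sigma_0)\to(X,\Sigma_1)$ which can be arranged to induce the identity on $H_*(X)$. Finally, as in the proof of Theorem~\ref{t:iso1}, Quinn's theorem \cite{Quinn} on isotopy of homeomorphisms of simply connected 4-manifolds that are homologically trivial lets me isotope this homeomorphism to the identity, which carries $\Sigma_1$ to $\Sigma_0$; that is exactly a topological isotopy between the two surfaces.

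\textbf{Expected main obstacle.} The crux is purely the 5-dimensional surgery/s-cobordism bookkeeping with $\pi_1=\Z$: checking that $W$ is an s-cobordism (torsion vanishing) and, above all, that the $b_2\geq|\sigma|+6$ hypothesis supplies exactly the number of hyperbolic summands needed to geometrically cancel the middle-dimensional handles over $\Z[\Z]$ via the topological (good fundamental group) techniques of Freedman--Quinn. The concordance input, the Mayer--Vietoris identification of $\pi_1(W)$, and the final extension-and-Quinn step are comparatively routine.
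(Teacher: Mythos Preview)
Your proposal has a genuine gap at the very first nontrivial step: the claim that the concordance complement $W=(X\times I)\setminus\nu Y$ is an h-cobordism. The Mayer--Vietoris argument you invoke only computes $H_*(W,X\setminus\nu\Sigma_0;\Z)$, and vanishing there is \emph{not} the h-cobordism condition once $\pi_1\neq 0$. You would need $H_*(W,X\setminus\nu\Sigma_0;\Z[\Z])=0$, i.e.\ vanishing in the infinite cyclic cover, and there is no Mayer--Vietoris decomposition of a cover of the simply connected $X\times I$ that yields this. Concretely, the concordance (even one with $\pi_1$ of the complement equal to $\Z$) can carry nontrivial Alexander-type information, so the chain complex of $2$- and $3$-handles need not be acyclic over $\Z[t,t^{-1}]$. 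This is exactly why the paper does \emph{not} attempt to prove $W$ is an h-cobordism.

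A symptom of this gap is your account of the hypothesis $b_2\geq|\sigma|+6$. If $W$ really were an s-cobordism with $\pi_1=\Z$, then since $\Z$ is good and $Wh(\Z)=0$, Freedman--Quinn would give a product with no further hypothesis whatsoever; the ``room to cancel handles'' in a $5$-dimensional s-cobordism is supplied by the Whitney trick inside $W$, not by $S^2\times S^2$ summands of the ends. The paper's argument is quite different: from the specific concordance built in Theorem~\ref{t:main} one extracts a handle decomposition of $W$ with only $2$- and $3$-handles attached along null-homotopic circles, giving a \emph{stable} homeomorphism $(X\setminus\nu\Sigma_0)\#nS^2\times S^2\cong(X\setminus\nu\Sigma_1)\#nS^2\times S^2$. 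The hypothesis $b_2\geq|\sigma|+6$ is then used to arrange that $X\setminus\nu\Sigma_0$ already splits off three $S^2\times S^2$ summands (by taking $\Sigma_0$ to be the standard unknotted surface in an $X'$ with $X=X'\#3S^2\times S^2$); with at least three hyperbolics present, Hambleton--Teichner's transitivity result for hyperbolic pairs over $\Z[\Z]$ together with Cappell--Shaneson's geometric realization lets one destabilize to a homeomorphism $X\setminus\nu\Sigma_0\cong X\setminus\nu\Sigma_1$. Only after that does the Quinn step you describe apply. So the role of $b_2\geq|\sigma|+6$ is algebraic cancellation of hyperbolic forms over $\Z[\Z]$, not geometric input to an s-cobordism theorem.
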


\begin{proof}

Without loss of generality, we can assume that $\Sigma_0$ bounds a solid handlebody in $X$, that is $\Sigma_0$ is a standardly embedded unknotted surface. Our fundamental group assumption implies that both $\Sigma_0$ and $\Sigma_1$ are null-homologous. Therefore by Theorem \ref{t:main} there is a concordance $Y \subset X\times I$ connecting the surfaces. The concordance $Y$ constructed has $\pi_1(X\times I - Y) = \mathbb{Z}$. This follows from Propositions \ref{p:1surgery} and \ref{p:pi1surgery2}. 

Consequently $\pi_1(X\times \{i\} - \Sigma_i)\longrightarrow \pi_1(X\times I - Y)$ is an isomorphism, and we can conclude that $X\times I - \nu Y$ has a handle decomposition with only 2 and 3-handles. This follows from the standard handle cancellation proof of the s-cobordism theorem. Moreover, the 2-handles must be attached along null homotopic loops in $X- \nu \Sigma_0$ (otherwise the map on the fundamental group would be only a surjection, not an isomorphism).

Turning the handlebody picture upside down, we have a handlebody whose 2-handles are attached along null-homotopic loops in  $X- \nu \Sigma_0$. By looking at the middle level of this cobordism, we conclude that $(X- \nu \Sigma_0) \# nS^2\times S^2$ is homeomorphic to $(X- \nu \Sigma_1) \# nS^2\times S^2$. 

It only remains to see that we can cancel off the extra $S^2\times S^2$'s. First we'll see this on the level of $\pi_2$. Since $b_2 \geq \sigma + 6$, $X$ is of the form $X' \# 3S^3\times S^2$. Therefore, since $\Sigma_0$ bounds a solid handlebody, we can assume it lies in $X'$ and therefore $X - \nu \Sigma_0$ is of the form $M \# 3S^2\times S^2$. This has equivariant intersection form of the form $\lambda \oplus 3H$ where $\lambda$ is the intersection form on $M$ and $H$ is the hyperbolic form. Therefore the equivariant intersection form on $X- \nu\Sigma_i \# nS^2\times S^2$ is $\lambda \oplus (3+n)H$. In \cite{HT}, it is explained that, as long as an intersection form over $\mathbb{Z}[\mathbb{Z}]$ has at least 3 hyperbolics, then the automorphism group of the intersection form acts transitively on hyperbolic pairs. Moreover, in \cite{CS} such automorphisms are seen to be realized by homeomorphisms (as long as there is at least one $S^2\times S^2$ summand). Therefore we can cancel off the extra $S^2\times S^2$'s. 

\end{proof}

Finally we will consider the case where $\pi_1(X - \Sigma_i)$ is finite cyclic. This case will require the most subtle conditions. The case of spheres was shown by Lee and Wilczynski and by Hambleton and Kreck with the additional assumption that $b_2(X)>|\sigma(X)|+2$. 

\begin{theorem}[\cite{LWsph}, \cite{HK}]
If X is a simply connected manifold  and $\Sigma_1$ and $\Sigma_2$ are embedded spheres in the same homology class, and moreover $\pi_1(X - \Sigma_i)$ is cyclic, then the surfaces are topologically isotopic.
\end{theorem}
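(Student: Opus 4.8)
The plan is to run the same strategy used above for Theorem~\ref{t:infiso}, now carrying everything over the group ring $\mathbb{Z}[\mathbb{Z}_n]$; this recovers the theorem of Lee--Wilczynski and Hambleton--Kreck (\cite{LWsph},\cite{HK}). First I would apply Theorem~\ref{t:main} to produce a locally flat concordance $Y \cong S^2 \times I \subset X \times I$ from $\Sigma_1$ to $\Sigma_2$, built by the recipe of Propositions~\ref{p:1surgery} and~\ref{p:pi1surgery2}: make $\pi_1$ of the complement cyclic by ambient $1$-surgery, then preserve it while performing the ambient $2$-surgeries that turn the Thom membrane into a concordance. This gives $\pi_1(X\times I - Y)$ cyclic, generated by the meridian, and since $H_1(X\times I - Y) \cong H_1(X-\Sigma_i) = \mathbb{Z}_n$ by Mayer--Vietoris (using $X$ simply connected), in fact $\pi_1(X\times I - Y) = \mathbb{Z}_n$ with the inclusions of the two ends being $\pi_1$-isomorphisms. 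As in the proof of Theorem~\ref{t:iso1}, a Mayer--Vietoris computation shows the pair $(X\times I-\nu Y,\, X-\nu\Sigma_i)$ is acyclic with $\mathbb{Z}[\mathbb{Z}_n]$ coefficients, so $W := X\times I - \nu Y$ is a topological $h$-cobordism from $X-\nu\Sigma_1$ to $X-\nu\Sigma_2$ (rel its lateral boundary, the $S^1$-bundle over $S^2$ of Euler number $[\Sigma_i]^2$) with fundamental group $\mathbb{Z}_n$.

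The group $\mathbb{Z}_n$ is finite, hence good in the sense of Freedman--Quinn, so topological handle calculus is available in this dimension. But $\mathrm{Wh}(\mathbb{Z}_n)$ need not vanish, so $W$ need not be an $s$-cobordism, and unlike the $\pi_1=0$ case of Theorem~\ref{t:iso1} we cannot conclude it is a product. Instead I would trade handles in the standard way to present $W$ as $(X-\nu\Sigma_1)\times I$ with only $2$- and $3$-handles attached in the middle. Since the inclusion of either end is a $\pi_1$-isomorphism, the attaching circles of the $2$-handles are null-homotopic in $X-\nu\Sigma_1$; attaching a $2$-handle along such a circle stabilizes by an $S^2$-bundle over $S^2$, and, controlling $w_2$ exactly as in the proof of Theorem~\ref{t:infiso}, we may take these summands to be copies of $S^2\times S^2$. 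Reading off the middle level of $W$ then produces a homeomorphism $(X-\nu\Sigma_1)\#\, k(S^2\times S^2)\cong (X-\nu\Sigma_2)\#\, k(S^2\times S^2)$ for some $k$.

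It remains to cancel the stabilizing summands. Normalizing $\Sigma_1$ to a standardly embedded sphere, its complement already splits off hyperbolic/$S^2\times S^2$ summands, and the hypothesis $b_2 > |\sigma| + 2$ guarantees enough of them that the equivariant intersection form over $\mathbb{Z}[\mathbb{Z}_n]$ of each side is $\lambda \oplus (m+k)H$ with $m$ large enough for the automorphism group of the form to act transitively on hyperbolic pairs; here I invoke the cancellation and transitivity results for forms over $\mathbb{Z}[\mathbb{Z}_n]$ (Hambleton--Teichner, \cite{HT}) together with the fact that the relevant automorphisms are realized by homeomorphisms in the indefinite/stable range (Cappell--Shaneson \cite{CS}, and the homeomorphism classification of $4$-manifolds with finite cyclic $\pi_1$, \cite{HK}). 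This yields a homeomorphism $X-\nu\Sigma_1 \cong X-\nu\Sigma_2$; because both boundaries are canonically the $S^1$-bundle over $S^2$ of Euler number $[\Sigma_i]^2$, it extends across the tubular neighborhoods to a homeomorphism of pairs $(X,\Sigma_1)\to(X,\Sigma_2)$ which (after composing with a self-homeomorphism of $X$ if needed) induces the identity on $H_*(X)$. Quinn's isotopy theorem (\cite{Quinn}) then makes this homeomorphism isotopic to the identity, so $\Sigma_1$ and $\Sigma_2$ are topologically isotopic.

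The genuine obstacle is the content of the third paragraph: controlling the Whitehead torsion of the $h$-cobordism and carrying out the $S^2\times S^2$ cancellation equivariantly over $\mathbb{Z}[\mathbb{Z}_n]$, which is precisely where the finite-group surgery machinery and the hypothesis $b_2 > |\sigma| + 2$ are essential; everything preceding it is a formal consequence of Theorem~\ref{t:main} and the handle calculus of Sections~\ref{s:surgery1}--\ref{s:surgery2}. I should note that Lee--Wilczynski and Hambleton--Kreck take a more direct route for spheres, bypassing the concordance altogether: the complements $X-\nu\Sigma_i$ have identical boundary, $\pi_1$, and equivariant intersection form (all determined by $[\Sigma_i]$ and the requirement $\pi_1 = \mathbb{Z}_n$), hence are homeomorphic rel boundary directly by that same classification — an argument that does not obviously generalize to higher genus, which is why the concordance-based approach above is the one that adapts to Theorem~\ref{t:infiso}.
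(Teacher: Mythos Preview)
The paper does not prove this theorem; it is stated as a cited result of Lee--Wilczy\'nski \cite{LWsph} and Hambleton--Kreck \cite{HK}, with the surrounding text noting the additional hypothesis $b_2(X) > |\sigma(X)| + 2$ and remarking only that ``we could show something similar for spheres following our same methods'' before moving on to Theorem~\ref{t:iso3} for positive genus. So there is no paper's proof to compare against; your proposal is precisely an attempt to carry out that remark, and its outline does parallel the paper's argument for Theorem~\ref{t:iso3}.

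Two points of imprecision in your sketch. First, the cancellation of $S^2\times S^2$ summands over $\mathbb{Z}[\mathbb{Z}_n]$ should be attributed to Hambleton--Kreck \cite{HK}, exactly as the paper does in the proof of Theorem~\ref{t:iso3}; the Hambleton--Teichner reference \cite{HT} you invoke is specifically about forms over $\mathbb{Z}[\mathbb{Z}]$ and is used by the paper only in the $\pi_1 = \mathbb{Z}$ case of Theorem~\ref{t:infiso}. Second, your normalization step --- arranging that $X - \nu\Sigma_1$ splits off an $S^2\times S^2$ summand --- needs justification for spheres. In Theorem~\ref{t:iso3} the paper gets this from the strictly-greater-than-minimal-genus hypothesis together with Lee--Wilczy\'nski's existence formula and Wall's theorem \cite{Wallsph}; for genus zero you cannot drop genus, so you need instead to know that the homology class is already represented by a sphere (with cyclic complement) in $X'$ where $X = X' \# S^2\times S^2$, which is again supplied by \cite{LWsph} under $b_2 > |\sigma| + 2$. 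Your closing observation is correct: the original proofs in \cite{LWsph} and \cite{HK} bypass concordance entirely and work directly with the classification of $4$-manifolds with finite cyclic fundamental group.
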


We will focus on the case of genus greater than 0, but we could show something similar for spheres following our same methods. Our results will depend on the minimal genus of a surfaces. Lee and Wilczynski have shown in \cite{LWsurf} that given $x\in H_2(X)$, there is a topologically locally flat surface $\Sigma$ representing $x$ by an oriented surface of genus $g$ whose complement has cyclic fundamental group if and only if
\begin{enumerate}
\item $KS(X) = \frac{1}{8}[\sigma(X)- x^2] (mod 2)$ when $g=0$ and $x$ is characteristic, and
\item $b_2+2g \geq max_{0\leq j < d} |\sigma(X)-2j(d-j)(1/d^2)x^2|$ where $d$ is the divisibility of $x$.
\end{enumerate}

\begin{theorem}\label{t:iso3}
Let $X$ be a simply connected manifold with $b_2(X)>|\sigma(X)|+2$, and let $\Sigma_0$ and $\Sigma_1$ be embedded surfaces in the same homology class. If moreover $\pi_1(X-  \Sigma_i)$ is finite cyclic, and the surfaces have genus strictly greater than the minimum possible genus of such surfaces, then the surfaces are topologically isotopic.
\end{theorem}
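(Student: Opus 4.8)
The plan is to follow the same architecture as the proof of Theorem~\ref{t:infiso}, replacing the infinite cyclic group $\mathbb{Z}$ by the finite cyclic group $\mathbb{Z}_n$ and being careful about two new features: the homology class is now a nonzero torsion-divisibility-$d$ class (so the surfaces are no longer null-homologous and do not bound handlebodies), and the genus hypothesis is what lets us produce a convenient reference surface. First I would fix a ``model'' surface $\Sigma'$ of the same genus and homology class whose complement has fundamental group $\mathbb{Z}_n$; such a $\Sigma'$ exists by the Lee--Wilczynski realization result (the genus and $b_2$ inequalities are exactly conditions (1)--(2) above), and since the genus of $\Sigma_0$ strictly exceeds the minimum, $\Sigma'$ can in fact be taken to be a ``stabilized'' surface — one that is a standard surface connect-summed with a trivial handle pair inside a chart, or more usefully one lying in a $\# S^2\times S^2$ summand — which is what will allow the $S^2\times S^2$'s to be absorbed at the end. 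It suffices, by transitivity, to show each of $\Sigma_0,\Sigma_1$ is isotopic to $\Sigma'$, so rename and assume $\Sigma_1=\Sigma'$ is the stabilized model.

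Next I would run the concordance machinery: by Theorem~\ref{t:main} there is a concordance $Y\subset X\times I$ from $\Sigma_0$ to $\Sigma_1$, and by Propositions~\ref{p:1surgery} and~\ref{p:pi1surgery2} the construction can be arranged so that $\pi_1(X\times I-\nu Y)=\mathbb{Z}_n$, with both inclusions $\pi_1(X\times\{i\}-\nu\Sigma_i)\to\pi_1(X\times I-\nu Y)$ isomorphisms. A Mayer--Vietoris / van Kampen argument then shows $W:=X\times I-\nu Y$ is an $s$-cobordism (or at least a homology $h$-cobordism with the right $\pi_1$) between $X-\nu\Sigma_0$ and $X-\nu\Sigma_1$; running the standard handle-cancellation argument over $\mathbb{Z}[\mathbb{Z}_n]$ gives a handle decomposition of $W$ with only $2$- and $3$-handles, attached along null-homotopic loops (since the $\pi_1$ maps are isomorphisms, not merely surjections). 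Looking at the middle level yields a homeomorphism $(X-\nu\Sigma_0)\,\#\,kS^2\times S^2\;\cong\;(X-\nu\Sigma_1)\,\#\,kS^2\times S^2$ for some $k$.

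The final and most delicate step is to cancel the stabilizing $S^2\times S^2$ summands, and this is where the hypotheses $b_2>|\sigma|+2$ and the strict genus inequality are used. Because $b_2>|\sigma|+2$, $X$ contains an $S^2\times S^2$ summand, and because the genus of $\Sigma_1$ exceeds the minimal genus we may arrange $X-\nu\Sigma_1\cong M\,\#\,S^2\times S^2$ with $M$ simply connected (the extra genus ``hides'' in a standard piece disjoint from the interesting part of the surface; alternatively one sees this stabilization directly from the Lee--Wilczynski construction of $\Sigma_1$). Then the equivariant intersection form of $X-\nu\Sigma_i\,\#\,kS^2\times S^2$ over $\mathbb{Z}[\mathbb{Z}_n]$ has at least one hyperbolic summand coming from the genuine $S^2\times S^2$ in $X-\nu\Sigma_1$ plus $k$ more from stabilization; invoking the cancellation/transitivity result on the automorphism group of such forms (cf.\ \cite{HT} adapted to $\mathbb{Z}[\mathbb{Z}_n]$, and its realization by homeomorphisms as in \cite{CS}) lets us strip off the extra hyperbolics and conclude $X-\nu\Sigma_0\cong X-\nu\Sigma_1$ rel boundary. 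Extending over the normal bundles gives a homeomorphism of pairs $(X,\Sigma_0)\cong(X,\Sigma_1)$ inducing the identity on homology, and Quinn's isotopy theorem \cite{Quinn} upgrades this to a topological isotopy. The main obstacle is precisely this last step: the cancellation theory for hermitian forms over $\mathbb{Z}[\mathbb{Z}_n]$ is more subtle than over $\mathbb{Z}[\mathbb{Z}]$ (finite groups bring in $K$- and $L$-theoretic obstructions), and it is the strict genus hypothesis — guaranteeing a ``spare'' $S^2\times S^2$ already present before stabilization — that makes the relevant transitivity statement available.
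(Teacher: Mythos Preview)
Your architecture matches the paper's almost exactly: choose a model surface sitting in an $S^2\times S^2$ summand (the paper does this by writing $X=X'\#S^2\times S^2$, moving the homology class into $X'$ via Wall's theorem~\cite{Wallsph}, and then invoking Lee--Wilczy\'nski to realize the class by a surface $S\subset X'$ of the given genus, so $X-\nu S=(X'-\nu S)\#S^2\times S^2$); build a concordance with cyclic complement; strip to $2$- and $3$-handles to get a stable homeomorphism; cancel the extra $S^2\times S^2$'s; then apply Quinn.

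The one genuine gap is your cancellation step. You invoke ``\cite{HT} adapted to $\mathbb{Z}[\mathbb{Z}_n]$'' together with \cite{CS}, but \cite{HT} is specifically about $\mathbb{Z}[\mathbb{Z}]$ and there is no straightforward adaptation; as you yourself note, the $K$- and $L$-theory over $\mathbb{Z}[\mathbb{Z}_n]$ is genuinely harder. The paper instead appeals directly to the Hambleton--Kreck cancellation theorem \cite{HK}: for manifolds with finite cyclic fundamental group, if $A\#kS^2\times S^2\cong B\#kS^2\times S^2$ and $A$ already splits off an $S^2\times S^2$, then $A\cong B$. That is the black box you need, and it is precisely why the model surface must be arranged so that $X-\nu\Sigma_0$ has an honest $S^2\times S^2$ summand. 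The paper also uses \cite[Prop.~4.2]{HK} to guarantee the resulting homeomorphism acts trivially on $H_2(X)$ before invoking Quinn, a point you asserted without justification.

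Two small slips: the concordance complement $W$ is \emph{not} an $s$-cobordism in general (if it were, the topological $s$-cobordism theorem over the good group $\mathbb{Z}_n$ would finish the proof immediately with no need for cancellation); one only uses handle-trading to eliminate low- and high-index handles. And in your splitting $X-\nu\Sigma_1\cong M\#S^2\times S^2$, the summand $M$ has $\pi_1=\mathbb{Z}_n$, not trivial $\pi_1$.
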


\begin{proof}

Since $b_2(X)>|\sigma(X)|+2$, X is of the form $X = X' \# S^2\times S^2$ for some (topological) simply connected 4-manifold $X'$. By a theorem of Wall (\cite{Wallsph}), we can assume that $[\Sigma_i]$ is of the form $\alpha + 0 \in H_2(X') \oplus H_2(S^2\times S^2)$. By the theorem of Lee and Wilsczynski and our assumption on the genus of the $\Sigma_i$, we conclude there is a surface $S$ in $X'$ with the same genus as the $\Sigma_i$ such that $[S] = [\Sigma_i]$ in $X' \# S^2\times S^2$. Without loss of generality, we can assume $\Sigma_0 = S$.

By repeating the arguments of Theorem \ref{t:infiso} (by finding a concordance, and then analyzing the handles of the complement), we can conclude that $(X- \nu\Sigma_0) \# nS^2\times S^2$ is homeomorphic to $(X- \nu\Sigma_1) \# nS^2\times S^2$.

Now, a theorem of Hambleton and Kreck in \cite{HK} says that such stably homeomorphic manifolds are homeomorphic as long as $X- \nu\Sigma_0$ is of the form $Y \# S^2\times S^2$. But this is true by our assumption on $\Sigma_0$. Therefore $X - \nu\Sigma_0$ is homeomorphic to $X - \nu\Sigma_1$, and by extension, we have a homeomorphism of pairs $(X,\Sigma_0)\longrightarrow(X,\Sigma_1)$. Hambleton and Kreck further show \cite[Prop 4.2]{HK} that this homeomorphism can be taken to induce the identity on $H_2(X)$, and therefore, by a theorem of Quinn \cite{Quinn} the homeomorphism is isotopic to the identity.  

\end{proof}

We can also exhibit a version when $X$ is not simply connected.

\begin{theorem}
Let $\Sigma_0$ and $\Sigma_1$ be two surfaces which are locally flat embedded in a not-necessarily simply connected 4-manifold X. Suppose further that the following are satisfied:
\begin{itemize}
\item $\pi_1(\Sigma_i) \longrightarrow \pi_1(X)$ is trivial for $i=0,1$.
\item $[\Sigma_0]= [\Sigma_1]$ in $H_2(X,\mathbb{Z}[\pi_1])$.
\item The meridian to $\Sigma_i$ is null-homotopic in $X - \Sigma_i$ for $i=0,1$
\item $\pi_1(X)$ is a good group in the sense of Freedman-Quinn, \cite{FQ}.
\end{itemize}
Then there is a homeomorphism of pairs from $(X,\Sigma_1)$ to $(X,\Sigma_2)$.
\end{theorem}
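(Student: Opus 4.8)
The plan is to run the proof of Theorem \ref{t:iso1} essentially verbatim, substituting Theorem \ref{t:pi1conc} for Theorem \ref{t:main} and the Freedman--Quinn topological $s$-cobordism theorem (available because $\pi_1(X)$ is good) for Freedman's simply connected $h$-cobordism theorem; as throughout this section, $\Sigma_0$ and $\Sigma_1$ are taken to have the same genus. The three bulleted hypotheses are exactly what Theorem \ref{t:pi1conc} requires, with the auxiliary surface $\overline\Sigma$ of that theorem played by $\Sigma_0$ itself (its meridian being null-homotopic by the third hypothesis). So Theorem \ref{t:pi1conc} produces a locally flat concordance $Y = \Sigma\times I\subset X\times I$ from $\Sigma_0$ to $\Sigma_1$. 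Since the meridian of $Y$ restricts on either end to the meridian of $\Sigma_i$, which is null-homotopic in $X-\nu\Sigma_i$, van Kampen gives $\pi_1(X-\nu\Sigma_i)\cong\pi_1(X)$ and $\pi_1(X\times I-\nu Y)\cong\pi_1(X)$, with every inclusion-induced map an isomorphism.

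Next I would show, exactly as in Theorem \ref{t:iso1} but now with $\mathbf{Z}[\pi_1(X)]$ coefficients (equivalently, working in the universal cover, where the complement of the lifted concordance is simply connected), that $W := X\times I - \nu Y$ is an $h$-cobordism from $X-\nu\Sigma_0$ to $X-\nu\Sigma_1$: comparing the Mayer--Vietoris sequences of $X\times I = W\cup_{\partial\nu Y}\nu Y$ and $X\times\{0\} = (X-\nu\Sigma_0)\cup\nu\Sigma_0$, and using that $\nu Y\cong\nu\Sigma_0\times I$ is a product, shows $H_*(W, X-\nu\Sigma_i;\mathbf{Z}[\pi_1(X)]) = 0$; together with the $\pi_1$-isomorphism this makes both inclusions homotopy equivalences.

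The one genuinely new point beyond Theorem \ref{t:iso1} — and the step I expect to be the main obstacle — is that, since $\pi_1(X)$ need not be trivial, one must upgrade ``$h$-cobordism'' to ``$s$-cobordism'', i.e. check that the Whitehead torsion $\tau(W, X-\nu\Sigma_0)$ vanishes in $\mathrm{Wh}(\pi_1(X))$. I would handle this with Milnor's sum theorem for Whitehead torsion applied to the splitting $X\times I = W\cup_{E}\nu Y$, where $E$ is the ``vertical'' part of $\partial\nu Y$, an $S^1$-bundle over $Y$. Since $\nu Y\cong\nu\Sigma_0\times I$ and $E\cong\partial\nu\Sigma_0\times I$ are product cobordisms (torsion $0$) and $X\times I$ is itself a product (torsion $0$), the sum formula collapses to $j_*\tau(W, X-\nu\Sigma_0) = 0$, where $j_*\colon\mathrm{Wh}(\pi_1 W)\to\mathrm{Wh}(\pi_1(X))$ is the isomorphism from the first paragraph; hence $\tau(W, X-\nu\Sigma_0)=0$. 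The care here goes into verifying that each relative pair appearing satisfies the hypotheses of the sum theorem, but every inclusion in question has already been arranged to be a homotopy equivalence.

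With $W$ an $s$-cobordism over the good group $\pi_1(X)$, the Freedman--Quinn $5$-dimensional $s$-cobordism theorem \cite{FQ} supplies a homeomorphism $W\cong(X-\nu\Sigma_0)\times I$, which can be taken to agree with the product structure on the already-product side $E$. Restricting to $\partial_+W$ gives a homeomorphism $X-\nu\Sigma_0\to X-\nu\Sigma_1$ carrying $\partial\nu\Sigma_0$ to $\partial\nu\Sigma_1$ compatibly with the normal circle-bundle structures; since the concordance identifies $\nu\Sigma_0\cong\nu\Sigma_1$ as $D^2$-bundles over $\Sigma$, this extends across the tubular neighborhoods to the desired homeomorphism of pairs $(X,\Sigma_0)\to(X,\Sigma_1)$. (Unlike in Theorems \ref{t:iso1}--\ref{t:iso3}, no appeal to Quinn's isotopy theorem is needed, since only a homeomorphism of pairs is asserted.)
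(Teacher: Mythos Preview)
Your proposal is correct and follows essentially the same route as the paper's own proof: invoke Theorem \ref{t:pi1conc} to obtain a concordance, use Mayer--Vietoris in the universal cover to see that the complement is an $h$-cobordism, appeal to the additivity (sum formula) of Whitehead torsion to upgrade this to an $s$-cobordism, and finish with the Freedman--Quinn $s$-cobordism theorem for good groups. Your write-up simply spells out in detail the steps that the paper compresses into three sentences.
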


\begin{proof}
Theorem \ref{t:pi1conc} shows that such surfaces must be concordant. The complement of the concordance must be an h-cobordism by applying Meyer-Vietoris to its universal cover. Now, by the additivity of Whitehead torsion, this must actually be an s-cobordism, and therefore a product cobordism.
\end{proof}

\section{0-Concordance}\label{s:0concordance}

In his thesis \cite{M}, Paul Melvin defined the concept of 0-concordance of 2-knots in $S^4$, and showed that 0-concordant knots have equivalent Gluck twists. This is what we are generalizing in Theorem \ref{t:0concsurg}. The proof can be broken in to two steps. First show that surgeries along ribbon concordant surfaces are equivalent, and second show that a 0-concordance can be split into two ribbon concordances. We define a ribbon concordance from $\Sigma_0$ to $\Sigma_1$ to be a concordance $Y^3 \subset X\times I$ such that the regular level sets of $Y$ have only critical points of degree 0 and 1.

\begin{lemma}\label{l:0concis2rib} A 0-concordance  is diffeomorphic to the composition of two ribbon concordances.
\end{lemma}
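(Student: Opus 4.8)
\textbf{Proof plan for Lemma \ref{l:0concis2rib}.}

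The plan is to exhibit the required splitting by a Morse-theoretic argument on the height function $\pi: Y \to I$ coming from the concordance structure, rearranging handles so that all index-$0$ and index-$1$ critical points occur below the middle level and all index-$2$ and index-$3$ critical points occur above it. Concretely, after a small perturbation $\pi$ is a Morse function on the $3$-manifold $Y$ whose critical points have indices $0,1,2,3$; since the level sets of a $0$-concordance consist of a genus-$g$ surface together with some $S^2$'s, the index-$0$ and index-$3$ critical points are exactly the births and deaths of those spheres, and the index-$1$ and index-$2$ critical points are (respectively) the bands joining or splitting level-set components without changing the total genus. The first step is to order the critical points by index: by the standard handle-rearrangement lemma (cf. \cite{GS}), which works here because $Y$ is a cobordism rel boundary from $\Sigma_0$ to $\Sigma_1$, one can isotope $Y$ rel $\partial$ so that $\pi$ has no index-$i$ critical point above any index-$j$ critical point for $i > j$. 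Having done this, choose a regular value $c \in (0,1)$ separating the $\{0,1\}$-critical points from the $\{2,3\}$-critical points, and let $F = Y \cap (X\times\{c\})$ be the corresponding level set, a surface of genus $g$ together with a collection of spheres.

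The second step is to observe that $Y_{\le c} := Y \cap (X \times [0,c])$ is, by construction, a cobordism from $\Sigma_0 \times\{0\}$ to $F$ whose height function has only index-$0$ and index-$1$ critical points; this is exactly the definition of a ribbon concordance from $\Sigma_0$ to $F$. Dually, $Y_{\ge c} := Y \cap (X\times[c,1])$ has only index-$2$ and index-$3$ critical points; turning it upside down (replacing $\pi$ by $1-\pi$) converts these into index-$1$ and index-$0$ critical points, so $Y_{\ge c}$ read from the top is a ribbon concordance from $\Sigma_1$ to $F$. Thus $Y$ is the composition $Y_{\le c} \cup_F \overline{Y_{\ge c}}$ of two ribbon concordances (one of them read backwards), and the diffeomorphism asserted by the lemma is the identity of $Y$ rearranged by the isotopy above. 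One small point to handle: $F$ is not literally a single genus-$g$ surface but a genus-$g$ surface union spheres, whereas a ``ribbon concordance from $\Sigma_0$ to $\Sigma_1$'' as defined connects two surfaces of the same genus; this is cosmetic, since the extra spheres in $F$ can be absorbed by pushing the index-$0$ births of $Y_{\le c}$ that create them up past $c$ (or, equivalently, by noting that a $0$-concordance allows such spheres and the definition of ribbon concordance should be read in the same generality), so that after a final adjustment $F$ may be taken to be a genuine genus-$g$ surface.

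The main obstacle is the handle-rearrangement step: one must be sure that the reordering isotopy can be performed \emph{ambiently} in $X\times I$, not merely abstractly on $Y$. This is where one invokes that for a codimension-$2$ submanifold the dual handle decomposition of the complement $X\times I - \nu Y$ (as recalled in Section \ref{notation}) tracks the handles of $Y$, and that sliding an index-$i$ handle past an index-$j$ handle with $i>j$ is realized by an ambient isotopy because the relevant attaching and belt spheres can be made disjoint by general position in a $5$-dimensional ambient space (the attaching sphere of the lower handle has dimension $\le n-3$ and the belt sphere of the higher handle has complementary dimension, leaving enough room). Once ambient rearrangement is granted, the rest of the argument is bookkeeping.
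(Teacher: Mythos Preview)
The main gap is your assertion that the middle level $F$, after reordering by index, is ``a genus-$g$ surface together with a collection of spheres.'' The $0$-concordance hypothesis controls the level sets of the \emph{original} Morse function, not the reordered one. Your characterization of index-$1$ critical points as ``bands joining level-set components'' is correct for the original function, but reordering can destroy it: if in the original order a $2$-handle $\beta$ splits off an $S^2$ from some component and a $1$-handle $\alpha$ above it reattaches that $S^2$ to the component it came from, then after sliding $\alpha$ below $\beta$ both feet of $\alpha$ lie on a single component and $\alpha$ now increases the genus. Nothing in your argument rules out $F\cong\Sigma_{g'}$ with $g'>g$, and in that case neither $Y_{\le c}$ nor $Y_{\ge c}$ is a concordance at all, let alone a ribbon one. (Your ``cosmetic'' fix of pushing $0$-handles across $c$ does not address this, and would in any case put index-$0$ critical points into the top half, which upside down become index~$3$ and violate the ribbon condition there.)

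This missing step is precisely the content of the paper's proof. The paper uses the $0$-concordance hypothesis to show that $\partial_2=0$ in the Morse chain complex: if some $2$-handle contributed nontrivially to $\partial_2$, its attaching circle would have to be homologically essential in its (original) level set $\Sigma_g\sqcup kS^2$, so crossing that critical level would change the genus, contradicting $0$-concordance. Once $\partial_2=0$, acyclicity of the complex forces the $0$- and $1$-handles to cancel algebraically among themselves (and likewise the $2$- and $3$-handles), so in particular the numbers of $0$- and $1$-handles agree, $\chi(F)=\chi(\Sigma_g)$, and the middle level really is $\Sigma_g$. A smaller point: your final paragraph misplaces the relevant transversality. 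Reordering critical points of $\pi|_Y$ is a general-position argument in the $2$-dimensional level surfaces of $Y$, not in the ambient $5$-manifold; once that abstract reordering is done, the ambient realization is just an isotopy of $Y$ in the $I$-direction followed by isotopy extension.
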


\begin{proof} A 0-concordance $\Sigma \times I = Y \hookrightarrow X \times I$ has a handle decomposition induced by the projection $p:X\times I \rightarrow I$. Denote the level sets as $\Sigma_{\{x\}} = p^{-1}(x)\cap {Y}$
Since $H(Y, \partial_-Y) = 0$, all of the handles of $\Sigma\times I$ must cancel algebraically.\footnote{Of course they cancel geometrically after handleslides, stabilizations, and isotopies, but these cannot obviously be done ambiently in $X\times I$.} We remark that the 0-concordance condition implies that the regular level sets $\Sigma_{\{x\}}$ all look like a copy of $\Sigma$ and a collection of $S^2$'s.

We will first of all show that all of the 0-handles in this decomposition of $Y$ cancel with 1-handles algebraically, and similarly all of the 2-handles cancel with 3-handles algebraically. Suppose to the contrary that there is a 1-handle that is canceled algebraically by some 2-handles attached above it. This implies that there is a particular level set $\Sigma_{\{x\}}$ where the belt sphere of the 1-handle, $b_1$ and the attaching sphere for the 2-handle $a_2$ intersect. Since these curves must intersect algebraically non-trivially, they are both homologically non-trivial in $\Sigma_{\{x\}}$. However, this implies that $\Sigma_{\{x + \epsilon\}}$ is surgery on a homologically essential loop in $\Sigma_{\{x\}}$, that is the genus of this level set changes. This contradicts the fact that we began with a 0-cobordism.

The handles can now be re-ordered such that all of the algebraically canceling 1 and 2-handles are at a lower level than all of the 3 and 4-handles. In this way we divide the concordance into two ribbon concordances.

\end{proof}

\begin{theorem}\label{t:0conciso}
If there is a ribbon concordance $Y$ from $\Sigma_0$ to $\Sigma_1$ and $X - \Sigma_0$ is simply connected, then $(X,\Sigma_0)$ is diffeomorphic to $(X,\Sigma_1)$.
\end{theorem}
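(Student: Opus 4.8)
The plan is to simplify the ribbon concordance $Y$ one critical point at a time, absorbing each little $2$-sphere that appears along $Y$ back into the main surface by an \emph{ambient} isotopy of $X$; the hypothesis $\pi_1(X-\Sigma_0)=1$ is exactly what makes each absorption possible, and it will be seen to persist at every stage of the induction.

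First I would examine the regular level sets $\Sigma_{\{t\}}=Y\cap(X\times\{t\})$. Using the Morse function $p|_Y$, which has only index-$0$ and index-$1$ critical points, the same genus argument as in the proof of Lemma~\ref{l:0concis2rib} shows that no index-$1$ point may add a handle to a component: if it did, the genus of the level surface would change, and with no index-$2$ or index-$3$ critical points available it could never change back, contradicting $Y\cong\Sigma_0\times I$. Hence each index-$0$ point is the birth of a round $2$-sphere bounding a tiny $3$-ball, each index-$1$ point is a band fusion of two components, and each fusion must involve one of these little spheres. In particular, the part of $Y$ swept out by a given little sphere between its birth and the fusion that absorbs it contains no critical points of $p$, so it is an ambient isotopy of a tiny round sphere sitting in a ball disjoint from everything else.

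Then I would induct over the index-$1$ critical points of $p|_Y$ in order of their critical values, maintaining the hypothesis that the ``main'' component $\Sigma^{(j)}$ of the level surface just below the $(j+1)$-st fusion satisfies $\pi_1(X-\Sigma^{(j)})=1$ (the base case $j=0$ being $\Sigma^{(0)}=\Sigma_0$). At the $(j+1)$-st fusion a band $T$ is attached joining $\Sigma^{(j)}$ (or two of the little spheres) to a tiny round sphere $S$ in a ball $B$; its core is an embedded arc $\alpha$, which after a general-position push avoids the other little balls. Since $X$ is $4$-dimensional --- note $\pi_1(X-\Sigma^{(j)})$ surjects onto $\pi_1(X)$, so $X$ is automatically simply connected --- any two arcs with the same endpoints lying in $X-\Sigma^{(j)}$ are isotopic rel endpoints there once $\pi_1(X-\Sigma^{(j)})=1$, and the framing of a normal bundle over a contractible arc is unique; thus $T$ can be isotoped to a standard short tube. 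Now $S$ slides down this short tube and is absorbed into $\Sigma^{(j)}$ by an ambient isotopy of $X$, after which the new main surface is diffeomorphic to $\Sigma^{(j)}$ \emph{and isotopic to it in $X$}, so its complement is again simply connected and the induction continues. After all the fusions every little sphere has been absorbed, so the final surface is $\Sigma_1$ and the composite of the isotopies is a diffeomorphism of $X$ taking $\Sigma_0$ to $\Sigma_1$, i.e. $(X,\Sigma_0)\cong(X,\Sigma_1)$.

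The step I expect to be the crux --- and the only place the hypothesis is genuinely used --- is the tube-straightening: isotoping each fusion band to a trivial short tube rel the current surface. This is precisely where a nontrivial loop in $X-\Sigma^{(j)}$ would obstruct the argument (compare the classical situation, where the analogous bands for a ribbon knot in $S^3$ are genuinely knotted, which is why ribbon knots need not be unknots); the role of $\pi_1(X-\Sigma_0)=1$ is to supply the needed triviality at the start and --- since absorbing a trivial sphere along a straightened tube does not change the isotopy type of the surface --- to propagate it through all the steps. The remaining care is bookkeeping: keeping the straightening isotopies disjoint from the not-yet-absorbed little spheres, which is routine by general position since those spheres live in small balls and the objects being moved are arcs.
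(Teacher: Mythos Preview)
Your approach is essentially correct and takes a genuinely different route from the paper. The paper works on the \emph{complement}: the dual decomposition on $X\times I-\nu Y$ has one $1$-handle per index-$0$ point of $p|_Y$ and one $2$-handle per index-$1$ point; after handleslides each $2$-handle is attached along a band sum of a meridian of some little $S_i$ with a meridian of $\Sigma_0$, and $\pi_1(X-\Sigma_0)=1$ lets one drag the $\Sigma_0$-end off, leaving a pure $S_i$-meridian which then geometrically cancels the matching $1$-handle. Your ``straighten the fusion arc using $\pi_1=1$, then slide the sphere back down the tube'' is exactly the surface-side dual of this cancellation. The paper's formulation has the practical advantage that the identical handle-cancellation proves Theorem~\ref{t:0concsurg} after gluing $M\times I$ along the boundary, whereas your ambient-isotopy version does not obviously adapt to the surgered situation; conversely, your version makes it transparent that the conclusion is actually an ambient isotopy carrying $\Sigma_0$ to $\Sigma_1$, not merely a pair-diffeomorphism.

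One step needs more care. Your parenthetical ``(or two of the little spheres)'' hides a real issue: once two little spheres have been fused, the resulting sphere need no longer bound a $3$-ball disjoint from $\Sigma^{(j)}$, so when it is eventually tubed to the main surface your absorption step (which relied on $S$ sitting in a ball) no longer applies as stated. The clean fix is to reorder the handles of $Y$ at the outset so that every $0$-handle is immediately followed by the $1$-handle attaching its sphere to the growing main component --- i.e.\ process the fusion tree root-first; this is precisely the dual of the paper's preliminary move ``after some handleslides, each $2$-handle connects a single $S^2$ with $\Sigma_0$.'' After that reordering there is only ever one extra sphere present at a time, it is visibly in a ball, and your straightening-and-absorption goes through with no further bookkeeping about the ``other little balls.''
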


\begin{proof}

The strategy will be to construct a handlebody decomposition for $X\times I - \nu Y$, and then show that all of the handles cancel. We have seen in Section \ref{notation} how to construct a dual handlebody decomposition on $X\times I -Y$ from the induced handle decomposition on $Y$. In the case of a ribbon concordance, this will have only 1- and 2-handles attached to $(X- \nu\Sigma_0) \times I$ as follows: There is no choice for how to attach the 1-handles. Notice that  the resulting 4-manifold after the 1-handles are attached,  $X_+ = \partial_+((X- \nu\Sigma_0) \times I + \sum h^1_i)$ is $X - (\Sigma_0 \cup \bigcup_i S_i)$, where the $S_i$ are a collection of unknotted $S^2$'s in $X$. Moreover, the meridians to the $S_i$ run over the 1-handles as in Figure \ref{f:meridianis1handle}. The 2-handles are attached, as explained in Section \ref{notation}, to $X_+$ along band sums of the meridians to the surfaces. After some handleslides, we can assume that each 2-handle connects a single $S^2$ with $\Sigma_0$. (See Figure \ref{f:canceling1handles}a).

	\begin{figure}
\labellist
\small\hair 2pt
\pinlabel $S_i$ at 273 55
\pinlabel $X_+$ at 414 54
\pinlabel $\Sigma_0$ at 409 88
\pinlabel $\partial_+$ at -7 65
\pinlabel $=$ at 197 61
\endlabellist	
\includegraphics{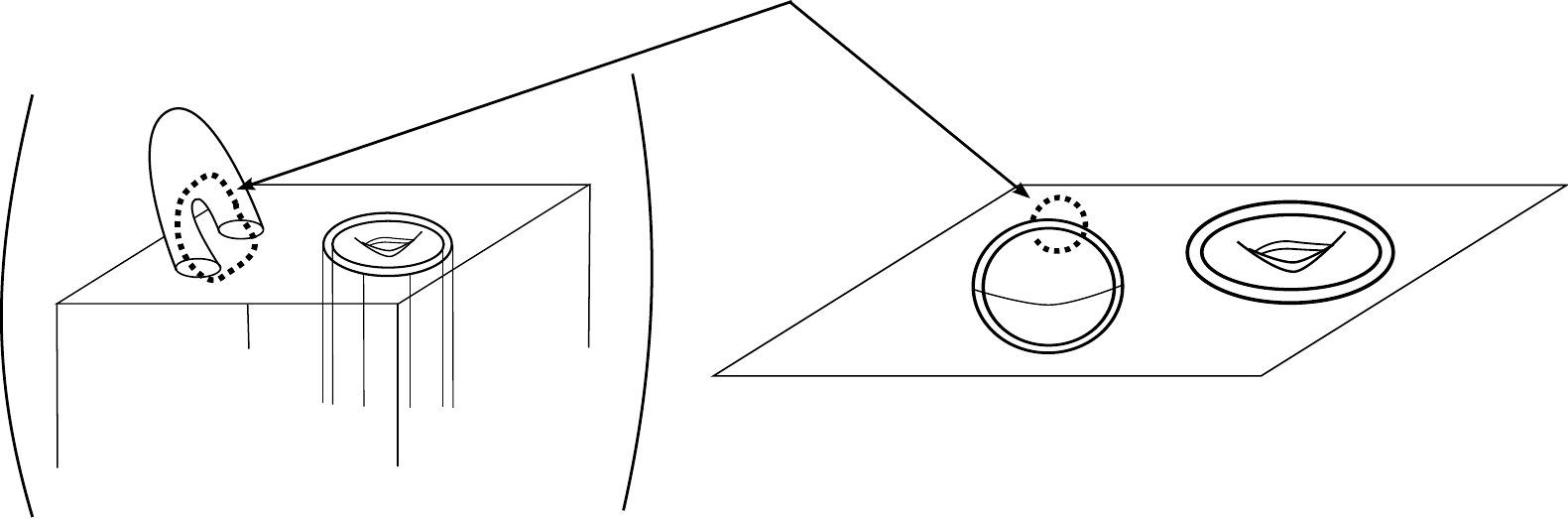}
	\caption{The meridians to the $S_i$ span the 1-handles.}
	\label{f:meridianis1handle}
	\end{figure} 

Since the meridian to $\Sigma_0$ is null homotopic in $X - \Sigma_0$, these 2-handles can be pulled back, as in Figures \ref{f:canceling1handles}b and \ref{f:canceling1handles}c, to meridians of the $S^2$'s. But these meridians correspond to loops transversing the 1-handles (recall Figure \ref{f:meridianis1handle}). Therefore the 1-handles, cancel with the 2-handles.






	\begin{figure}
\labellist
\small\hair 2pt
\pinlabel (a) at 14 224
\pinlabel (b) at 14 120
\pinlabel (c) at 14 32
\endlabellist	
\includegraphics{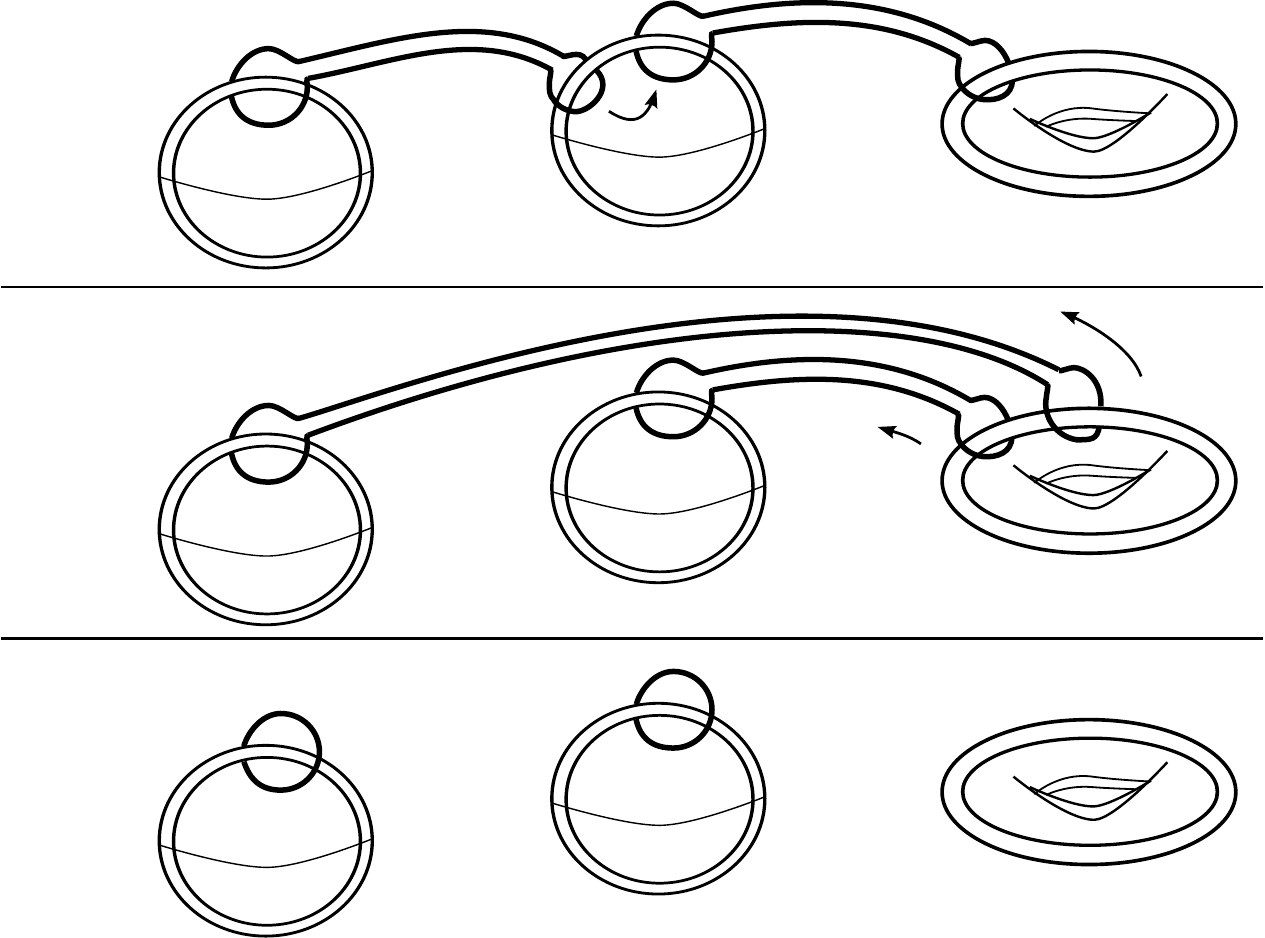}
	\caption{How the 2-handles are attached to $X_+$}
	\label{f:canceling1handles}
	\end{figure}

\end{proof}

This theorem should also be true in the case that we only assume $X - \Sigma_1$ is simply connected, but we are not quite able to prove this. If the Kervaire-Laudenbach conjecture is true, however, a handle argument shows that if $X-\Sigma_1$ is simply connected then $X- \Sigma_0$ is simply connected, so the theorem would follow as before. See \cite{Howie} for a nice survey of the Kervaire-Laudenbach conjecture. An affirmative answer to this conjecture would similarly simplify the statement of following theorem.

\begin{theorem}\label{t:0concsurg}

If (surgery$\times I$) along a 0-concordance in $X\times I$ results in a cobordism between simply connected 4-manifolds, then the cobordism is trivial, and the manifolds are diffeomorphic. For a ribbon concordance between $\Sigma_0$ and $\Sigma_1$, it is only necessary to assume that surgery on $\Sigma_0$ results in a simply connected manifold for the cobordism to be trivial. 

\end{theorem}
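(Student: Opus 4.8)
The plan is to reduce the $0$-concordance case to the ribbon case via Lemma~\ref{l:0concis2rib}, and then to prove the ribbon case by running, almost verbatim, the handle-cancellation argument of Theorem~\ref{t:0conciso} inside the surgered $5$-manifold. Set up notation first: surgery along $\Sigma_0$ means $X'_0 = (X-\nu\Sigma_0)\cup_f M$ for a $4$-manifold $M$ and a diffeomorphism $f\colon \partial M\to\partial\nu\Sigma_0$. Since a concordance $Y$ has $\nu Y\cong Y\times D^2 \cong \Sigma\times I\times D^2$, all the $\partial\nu\Sigma_t$ are canonically identified, so the same datum $(M,f)$ can be used at every level, and by definition (surgery$\times I$) along $Y$ is $W = (X\times I - \nu Y)\cup_{f\times\mathrm{id}} (M\times I)$, a cobordism from $X'_0$ to $X'_1$.

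Now suppose $Y$ is a ribbon concordance and $X'_0$ is simply connected. By the proof of Theorem~\ref{t:0conciso}, $X\times I-\nu Y$ has a relative handle decomposition $(X-\nu\Sigma_0)\times I + \sum h^1_i + \sum h^2_j$, with these handles attached in the interior of $(X-\nu\Sigma_0)\times\{1\}$, disjoint from the vertical boundary $\partial_v\nu Y$ to which $M\times I$ is glued. Since $(X-\nu\Sigma_0)\times I \cup_{f\times\mathrm{id}} M\times I = X'_0\times I$, we get $W = X'_0\times I + \sum h^1_i + \sum h^2_j$. Attaching the $1$-handles to $X'_0\times\{1\}$ produces $X'_0$ with a collection of unknotted $2$-spheres $S_i$ removed (lying in the $X-\nu\Sigma_0$ part), whose meridians transverse the $1$-handles; the $2$-handles are attached, as in Section~\ref{notation}, along band sums of meridians of the $S_i$ with the meridian $\mu$ of $\Sigma_0$, which in $X'_0$ lies inside $M$. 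Because $X'_0$ is simply connected, $\mu$ is null-homotopic in $X'_0$, and $W$ is simply connected; so, exactly as in Figure~\ref{f:canceling1handles}, each $2$-handle's attaching circle can be isotoped off $\mu$ to a meridian of a single $S_i$, the isotopy being realized by an embedded disk for the null-homotopy of $\mu$ inside the simply connected $5$-manifold $W$. The $2$-handles then cancel the $1$-handles in pairs (the counts match because $Y\cong\Sigma\times I$ forces $H_*(Y,\Sigma_0)=0$), so $W = X'_0\times I$ is a trivial cobordism and $X'_0\cong X'_1$. This proves the ribbon statement of the theorem.

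For the $0$-concordance statement, apply Lemma~\ref{l:0concis2rib} to write $Y = Y_1\cup_{\Sigma_{1/2}} Y_2$, with $Y_1$ a ribbon concordance from $\Sigma_0$ to $\Sigma_{1/2}$ and $Y_2$, read upside down, a ribbon concordance from $\Sigma_1$ to $\Sigma_{1/2}$. Then $W = W_1\cup_{X'_{1/2}} W_2$, where $W_i$ is (surgery$\times I$) along $Y_i$. Applying the ribbon case to $W_1$ (using $\pi_1(X'_0)=0$) and to $W_2$ turned upside down (using $\pi_1(X'_1)=0$) shows each $W_i$ is a product cobordism; hence $W$ is a product cobordism and $X'_0\cong X'_1$.

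The main obstacle is the cancellation step in the ribbon case: one must check that performing the surgery does not disturb the geometric picture underlying Theorem~\ref{t:0conciso} — that the unknotted spheres $S_i$ and the dual $1$- and $2$-handles all live in the $X-\nu\Sigma_0$ part and are untouched, while the only change is that the meridian $\mu$ of $\Sigma_0$ now sits inside $M$ — and that $\pi_1(X'_0)=0$ is precisely the hypothesis needed both to pull the $2$-handles off $\mu$ and to guarantee the ambient isotopies exist in the simply connected cobordism $W$. The remainder is bookkeeping about which faces of $\partial(X\times I-\nu Y)$ the handles and the glued-in copy of $M\times I$ meet.
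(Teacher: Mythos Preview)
Your proposal is correct and follows essentially the same approach as the paper: reduce the $0$-concordance case to the ribbon case via Lemma~\ref{l:0concis2rib}, glue $M\times I$ onto the relative handle decomposition of $X\times I-\nu Y$ coming from Theorem~\ref{t:0conciso}, and use the simple-connectedness of $X'_0$ (in place of $X-\Sigma_0$) to make the meridian $\mu$ null-homotopic so that the dual $1$- and $2$-handles cancel. Your write-up is slightly more explicit than the paper's about where the glued $M\times I$ sits relative to the handles and about why the numbers of $1$- and $2$-handles agree, but the argument is the same.
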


\begin{proof}
Let surgery along $\Sigma_0 \subset X$ be given by $X'= (X - \nu \Sigma_0) \cup_f M$ for some 4-manifold $M$ and a diffeomorphism $f:\partial M \longrightarrow \partial \nu \Sigma_0$

First consider the case of a ribbon concordance. The strategy will be to show that all of the handles in the surgered cobordism cancel.

As in the previous proof, we can construct a handlebody decomposition of $X\times I - \nu Y$ as 

\[X\times I -\nu Y = (X- \nu \Sigma_0) \times I + \sum h_1 + \sum h_2. \]

Surgery along the concordance corresponds to attaching $M\times I$ to the $(X- \nu \Sigma_0)\times I$ part via the map $f\times id$. Now the proof proceeds exactly as in the previous lemma, by performing handleslides, then pulling back the 2-handles along the bands (which is possible in this case because the meridians to $\Sigma_0$ are null homotopic in $X-\nu\Sigma_0 \cup_f M$), and then seeing that the 1- and 2-handles of the cobordism must cancel.

A general 0-concordance can be split into two ribbon concordancecs by Lemma \ref{l:0concis2rib}, so the theorem follows as in the ribbon case once we assume that surgery on both $\Sigma_0$ and $\Sigma_1$ gives simply connected manifolds.

\end{proof}

\begin{remark}Notice that while we have stated this theorem in terms of embedded surfaces, the same applies to configurations of embedded surfaces. That is, it is relatively straightforward to define 0-concordance for configurations of surfaces, and then to carry out exactly the proof above to show that 0-concordant rational-blowdowns must be equivalent.
\end{remark}

\section*{Acknowledgments}
The author wishes to thank Paul Melvin for providing a copy of \cite{M}, Danny Ruberman, Ian Hambleton, Andras Stipsicz, Rob Schneiderman, Adam Knapp, Bob Bell, Jim Howie, and Peter Teichner for help at various points, Ron Fintushel for helpful comments on an early draft of this paper, and SUNY-Stony Brook and the Max Planck Institute-Bonn where this research was carried out.

\begin{biblist}

\bib{AY}{article}{
author = {Akbulut, Selman},
author = {Yasui, Kouichi},
title = {Gluck twisting 4-manifolds with odd intersection form},
journal = {ArXiv e-prints},
eprint = {http://arxiv.org/abs/1205.6038},
year = {2012},
}

\bib{Afake}{article}{
   author={Akbulut, Selman},
   title={Constructing a fake $4$-manifold by Gluck construction to a
   standard $4$-manifold},
   journal={Topology},
   volume={27},
   date={1988},
   number={2},
   pages={239--243},
}

\bib{AkCS}{article}{
   author={Akbulut, Selman},
   title={Cappell-Shaneson homotopy spheres are standard},
   journal={Ann. of Math. (2)},
   volume={171},
   date={2010},
   number={3},
   pages={2171--2175},
}

\bib{BSlog}{article}{
 author={Baykur, R. Inanc},  
   author={Sunukjian, Nathan},
  
   title={Round handles, logarithmic transforms and smooth 4-manifolds},
   journal={J. Topology},
   volume={6},
   date={2013},
   number={1},
   pages={49--63},
  
}

\bib{BS}{article}{
   author={Blanl{\oe}il, Vincent},
   author={Saeki, Osamu},
   title={Cobordisme des surfaces plong\'ees dans $S^4$},
   language={French, with English summary},
   journal={Osaka J. Math.},
   volume={42},
   date={2005},
   number={4},
   pages={751--765},
}

\bib{CS}{article}{
   author={Cappell, Sylvain E.},
   author={Shaneson, Julius L.},
   title={On four dimensional surgery and applications},
   journal={Comment. Math. Helv.},
   volume={46},
   date={1971},
   pages={500--528},
}

\bib{Coc}{article}{
   author={Cochran, Tim},
   title={Ribbon knots in $S^{4}$},
   journal={J. London Math. Soc. (2)},
   volume={28},
   date={1983},
   number={3},
   pages={563--576},
}

\bib{Howie}{article}{
   author={Duncan, Andrew J.},
   author={Howie, James},
   title={The $3$-torus is Kervaire},
   conference={
      title={Geometric topology},
      address={Haifa},
      date={1992},
   },
   book={
      series={Contemp. Math.},
      volume={164},
      publisher={Amer. Math. Soc.},
      place={Providence, RI},
   },
   date={1994},
   pages={1--8},
}

\bib{FSsurf}{article}{
   author={Fintushel, Ronald},
   author={Stern, Ronald J.},
   title={Surfaces in $4$-manifolds},
   journal={Math. Res. Lett.},
   volume={4},
   date={1997},
   number={6},
   pages={907--914},
}

\bib{FQ}{book}{
   author={Freedman, Michael H.},
   author={Quinn, Frank},
   title={Topology of 4-manifolds},
   series={Princeton Mathematical Series},
   volume={39},
   publisher={Princeton University Press},
   place={Princeton, NJ},
   date={1990},
   pages={viii+259},
}

\bib{Gl}{article}{
   author={Gluck, Herman},
   title={The embedding of two-spheres in the four-sphere},
   journal={Trans. Amer. Math. Soc.},
   volume={104},
   date={1962},
   pages={308--333},
}

\bib{GCS1}{article}{
   author={Gompf, Robert E.},
   title={More Cappell-Shaneson spheres are standard},
   journal={Algebr. Geom. Topol.},
   volume={10},
   date={2010},
   number={3},
   pages={1665--1681},
}

\bib{GCS2}{article}{
   author={Gompf, Robert E.},
   title={On Cappell-Shaneson $4$-spheres},
   journal={Topology Appl.},
   volume={38},
   date={1991},
   number={2},
   pages={123--136},
}

\bib{GCS3}{article}{
   author={Gompf, Robert E.},
   title={Killing the Akbulut-Kirby $4$-sphere, with relevance to the
   Andrews-Curtis and Schoenflies problems},
   journal={Topology},
   volume={30},
   date={1991},
   number={1},
   pages={97--115},
}

	\bib{GS}{book}{
   author={Gompf, Robert E.},
   author={Stipsicz, Andr{\'a}s I.},
   title={$4$-manifolds and Kirby calculus},
   series={Graduate Studies in Mathematics},
   volume={20},
   publisher={American Mathematical Society},
   place={Providence, RI},
   date={1999},
   pages={xvi+558},
}

\bib{G}{article}{
   author={Gordon, C. McA.},
   title={Knots in the $4$-sphere},
   journal={Comment. Math. Helv.},
   volume={51},
   date={1976},
   number={4},
   pages={585--596},
}

\bib{GlBook}{article}{
   author={Habiro, Kazuo},
   author={Marumoto, Yoshihiko},
   author={Yamada, Yuichi},
   title={Gluck surgery and framed links in 4-manifolds},
   conference={
      title={Knots in Hellas '98 (Delphi)},
   },
   book={
      series={Ser. Knots Everything},
     volume={24},
      publisher={World Sci. Publ., River Edge, NJ},
   },
   date={2000},
   pages={80--93},
}

	\bib{HK}{article}{
   author={Hambleton, Ian},
   author={Kreck, Matthias},
   title={Cancellation of hyperbolic forms and topological four-manifolds},
   journal={J. Reine Angew. Math.},
   volume={443},
   date={1993},
   pages={21--47},
}

\bib{HT}{article}{
   author={Hambleton, Ian},
   author={Teichner, Peter},
   title={A non-extended Hermitian form over $\bold Z[\bold Z]$},
   journal={Manuscripta Math.},
   volume={93},
   date={1997},
   number={4},
   pages={435--442},
   issn={0025-2611},
}
	
\bib{Hir}{article}{
   author={Hirsch, Morris W.},
   title={The imbedding of bounding manifolds in euclidean space},
   journal={Ann. of Math. (2)},
   volume={74},
   date={1961},
   pages={494--497},
}

\bib{Iw}{article}{
   author={Iwase, Zju{\~n}ici},
   title={Dehn surgery along a torus $T^2$-knot. II},
   journal={Japan. J. Math. (N.S.)},
   volume={16},
   date={1990},
   number={2},
   pages={171--196},
}

\bib{Ker}{article}{
   author={Kervaire, Michel A.},
   title={Les n\oe uds de dimensions sup\'erieures},
   language={French},
   journal={Bull. Soc. Math. France},
   volume={93},
   date={1965},
   pages={225--271},
}

\bib{RK1}{article}{
   author={Kim, Hee Jung},
   author={Ruberman, Daniel},
   title={Topological triviality of smoothly knotted surfaces in
   4-manifolds},
   journal={Trans. Amer. Math. Soc.},
   volume={360},
   date={2008},
   number={11},
   pages={5869--5881},
}

\bib{Kirbybook}{book}{
   author={Kirby, Robion C.},
   title={The topology of $4$-manifolds},
   series={Lecture Notes in Mathematics},
   volume={1374},
   publisher={Springer-Verlag},
   place={Berlin},
   date={1989},
   pages={vi+108},
}

\bib{KM}{article}{
   author={Kronheimer, P. B.},
   author={Mrowka, T. S.},
   title={The genus of embedded surfaces in the projective plane},
   journal={Math. Res. Lett.},
   volume={1},
   date={1994},
   number={6},
   pages={797--808},
}

\bib{LWsph}{article}{
   author={Lee, Ronnie},
   author={Wilczy{\'n}ski, Dariusz M.},
   title={Locally flat $2$-spheres in simply connected $4$-manifolds},
   journal={Comment. Math. Helv.},
   volume={65},
   date={1990},
   number={3},
   pages={388--412},
}

\bib{LWsurf}{article}{
   author={Lee, Ronnie},
   author={Wilczy{\'n}ski, Dariusz M.},
   title={Representing homology classes by locally flat surfaces of minimum
   genus},
   journal={Amer. J. Math.},
   volume={119},
   date={1997},
   number={5},
   pages={1119--1137},
}

\bib{M}{book}{
   author={Melvin, Paul},
   title={Blowing up and down in 4-manifolds},
   note={Thesis (Ph.D.)--Berkeley},
   date={1977},
   pages={69},
}

\bib{NS}{article}{
   author={Nash, Daniel},
   author={Stipsicz, Andras},
   title={Gluck twist on a certain family of 2-knots},
   journal={Michigan Math. J.},
   volume={61},
   number ={4},
   date={2012},
   pages={703--713},
 
}

\bib{pao}{article}{
   author={Pao, Peter Sie},
   title={Nonlinear circle actions on the $4$-sphere and twisting spun
   knots},
   journal={Topology},
   volume={17},
   date={1978},
   number={3},
   pages={291--296},
}

\bib{Plotsuc}{article}{
   author={Plotnick, Steven P.},
   author={Suciu, Alexander I.},
   title={$k$-invariants of knotted $2$-spheres},
   journal={Comment. Math. Helv.},
   volume={60},
   date={1985},
   number={1},
   pages={54--84},
}

\bib{Quinn}{article}{
   author={Quinn, Frank},
   title={Isotopy of $4$-manifolds},
   journal={J. Differential Geom.},
   volume={24},
   date={1986},
   number={3},
   pages={343--372},
}

\bib{R}{article}{
   author={Rohlin, V. A.},
   title={The embedding of non-orientable three-manifolds into
   five-dimensional Euclidean space},
   language={Russian},
   journal={Dokl. Akad. Nauk SSSR},
   volume={160},
   date={1965},
   pages={549--551},
}

\bib{Roh}{article}{
   author={Rohlin, V. A.},
   title={Two-dimensional submanifolds of four-dimensional manifolds},

   journal={Functional Analysis and Its Applications},
   volume={5},
   date={1971},
   number={1},
   pages={39--48},
}

\bib{Rub}{article}{
   author={Ruberman, Daniel},
   title={Doubly slice knots and the Casson-Gordon invariants},
   journal={Trans. Amer. Math. Soc.},
   volume={279},
   date={1983},
   number={2},
   pages={569--588},
}

\bib{Scorpbook}{book}{
   author={Scorpan, Alexandru},
   title={The wild world of 4-manifolds},
   publisher={American Mathematical Society},
   place={Providence, RI},
   date={2005},
   pages={xx+609},
}

\bib{suc}{article}{
   author={Suciu, Alexander I.},
   title={Infinitely many ribbon knots with the same fundamental group},
   journal={Math. Proc. Cambridge Philos. Soc.},
   volume={98},
   date={1985},
   number={3},
   pages={481--492},
   issn={0305-0041},
}

\bib{W}{article}{
   author={Wall, C. T. C.},
   title={On simply-connected $4$-manifolds},
   journal={J. London Math. Soc.},
   volume={39},
   date={1964},
   pages={141--149},
}

\bib{Wallemb}{article}{
   author={Wall, C. T. C.},
   title={All $3$-manifolds imbed in $5$-space},
   journal={Bull. Amer. Math. Soc.},
   volume={71},
   date={1965},
   pages={564--567},
}

\bib{Wallsph}{article}{
   author={Wall, C. T. C.},
   title={Diffeomorphisms of $4$-manifolds},
   journal={J. London Math. Soc.},
   volume={39},
   date={1964},
   pages={131--140},
}
	
\bib{Yaj1}{article}{
   author={Yajima, Takeshi},
   title={On simply knotted spheres in $R^{4}$},
   journal={Osaka J. Math.},
   volume={1},
   date={1964},
   pages={133--152},
}

\bib{Yaj2}{article}{
   author={Yajima, Takeshi},
   title={On a characterization of knot groups of some spheres in $R^{4}$},
   journal={Osaka J. Math.},
   volume={6},
   date={1969},
   pages={435--446},
   issn={0030-6126},
   review={\MR{0259893 (41 \#4522)}},
}

\end{biblist}

\end{document}